\newtheorem{Def}{Definition}
\newtheorem{Assumption}{Assumption}
\newtheorem{Thm}{Theorem}
\newtheorem{Fact}{Fact}
\newtheorem{Lem}{Lemma}
\newtheorem{Cor}{Corollary}
\DeclareMathOperator*{\argmin}{argmin}
\newcommand*{\tran}{^{\mkern-1.5mu\mathsf{T}}}
\title{Online Convex Optimization with Stochastic Constraints}
\author{Hao Yu, Michael J. Neely, Xiaohan Wei\\Department of Electrical Engineering \\ 
University of Southern California}
\begin{document}
\maketitle

\begin{abstract}
This paper considers online convex optimization  (OCO) with stochastic constraints, which generalizes Zinkevich's OCO over a  known simple fixed set by introducing multiple stochastic functional constraints that are i.i.d. generated at each round and are disclosed to the decision maker only after the decision is made. This formulation arises naturally when decisions are restricted by stochastic environments or deterministic environments with noisy observations. It also includes many important problems  as special cases, such as OCO with long term constraints, stochastic constrained convex optimization, and deterministic constrained convex optimization.  To solve this problem, this paper proposes a new algorithm that achieves $O(\sqrt{T})$ expected regret and constraint violations and $O(\sqrt{T}\log(T))$ high probability regret and constraint violations. Experiments on a real-world data center scheduling problem further verify the performance of the new algorithm.
\end{abstract}


\section{Introduction}

Online convex optimization (OCO) is a multi-round learning process with arbitrarily-varying convex loss functions where the decision maker has to choose decision $x(t)\in \mathcal{X}$ before observing the corresponding loss function $f^t(\cdot)$. For a fixed time horizon $T$,  define the \emph{regret} of a learning algorithm with respect to the best fixed decision in hindsight (with full knowledge of all loss functions) as
\begin{align*}
\text{regret}(T) = \sum_{t=1}^{T} f^t(\mathbf{x}(t))  - \min_{\mathbf{x}\in \mathcal{X}} \sum_{t=1}^T f^t(\mathbf{x}).
\end{align*}
The goal of OCO is to develop dynamic learning algorithms such that regret grows sub-linearly with respect to $T$.  The setting of OCO is introduced in a series of work \cite{Cesa-Bianchi96TNN, Kivinen97,Gordon99COLT,Zinkevich03ICML} and is formalized in \cite{Zinkevich03ICML}. OCO has gained considerable amount of research interest recently with various applications such as online regression, prediction with expert advice, online ranking, online shortest paths and portfolio selection.  See  \cite{Shalev-Shwartz11FoundationTrends,Hazan16FoundationTrends} for more applications and backgrounds. 

In \cite{Zinkevich03ICML}, Zinkevich shows that using an online gradient descent (OGD) update given by 
\begin{align}
\mathbf{x}(t+1) = \mathcal{P}_{\mathcal{X}} \big[ \mathbf{x}(t) - \gamma \nabla f^t (\mathbf{x}(t))\big] \label{eq:Zinkevich}
\end{align}
where $ \nabla f^t (\cdot)$ is a subgradient of $f^t(\cdot)$ and $\mathcal{P}_{\mathcal{X}}[\cdot]$ is the projection onto set $\mathcal{X}$ can achieve $O(\sqrt{T})$ regret. Hazan et al. in \cite{Hazan07ML} show that better regret is possible under the assumption that each loss function is strongly convex but $O(\sqrt{T})$ is the best possible if no additional assumption is imposed.

It is obvious that Zinkevich's OGD in \eqref{eq:Zinkevich} requires the full knowledge of set $\mathcal{X}$ and low complexity of the projection $\mathcal{P}_\mathcal{X}[\cdot]$.  However, in practice, the constraint set $\mathcal{X}$, which is often described by many functional inequality constraints, can be time varying and may not be fully disclosed to the decision maker.  In \cite{Mannor09JMLR}, Mannor et al. extend OCO by considering time-varying constraint functions $g^t(\mathbf{x})$ which can arbitrarily vary and are only disclosed to us after each $\mathbf{x}(t)$ is chosen.  In this setting, Mannor et al. in \cite{Mannor09JMLR} explore the possibility of designing learning algorithms such that regret grows sub-linearly and $\limsup_{T\rightarrow\infty} \frac{1}{T}\sum_{t=1}^T g^t(\mathbf{x}(t)) \leq 0$, i.e., the (cumulative) constraint violation $\sum_{t=1}^T g^t(\mathbf{x}(t))$ also grows sub-linearly.   Unfortunately, Mannor et al. in \cite{Mannor09JMLR} prove that this is impossible even when both $f^t(\cdot)$ and $g^t(\cdot)$ are simple linear functions.

Given the impossibility results shown by Mannor et al. in \cite{Mannor09JMLR},  this paper considers OCO where constraint functions $g^t(\mathbf{x})$ are not arbitrarily varying but independently and identically distributed (i.i.d.) generated from an unknown probability model. More specifically, this paper considers \emph{online convex optimization (OCO) with stochastic constraint} $\mathcal{X}=\{\mathbf{x}\in \mathcal{X}_0:  \mathbb{E}_\omega[g_k(\mathbf{x}; \omega)] \leq 0, k\in\{1,2,\ldots,m\}\}$ where $\mathcal{X}_0$ is a known fixed set; the expressions of stochastic constraints $\mathbb{E}_\omega[g_k(\mathbf{x}; \omega)]$ (involving expectations with respect to $\omega$ from an unknown distribution) are unknown; and subscripts $k\in\{1,2,\ldots, m\}$ indicate the possibility of multiple functional constraints.  In OCO with stochastic constraints, the decision maker receives loss function $f^t(\mathbf{x})$ and i.i.d. constraint function realizations  $g_k^t (\mathbf{x}) \overset{\Delta}{=}g_k(\mathbf{x}; \omega(t))$ at each round $t$. However, the expressions of $g_k^t (\cdot)$ and $f^t(\cdot)$ are disclosed to the decision maker only after decision $\mathbf{x}(t)\in \mathcal{X}_0$ is chosen. This setting arises naturally when decisions are restricted by stochastic environments or deterministic environments with noisy observations. For example, if we consider online routing (with link capacity constraints) in wireless networks \cite{Mannor09JMLR}, each link capacity is not a fixed constant (as in wireline networks) but an i.i.d. random variable since wireless channels are stochastically time-varying by nature \cite{book_FundamentalWireless}. OCO with stochastic constraints also covers important special cases such as OCO with long term constraints \cite{Mahdavi12JMLR,Cotter15COLT, Jenatton16ICML}, stochastic constrained convex optimization \cite{Mahdavi13NIPS} and deterministic constrained convex optimization \cite{book_ConvexOpt_Nesterov}.

Let $\mathbf{x}^{\ast} = \argmin_{\{\mathbf{x}\in\mathcal{X}_0:  \mathbb{E}[g_k(\mathbf{x}; \omega)]\leq 0, \forall k\in \{1,2,\ldots,m\}\}} \sum_{t=1}^{T}f^{t}(\mathbf{x})$ be the best fixed decision in hindsight (knowing all loss functions $f^{t}(\mathbf{x})$ and the distribution of stochastic constraint functions $g_k(\mathbf{x}; \omega)$). Thus, $\mathbf{x}^\ast$ minimizes the $T$-round cumulative loss and satisfies all stochastic constraints in expectation, which also implies $\limsup_{T\rightarrow\infty} \frac{1}{T}\sum_{t=1}^T g_k^t(\mathbf{x}^\ast) \leq 0$ almost surely by the strong law of large numbers.  Our goal is to develop dynamic learning algorithms that guarantee both regret $\sum_{t=1}^{T} f^t(\mathbf{x}(t))  - \sum_{t=1}^T f^t(\mathbf{x}^\ast)$ and constraint violations $\sum_{t=1}^T g_k^t(\mathbf{x}(t))$ grow sub-linearly.  

Note that Zinkevich's algorithm in \eqref{eq:Zinkevich} is not applicable to OCO with stochastic constraints since $\mathcal{X}$ is unknown and it can happen that $\mathcal{X}(t)=\{\mathbf{x}\in \mathcal{X}_0: g_k (\mathbf{x}; \omega (t))\leq 0, \forall k\in\{1,2,\ldots, m\}\} = \emptyset$ for certain realizations $\omega(t)$, such that projections $\mathcal{P}_{\mathcal{X}}[\cdot]$ or $\mathcal{P}_{\mathcal{X}(t)}[\cdot]$ required in \eqref{eq:Zinkevich}  are not even well-defined.

{\bf Our Contributions:}  This paper solves online convex optimization with stochastic constraints. In particular, we propose a new learning algorithm that is proven to achieve $O(\sqrt{T})$ expected regret and constraint violations and $O(\sqrt{T}\log(T))$ high probability regret and constraint violations.  Along the way, we developed new techniques for stochastic analysis, e.g., Lemma \ref{lm:drift-random-process-bound}, and improve upon state-of-the-art results in the following special cases.

\begin{itemize}[leftmargin=20pt]
\item \emph{OCO with long term constraints:} This is a special case where each $g_k^t(\mathbf{x})\equiv g_k (\mathbf{x})$ is known and does not depend on time. Note that $\mathcal{X} = \{\mathbf{x}\in \mathcal{X}_0: g_k(\mathbf{x})\leq 0, \forall k\in\{1,2,\ldots,m\}\}$ can be complicated while $\mathcal{X}_0$ might be a simple hypercube. To avoid high complexity involved in the projection onto $\mathcal{X}$ as in Zinkevich's algorithm, work in \cite{Mahdavi12JMLR,Cotter15COLT, Jenatton16ICML} develops low complexity algorithms that use projections onto a simpler set $\mathcal{X}_0$ by allowing  $g_k(\mathbf{x}(t))>0$ for certain rounds but ensuring $\limsup_{T\rightarrow \infty} \frac{1}{T}\sum_{t=1}^T g_k(\mathbf{x}(t))\leq 0$. The best existing performance is $O(T^{\max\{\beta, 1-\beta\}})$ regret and $O(T^{1-\beta/2})$ constraint violations where $\beta\in (0,1)$ is an algorithm parameter \cite{Jenatton16ICML}. This gives $O(\sqrt{T})$ regret with worse $O(T^{3/4})$ constraint violations or $O(\sqrt{T})$ constraint violations with worse $O(T)$ regret.  In contrast, our algorithm, which only uses projections onto $\mathcal{X}_0$ as shown in Lemma \ref{lm:x-update}, can achieve $O(\sqrt{T})$ regret and $O(\sqrt{T})$ constraint violations simultaneously.\footnote{By adapting the methodology presented in this paper, our other report \cite{YuNeely16ArxivOnlineOpt} developed a different algorithm that can only solve the special case problem ``OCO with long term constraints" but can achieve $O(\sqrt{T})$ regret and $O(1)$ constraint violations. The current paper also relaxes a deterministic Slater condition assumption required in our other technical report \cite{NeelyYu17ArXiv} for OCO with time-varying constraints, which requires the existence of constant $\epsilon>0$ and fixed point $\hat{\mathbf{x}}\in \mathcal{X}_0$ such that $g_k(\hat{\mathbf{x}};\omega(t))\leq-\epsilon, \forall k\in\{1,2,\ldots,m\}$ for all $\omega(t)\in \Omega$. By relaxing the deterministic Slater condition assumption to the stochastic Slater condition in Assumption \ref{as:interior-point}, the current paper even allows the possibility that $g_k(\mathbf{x};\omega(t))\leq 0$ is infeasible for certain $\omega(t)\in \Omega$. However, under the deterministic Slater condition assumption, our technical report \cite{NeelyYu17ArXiv} shows that if the regret is defined as the cumulative loss difference between our algorithm and the best fixed point from set $\mathcal{A} = \{\mathbf{x}\in\mathcal{X}: g_k(\mathbf{x}; \omega) \leq 0, \forall k\in\{1,2,\ldots,m\}, \forall \omega \in \Omega\}$, which is called a common subset in \cite{NeelyYu17ArXiv}, then our algorithm can achieve $O(\sqrt{T})$ regret and $O(\sqrt{T})$ constraint violations simultaneously even if the constraint functions $g_k^t(\mathbf{x})$ are arbitrarily time-varying (not necessarily i.i.d.). That is, by imposing the additional deterministic Slater condition and restricting the regret to be defined over the common subset $\mathcal{A}$, our algorithm can escape the impossibility shown by Mannor et al. in \cite{Mannor09JMLR}. To the best of our knowledge, this is the first time that specific conditions are proposed to enable sublinear regret and constraints violations simultaneously for OCO with arbitrarily time-varying constraint functions.  Since the current paper focuses on  OCO with stochastic constraints, we refer interested readers to Section IV in \cite{NeelyYu17ArXiv} for results on OCO with arbitrarily time-varying constraints.}
\item \emph{Stochastic constrained convex optimization:} This is a special case where each $f^{t}(\mathbf{x})$ is i.i.d. generated from an unknown distribution. This problem has many applications in operation research and machine learning such as Neyman-Pearson classification and risk-mean portfolio. The work \cite{Mahdavi13NIPS} develops a (batch) offline algorithm that produces a solution with high probability performance guarantees only after sampling the problems for sufficiently many times. That is, during the process of sampling, there are no performance guarantees. The work \cite{Lan16arXiv} proposes a stochastic approximation based (batch) offline algorithm for stochastic convex optimization with one single stochastic functional inequality constraint. In contrast, our algorithm is an online algorithm with online performance guarantees. \footnote{While the analysis of this paper assumes a Slater-type condition, note that our other work \cite{NeelyYu17ArXiv} shows that the Slater condition is not needed in the special case when both the objective and constraint functions vary  i.i.d. over time. (This also includes the case of deterministic constrained convex optimization, since processes that do not vary with time are indeed i.i.d. processes.) In such scenarios, Section VI in our work \cite{NeelyYu17ArXiv} shows that our algorithm works more generally whenever a Lagrange multiplier vector attaining the strong duality exists.} 

\item \emph{Deterministic constrained convex optimization:} This is a special case where each $f^t(\mathbf{x})\equiv f (\mathbf{x})$ and $g_k^t(\mathbf{x})\equiv g_k (\mathbf{x})$ are known and do not depend on time.  In this case, the goal is to develop a fast algorithm that converges to a good solution (with a small error) with a few number of iterations; and our algorithm with $O(\sqrt{T})$ regret and constraint violations is equivalent to an iterative numerical algorithm with $O(1/\sqrt{T})$ convergence rate.  Our algorithm is subgradient based and does not require the smoothness or differentiability of the convex program.  Recall that Nesterov in \cite{book_ConvexOpt_Nesterov} shows that $O(1/\sqrt{T})$ is the best possible convergence rate of any subgradient/gradient based algorithm for non-smooth convex programs. Thus, our algorithm is optimal. The primal-dual subgradient method considered in \cite{Nedic09_PrimalDualSubgradient} has the same $O(1/\sqrt{T})$ convergence rate but requires an upper bound of optimal Lagrange multipliers, which is typically unknown in practice.
Our algorithm does not require such bounds to be known.  
\end{itemize}

\section{Formulation and New Algorithm}

Let $\mathcal{X}_{0}$ be a known fixed compact convex set.  Let $g_k(\mathbf{x}; \omega(t)), k\in \{1,2,\ldots, m\}$ be sequences of functions that are i.i.d. realizations of stochastic constraint functions $\tilde{g}_{k}(\mathbf{x}) \overset{\Delta}{=} \mathbb{E}_{\omega}[g_{k}(\mathbf{x};\omega)]$ with random variable $\omega\in \Omega$ from an unknown distribution. That is, $\omega(t)$ are i.i.d. samples of $\omega$.  Let $f^t(\mathbf{x})$ be a sequence of convex functions that can arbitrarily vary as long as each $f^t(\cdot)$ is independent of all $\omega(\tau)$ with $\tau \geq t + 1$ so that we are unable to predict future constraint functions based on the knowledge of the current loss function. For example, each $f^t(\cdot)$ can even be chosen adversarially based on $\omega(\tau), \tau\in\{1,2,\ldots, t\}$ and actions $\mathbf{x}(\tau), \tau\in\{1,2,\ldots,t\}$. For each $\omega\in \Omega$, we assume $g_k(\mathbf{x}; \omega)$ are convex with respect to $\mathbf{x}\in \mathcal{X}_{0}$.  At the beginning of each round $t$, neither the loss function $f^{t}(\mathbf{x})$ nor the constraint function realizations $g_{k}(\mathbf{x}; \omega(t))$ are known to the decision maker. However, the decision maker still needs to make a decision $\mathbf{x}(t)\in \mathcal{X}_0$ for round $t$; and after that  $f^{t}(\mathbf{x})$ and $g_{k}(\mathbf{x}, \omega(t))$ are disclosed to the decision maker at the end of round $t$. 

For convenience, we often suppress the dependence of each $g_{k}(\mathbf{x}; \omega(t))$ on $\omega(t)$ and write $g_{k}^{t}(\mathbf{x}) = g_{k}(\mathbf{x}; \omega(t))$. Recall $\tilde{g}_{k}(\mathbf{x}) = \mathbb{E}_{\omega}[g_{k}(\mathbf{x};\omega)]$ where the expectation is with respect to $\omega$. Define $\mathcal{X}  =\{\mathbf{x}\in \mathcal{X}_0:  \tilde{g}_k(\mathbf{x}) = \mathbb{E}[g_k(\mathbf{x}; \omega)]\leq 0, \forall k\in \{1,2,\ldots,m\}\}$.  We further define the stacked vector of multiple functions $g^t_{1}(\mathbf{x}), \ldots, g^t_{m}(\mathbf{x})$ as $\mathbf{g}^t(\mathbf{x})= [g^{t}_{1}(\mathbf{x}), \ldots, g^{t}_{m}(\mathbf{x})]\tran$ and define $\tilde{\mathbf{g}}(\mathbf{x}) = [\mathbb{E}_{\omega}[g_{1}(\mathbf{x}; \omega)], \ldots, \mathbb{E}_{\omega}[g_{m}(\mathbf{x}; \omega)]]\tran$.  We use $\Vert \cdot\Vert$ to denote the Euclidean norm for a vector. Throughout this paper, we have the following assumptions:
\begin{Assumption}[Basic Assumptions] \label{as:basic}~
\begin{itemize}[leftmargin=20pt]
\item Loss functions $f^t(\mathbf{x})$ and constraint functions $g_k(\mathbf{x}; \omega)$ have bounded subgradients on $\mathcal{X}_{0}$. That is, there exists $D_{1}>0 $  and $D_{2}>0$ such that $\Vert \nabla f^{t} (\mathbf{x})\Vert \leq D_{1}$ for all $\mathbf{x}\in \mathcal{X}_{0}$ and all $t\in \{0,1,\ldots\}$ and $\Vert \nabla g_{k}(\mathbf{x}; \omega) \Vert \leq D_{2}$ for all $\mathbf{x} \in \mathcal{X}_{0}$, all $\omega\in \Omega$ and all $k\in\{1,2,\ldots,m\}$.\footnote{We use $\nabla h(\mathbf{x})$ to denote a subgradient of a convex function 
$h$ at the point $\mathbf{x}$. 
If the gradient exists, then $\nabla h(\mathbf{x})$ is the gradient.  Nothing in this paper requires gradients to exist: We only need the basic subgradient inequality   $h(\mathbf{y}) \geq h(\mathbf{x}) + [\nabla h(\mathbf{x})]\tran [\mathbf{y}-\mathbf{x}]$ for all $\mathbf{x}, \mathbf{y} \in \mathcal{X}_0$.}
\item There exists constant $G>0$ such that $\Vert \mathbf{g}(\mathbf{x}; \omega)\Vert \leq G$ for all $\mathbf{x}\in \mathcal{X}_0$ and all $\omega\in \Omega$.  
\item There exists constant $R>0$ such that $\Vert \mathbf{x} - \mathbf{y}\Vert\leq R$ for all $\mathbf{x}, \mathbf{y}\in \mathcal{X}_{0}$.
\end{itemize}
\end{Assumption}

\begin{Assumption}[The Slater Condition]\label{as:interior-point}
There exists $\epsilon>0$ and $\hat{\mathbf{x}}\in \mathcal{X}_{0}$ such that $\tilde{g}_{k}(\hat{\mathbf{x}}) = \mathbb{E}_{\omega} [g_{k}(\hat{\mathbf{x}}; \omega)]\leq -\epsilon$ for all $k\in\{1,2,\ldots, m\}$.
\end{Assumption}

\subsection{New Algorithm}

Now consider the following algorithm described in Algorithm \ref{alg:new-alg}. This algorithm chooses $\mathbf{x}(t+1)$ as the decision for round $t+1$ based on $f^{t} (\cdot)$ and $\mathbf{g}^{t}(\cdot)$ without requiring $f^{t+1}(\cdot)$ or $\mathbf{g}^{t+1}(\cdot)$.  

\begin{algorithm} 
\caption{}
\label{alg:new-alg}
Let $V, \alpha$ be constant algorithm parameters.  Choose $\mathbf{x}(1) \in \mathcal{X}_{0}$ arbitrarily and let $Q_{k}(1) = 0, \forall k\in\{1,2,\ldots,m\}$.  At the end of each round $t\in\{1,2,\ldots\}$, observe $f^{t}(\cdot)$ and $\mathbf{g}^{t}(\cdot)$ and do the following:
\begin{itemize}[leftmargin=15pt]
\item  Choose $\mathbf{x}(t+1)$ that solves
\vskip-20pt
\begin{align}
 \min_{\mathbf{x}\in \mathcal{X}_{0}}\big\{ V [\nabla f^{t}(\mathbf{x}(t))]\tran [\mathbf{x} - \mathbf{x}(t)] +  \sum_{k=1}^{m} Q_{k}(t)[\nabla g^{t}_{k}(\mathbf{x}(t))]\tran [\mathbf{x} - \mathbf{x}(t)] + \alpha \Vert \mathbf{x} - \mathbf{x}(t)\Vert^{2} \big\}  \label{eq:x-update}
\end{align}
\vskip -10pt
as the decision for the next round $t+1$, where $\nabla f^{t}(\mathbf{x}(t))$ is a subgradient of $f^{t}(\mathbf{x})$ at point $\mathbf{x}=\mathbf{x}(t)$ and $\nabla g^{t}_{k}(\mathbf{x}(t))$ is a subgradient of $g^{t}_{k}(\mathbf{x})$ at point $\mathbf{x} = \mathbf{x}(t)$.
\item Update each virtual queue $Q_k(t+1), \forall k\in\{1,2,\ldots,m\}$ via 
\vskip -17pt
\begin{align}
Q_{k}(t+1) = \max\left\{Q_{k}(t) + g^{t}_{k}(\mathbf{x}(t)) + [\nabla g_{k}^{t} (\mathbf{x}(t))]\tran [\mathbf{x}(t+1) - \mathbf{x}(t)], 0\right\},  \label{eq:queue-update}
\end{align}
\vskip -5pt
where $\max\{\cdot, \cdot\}$ takes the larger one between two elements.
\end{itemize}
\end{algorithm}

For each stochastic constraint function $g_k(\mathbf{x}; \omega)$, we introduce $Q_k(t)$ and call it a virtual queue since its dynamic is similar to a queue dynamic. The next lemma summarizes that $\mathbf{x}(t+1)$ update in \eqref{eq:x-update} can be implemented via a simple projection onto $\mathcal{X}_0$. 

\begin{Lem} \label{lm:x-update}
The $\mathbf{x}(t+1)$ update  in \eqref{eq:x-update} is given by $\mathbf{x}(t+1) = \mathcal{P}_{\mathcal{X}_{0}}\big[\mathbf{x}(t) - \frac{1}{2\alpha} \mathbf{d}(t)\big]$,
where $\mathbf{d}(t) = V\nabla f^{t}(\mathbf{x}(t)) + \sum_{k=1}^{m} Q_{k}(t) \nabla g^{t}_{k}(\mathbf{x}(t))$ and $\mathcal{P}_{\mathcal{X}_{0}}[\cdot]$ is the projection onto convex set $\mathcal{X}_{0}$.
\end{Lem}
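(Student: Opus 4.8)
The plan is to show that the minimization in \eqref{eq:x-update} is, after dropping terms independent of $\mathbf{x}$ and completing the square, exactly the definition of the Euclidean projection of the point $\mathbf{x}(t) - \frac{1}{2\alpha}\mathbf{d}(t)$ onto $\mathcal{X}_0$. Since $\mathcal{X}_0$ is compact and convex, this projection is well defined and unique, so the identification of the minimizer is unambiguous.

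First I would substitute the definition $\mathbf{d}(t) = V\nabla f^t(\mathbf{x}(t)) + \sum_{k=1}^m Q_k(t)\nabla g_k^t(\mathbf{x}(t))$ to rewrite the objective in \eqref{eq:x-update} compactly as $[\mathbf{d}(t)]\tran[\mathbf{x}-\mathbf{x}(t)] + \alpha\Vert \mathbf{x}-\mathbf{x}(t)\Vert^2$. Next I would complete the square in $\mathbf{x}$: using $\alpha>0$,
\begin{align*}
[\mathbf{d}(t)]\tran[\mathbf{x}-\mathbf{x}(t)] + \alpha\Vert \mathbf{x}-\mathbf{x}(t)\Vert^2 = \alpha\Big\Vert \mathbf{x} - \big(\mathbf{x}(t) - \tfrac{1}{2\alpha}\mathbf{d}(t)\big)\Big\Vert^2 - \tfrac{1}{4\alpha}\Vert \mathbf{d}(t)\Vert^2 .
\end{align*}
The last term does not depend on $\mathbf{x}$, and $\alpha>0$ is a positive constant, so minimizing the left-hand side over $\mathbf{x}\in\mathcal{X}_0$ is equivalent to minimizing $\big\Vert \mathbf{x} - (\mathbf{x}(t) - \frac{1}{2\alpha}\mathbf{d}(t))\big\Vert^2$ over $\mathbf{x}\in\mathcal{X}_0$. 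By definition of the projection operator $\mathcal{P}_{\mathcal{X}_0}[\cdot]$, the unique minimizer is $\mathbf{x}(t+1) = \mathcal{P}_{\mathcal{X}_0}\big[\mathbf{x}(t) - \frac{1}{2\alpha}\mathbf{d}(t)\big]$, which is the claim.

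There is essentially no obstacle here; the only points to be careful about are that $\alpha>0$ (so the quadratic is strictly convex and the completion of the square is legitimate) and that $\mathcal{X}_0$ is nonempty, compact, and convex (guaranteed by the standing assumptions), which is what makes the projection exist and be unique so that ``the'' minimizer is well defined.
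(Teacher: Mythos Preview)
Your proposal is correct and follows the same approach as the paper: both identify the minimization \eqref{eq:x-update} with the projection by recognizing that the objective differs from $\Vert \mathbf{x} - (\mathbf{x}(t) - \tfrac{1}{2\alpha}\mathbf{d}(t))\Vert^2$ only by a constant. The paper states this equivalence in a single line, whereas you spell out the completion of the square explicitly.
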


\begin{proof}
The projection by definition is $\min_{\mathbf{x}\in \mathcal{X}_0}\Vert \mathbf{x} - [\mathbf{x}(t) - \frac{1}{2\alpha} \mathbf{d}(t)]\Vert^2$ and is equivalent to \eqref{eq:x-update}.
\end{proof}

\subsection{Intuitions of Algorithm \ref{alg:new-alg} }
Note that if there are no stochastic constraints $g^t_{k}(\mathbf{x})$, i.e., $\mathcal{X} = \mathcal{X}_0$, then Algorithm \ref{alg:new-alg} has $Q_k(t) \equiv 0, \forall t$ and becomes Zinkevich's algorithm  with $\gamma = \frac{V}{2\alpha}$ in \eqref{eq:Zinkevich} since
\begin{align}
\mathbf{x}(t+1) \overset{(a)}{=}  \argmin_{\mathbf{x}\in \mathcal{X}_0} \big\{ \underbrace{V [\nabla f^{t}(\mathbf{x}(t))]\tran [\mathbf{x} - \mathbf{x}(t)] + \alpha \Vert \mathbf{x} - \mathbf{x}(t)\Vert^{2}}_{\text{penalty}} \big\} \overset{(b)}{=} \mathcal{P}_{\mathcal{X}_0} \big [\mathbf{x}(t) - \frac{V}{2\alpha} \nabla f^{t} (\mathbf{x}(t))\big] \label{eq:new-alg-as-Zinkevich}
\end{align}
where (a) follows from \eqref{eq:x-update}; and (b) follows from Lemma \ref{lm:x-update} by noting that $\mathbf{d}(t) = V\nabla f^t(\mathbf{x}(t))$. Call  the term marked by an underbrace in \eqref{eq:new-alg-as-Zinkevich} the \emph{penalty}. Thus, Zinkevich's algorithm is to minimize the \emph{penalty} term and is a special case of Algorithm \ref{alg:new-alg} used to solve OCO over $\mathcal{X}_0$.

Let $\mathbf{Q}(t) = \big[ Q_1(t), \ldots, Q_m(t)\big]\tran$ be the vector of virtual queue backlogs. Let $L(t) = \frac{1}{2} \Vert \mathbf{Q}(t)\Vert^2$ be a \emph{Lyapunov function} and define  \emph{Lyapunov drift}

\begin{align}
\Delta (t) = L(t+1) - L(t) = \frac{1}{2} [ \Vert \mathbf{Q}(t+1)\Vert^{2} - \Vert \mathbf{Q}(t)\Vert^{2}]. \label{eq:def-drift}
\end{align}

The intuition behind Algorithm \ref{alg:new-alg} is to choose $\mathbf{x}(t+1)$ to minimize an upper bound of the expression
\begin{align}
\underbrace{\Delta(t)}_{\text{drift}} + \underbrace{V [\nabla f^{t}(\mathbf{x}(t))]\tran [\mathbf{x} - \mathbf{x}(t)] + \alpha \Vert \mathbf{x} - \mathbf{x}(t)\Vert^{2}}_{\text{penalty}} \label{eq:def-dpp}
\end{align}
The intention to minimize penalty is natural since Zinkevich's algorithm (for OCO without stochastic constraints) minimizes penalty, while the intention to minimize drift is motivated by observing that $g_k^{t}(\mathbf{x}(t))$ is accumulated into queue $Q_k(t+1)$ introduced in \eqref{eq:queue-update} such that we intend to have small queue backlogs.  The drift $\Delta(t)$ can be complicated and is in general non-convex. The next lemma  provides a simple upper bound of $\Delta(t)$ and follows directly from \eqref{eq:queue-update}. 

\begin{Lem}\label{lm:drift}At each round $t\in\{1,2,\ldots\}$, Algorithm \ref{alg:new-alg} guarantees

\begin{align}
\Delta(t) \leq  \sum_{k=1}^{m} Q_{k}(t) \big[g_{k}^{t}(\mathbf{x}(t))  + [\nabla g_{k}^{t}(\mathbf{x}(t))]\tran [\mathbf{x}(t+1) - \mathbf{x}(t)]\big] +   \frac{1}{2}(G+\sqrt{m}D_{2} R)^{2},  \label{eq:drift-upper-bound}
\end{align}
where $m$ is the number of constraint functions; and $D_{2}, G$ and $R$ are defined in Assumption \ref{as:basic}.
\end{Lem}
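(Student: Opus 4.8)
\emph{Proof proposal.} The plan is to derive the drift bound coordinate-by-coordinate from the queue recursion \eqref{eq:queue-update}, and then absorb the resulting quadratic error into a constant using the boundedness hypotheses of Assumption \ref{as:basic}.

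First I would invoke the elementary fact that for any real numbers $q, a$ the value $Q = \max\{q+a, 0\}$ satisfies $Q^2 \le (q+a)^2$ (if $Q = q+a$ this is an equality; if $Q = 0$ then $0 \le (q+a)^2$). Applying this to \eqref{eq:queue-update} with $a$ equal to the argument inside the $\max$, then expanding the square, subtracting $Q_k(t)^2$, dividing by $2$, and summing over $k\in\{1,\ldots,m\}$ yields
\[
\Delta(t) \le \sum_{k=1}^m Q_k(t)\big[g_k^t(\mathbf{x}(t)) + [\nabla g_k^t(\mathbf{x}(t))]\tran[\mathbf{x}(t+1)-\mathbf{x}(t)]\big] + \frac{1}{2}\sum_{k=1}^m \big(g_k^t(\mathbf{x}(t)) + [\nabla g_k^t(\mathbf{x}(t))]\tran[\mathbf{x}(t+1)-\mathbf{x}(t)]\big)^2.
\]
The first sum is precisely the linear term in \eqref{eq:drift-upper-bound}, so the only remaining task is to bound the second sum by $(G + \sqrt{m}\, D_2 R)^2$.

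For that, I would regard the $m$-vector whose $k$-th entry is $g_k^t(\mathbf{x}(t)) + [\nabla g_k^t(\mathbf{x}(t))]\tran[\mathbf{x}(t+1)-\mathbf{x}(t)]$ as the sum of $\mathbf{g}^t(\mathbf{x}(t))$ and the vector whose $k$-th entry is $[\nabla g_k^t(\mathbf{x}(t))]\tran[\mathbf{x}(t+1)-\mathbf{x}(t)]$. By the triangle inequality its Euclidean norm is at most $\Vert\mathbf{g}^t(\mathbf{x}(t))\Vert$ plus the norm of the second vector; Assumption \ref{as:basic} gives $\Vert\mathbf{g}^t(\mathbf{x}(t))\Vert \le G$, while Cauchy--Schwarz with $\Vert\nabla g_k^t(\mathbf{x}(t))\Vert \le D_2$ and $\Vert\mathbf{x}(t+1)-\mathbf{x}(t)\Vert \le R$ (both iterates lie in $\mathcal{X}_0$) bounds each entry of the second vector by $D_2 R$, hence its norm by $\sqrt{m}\, D_2 R$. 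Squaring, the second sum above is at most $(G + \sqrt{m}\, D_2 R)^2$, which completes the argument.

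I do not expect any genuine obstacle: the whole thing is a routine Lyapunov-drift computation. The only points needing mild care are that $Q_k(t+1)^2 \le (\text{argument of }\max)^2$ holds regardless of the sign of that argument, and the bookkeeping of the $\sqrt{m}$ factor when converting the per-coordinate bound $D_2 R$ into a Euclidean-norm bound on the correction vector.
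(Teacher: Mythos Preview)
Your proposal is correct and follows essentially the same approach as the paper: both use $(\max\{q+a,0\})^2 \le (q+a)^2$ on the queue recursion, expand and sum to obtain the linear term plus $\tfrac{1}{2}\Vert \mathbf{h}\Vert^2$ with $h_k = g_k^t(\mathbf{x}(t)) + [\nabla g_k^t(\mathbf{x}(t))]\tran[\mathbf{x}(t+1)-\mathbf{x}(t)]$, then bound $\Vert\mathbf{h}\Vert \le G + \sqrt{m}\,D_2 R$ via the triangle inequality, Cauchy--Schwarz, and Assumption~\ref{as:basic}. The only difference is notational.
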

\begin{proof}  
Recall that for any $b\in \mathbb{R}$, if $a = \max\{b, 0\}$ then $a^{2} \leq b^{2}$. Fix $k\in\{1,2,\ldots, m\}$. The virtual queue update equation  $Q_{k}(t+1) = \max\left\{Q_{k}(t) + g^{t}_{k}(\mathbf{x}(t)) + [\nabla g_{k}^{t} (\mathbf{x}(t))]\tran [\mathbf{x}(t+1) - \mathbf{x}(t)], 0\right\}$ implies that 
\begin{align}
\frac{1}{2} [Q_{k}(t+1)] ^{2} \leq& \frac{1}{2} \big[ Q_{k}(t) + g^{t}_{k}(\mathbf{x}(t)) + [\nabla g_{k}^{t} (\mathbf{x}(t))]\tran [\mathbf{x}(t+1) - \mathbf{x}(t)]\big]^{2} \nonumber \\
=&  \frac{1}{2} [Q_{k}(t)]^{2} + Q_{k}(t) \big[g_{k}^{t}(\mathbf{x}(t))  + [\nabla g_{k}^{t}(\mathbf{x}(t))]\tran [\mathbf{x}(t+1) - \mathbf{x}(t)]\big]  \nonumber\\& \quad+  \frac{1}{2} \big[g_{k}^{t}(\mathbf{x}(t))  + [\nabla g_{k}^{t}(\mathbf{x}(t))]\tran [\mathbf{x}(t+1) - \mathbf{x}(t)]\big]^{2}\nonumber\\
\overset{(a)}{=}& \frac{1}{2} [Q_{k}(t)]^{2} + Q_{k}(t) \big[g_{k}^{t}(\mathbf{x}(t))  + [\nabla g_{k}^{t}(\mathbf{x}(t))]\tran [\mathbf{x}(t+1) - \mathbf{x}(t)]\big]  + \frac{1}{2} [ h_{k}]^{2}, \label{eq:pf-lm-drift-eq1}
\end{align}
where (a) follows by defining $h_{k} = g_{k}^{t}(\mathbf{x}(t)) + [\nabla g_{k}^{t}(\mathbf{x}(t))]\tran [\mathbf{x}(t+1) - \mathbf{x}(t)]$.  

Define $\mathbf{s} = [s_{1},\ldots, s_{m}]\tran$, where $s_{k} = [\nabla g_{k}^{t}(\mathbf{x}(t))]\tran [\mathbf{x}(t+1) - \mathbf{x}(t)], \forall k\in\{1,2,\ldots, m\}$; and $\mathbf{h} = [h_{1}, \ldots, h_{m}]\tran = \mathbf{g}^{t}(\mathbf{x}(t)) + \mathbf{s}$. Then, 
\begin{align}
\Vert \mathbf{h}\Vert \overset{(a)}{\leq}& \Vert \mathbf{g}^{t}(\mathbf{x}(t))\Vert + \Vert \mathbf{s}\Vert 
\overset{(b)}{\leq}  G + \sqrt{\sum_{k=1}^{m} D^{2}_{2} R^{2}} 
= G+ \sqrt{m}D_{2} R, \label{eq:pf-lm-drift-eq2}
\end{align}
where (a) follows from the triangle inequality; and (b) follows from the definition of Euclidean norm, the Cauchy-Schwartz inequality and Assumption \ref{as:basic}.

Summing \eqref{eq:pf-lm-drift-eq1} over $k\in\{1,2,\ldots,m\}$ yields 
\begin{align*}
\frac{1}{2}\Vert\mathbf{Q}(t+1)\Vert^{2} \leq& \frac{1}{2} \Vert \mathbf{Q}(t)\Vert^{2} + \sum_{k=1}^{m} Q_{k}(t)\big[g_{k}^{t}(\mathbf{x}(t))  + [\nabla g_{k}^{t}(\mathbf{x}(t))]\tran [\mathbf{x}(t+1) - \mathbf{x}(t)]\big]  + \Vert \mathbf{h}\Vert^2 \\
\overset{(a)}{\leq} &\frac{1}{2} \Vert \mathbf{Q}(t)\Vert^{2} +  \sum_{k=1}^{m} Q_{k}(t)\big[g_{k}^{t}(\mathbf{x}(t))  + [\nabla g_{k}^{t}(\mathbf{x}(t))]\tran [\mathbf{x}(t+1) - \mathbf{x}(t)]\big] + \frac{1}{2}(G+ \sqrt{m} D_{2} R)^{2},
\end{align*}
where (b) follows from \eqref{eq:pf-lm-drift-eq2}. Rearranging the terms yields the desired result.
\end{proof}
At the end of round $t$, $\sum_{k=1}^m Q_k(t) g_k^t(\mathbf{x}(t)) + \frac{1}{2}[G+\sqrt{m}D_{2} R]^{2}$ is a given constant that is not affected by decision $\mathbf{x}(t+1)$. The algorithm decision in \eqref{eq:x-update} is now transparent: $\mathbf{x}(t+1)$ is chosen to minimize the drift-plus-penalty expression \eqref{eq:def-dpp}, where $\Delta(t)$ is approximated by the bound in \eqref{eq:drift-upper-bound}. 

\subsection{Preliminary Analysis and More Intuitions of Algorithm \ref{alg:new-alg}}

The next lemma  relates constraint violations and virtual queue values and follows directly from \eqref{eq:queue-update}.
\begin{Lem}\label{lm:queue-constraint-inequality}
For any $T\geq 1$, Algorithm \ref{alg:new-alg} guarantees $\sum_{t=1}^{T} g^{t}_{k}(\mathbf{x}(t)) \leq  \Vert \mathbf{Q}(T+1)\Vert  +  D_2 \sum_{t=1}^{T} \Vert \mathbf{x}(t+1) - \mathbf{x}(t)\Vert, \forall k\in\{1,2,\ldots,m\}$, where $D_{2}$ is defined in Assumption \ref{as:basic}.
\end{Lem}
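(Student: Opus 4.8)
The plan is to exploit the fact that the $\max\{\cdot,0\}$ in the virtual queue update \eqref{eq:queue-update} can only increase the argument, so each queue update gives a one-sided inequality that, once rearranged, isolates $g_k^t(\mathbf{x}(t))$. Concretely, from \eqref{eq:queue-update} we have, for every $k$ and every $t$,
\begin{align*}
Q_k(t+1) \geq Q_k(t) + g_k^t(\mathbf{x}(t)) + [\nabla g_k^t(\mathbf{x}(t))]\tran[\mathbf{x}(t+1)-\mathbf{x}(t)],
\end{align*}
which rearranges to $g_k^t(\mathbf{x}(t)) \leq Q_k(t+1) - Q_k(t) - [\nabla g_k^t(\mathbf{x}(t))]\tran[\mathbf{x}(t+1)-\mathbf{x}(t)]$.

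Next I would sum this inequality over $t\in\{1,\ldots,T\}$. The $Q_k(t+1)-Q_k(t)$ terms telescope, leaving $Q_k(T+1)-Q_k(1)$, and since Algorithm \ref{alg:new-alg} initializes $Q_k(1)=0$, this is just $Q_k(T+1)$. For the remaining sum of inner-product terms, I would apply the Cauchy--Schwarz inequality term by term, $-[\nabla g_k^t(\mathbf{x}(t))]\tran[\mathbf{x}(t+1)-\mathbf{x}(t)] \leq \Vert \nabla g_k^t(\mathbf{x}(t))\Vert\,\Vert\mathbf{x}(t+1)-\mathbf{x}(t)\Vert$, and then use the bounded-subgradient part of Assumption \ref{as:basic}, namely $\Vert \nabla g_k^t(\mathbf{x}(t))\Vert \leq D_2$, to bound this by $D_2\sum_{t=1}^T\Vert\mathbf{x}(t+1)-\mathbf{x}(t)\Vert$.

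Finally, since each $Q_k(T+1)\geq 0$, it is dominated by the Euclidean norm of the stacked vector, $Q_k(T+1)\leq \Vert\mathbf{Q}(T+1)\Vert$, which yields exactly the claimed bound $\sum_{t=1}^{T} g^{t}_{k}(\mathbf{x}(t)) \leq \Vert \mathbf{Q}(T+1)\Vert + D_2 \sum_{t=1}^{T} \Vert \mathbf{x}(t+1) - \mathbf{x}(t)\Vert$. There is no real obstacle here: the argument is a routine telescoping-plus-Cauchy--Schwarz computation, and the only point requiring a moment's care is the last step replacing the single coordinate $Q_k(T+1)$ by the full norm $\Vert\mathbf{Q}(T+1)\Vert$ (valid precisely because the queues are nonnegative by construction).
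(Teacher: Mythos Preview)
Your proposal is correct and follows essentially the same argument as the paper's proof: use $\max\{\cdot,0\}\geq\cdot$ in \eqref{eq:queue-update}, rearrange, telescope using $Q_k(1)=0$, apply Cauchy--Schwarz together with the subgradient bound $D_2$, and finally replace $Q_k(T+1)$ by $\Vert\mathbf{Q}(T+1)\Vert$. The only cosmetic difference is that the paper applies Cauchy--Schwarz and the $D_2$ bound before rearranging and summing, whereas you do so afterward.
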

\begin{proof}
Fix $k\in\{1,2,\ldots,m\}$ and $T \geq 1$.  For any $t \in \{0,1, \ldots \}$, \eqref{eq:queue-update} in Algorithm \ref{alg:new-alg} gives: 
\begin{align*}
Q_k(t+1) &= \max\{Q_{k}(t) + g^{t}_{k}(\mathbf{x}(t)) + [\nabla g_{k}^{t} (\mathbf{x}(t))]\tran [\mathbf{x}(t+1) - \mathbf{x}(t)], 0\} \\
& \geq Q_{k}(t) + g^{t}_{k}(\mathbf{x}(t)) + [\nabla g_{k}^{t} (\mathbf{x}(t))]\tran [\mathbf{x}(t+1) - \mathbf{x}(t)]\\
&\overset{(a)}{\geq } Q_{k}(t) + g^{t}_{k}(\mathbf{x}(t)) - \Vert\nabla g_{k}^{t} (\mathbf{x}(t)) \Vert \Vert \mathbf{x}(t+1) - \mathbf{x}(t)\Vert\\
&\overset{(b)}{\geq } Q_{k}(t) + g^{t}_{k}(\mathbf{x}(t)) - D_{2} \Vert \mathbf{x}(t+1) - \mathbf{x}(t)\Vert, 
\end{align*}
where (a) follows from the Cauchy-Schwartz inequality and (b) follows from Assumption \ref{as:basic}. Rearranging terms yields
\begin{align*}
g_{k}^{t}(\mathbf{x}(t)) &\leq Q_k(t+1) - Q_k(t) + D_{2} \Vert \mathbf{x}(t+1) - \mathbf{x}(t)\Vert.
\end{align*}
Summing over $t \in \{1, \ldots, T\}$ yields 
\begin{align*}
\sum_{t=1}^{T} g_{k}^{t}(\mathbf{x}(t)) &\leq Q_{k}(T+1) - Q_{k}(1) + D_2 \sum_{t=1}^{T} \Vert \mathbf{x}(t+1)  - \mathbf{x}(t)\Vert\\
&\overset{(a)}{=} Q_{k}(T+1) + D_2 \sum_{t=1}^{T} \Vert \mathbf{x}(t+1)  - \mathbf{x}(t)\Vert \\
&\leq \Vert \mathbf{Q}(T+1)\Vert + D_2 \sum_{t=1}^{T} \Vert \mathbf{x}(t+1)  - \mathbf{x}(t)\Vert.
\end{align*} 
where (a) follows from the fact $Q_k(1) = 0$. 
\end{proof}
Recall that function $h:\mathcal{X}_0\rightarrow\mathbb{R}$ is said to be 
\emph{$c$-strongly convex} if $h(\mathbf{x}) - \frac{c}{2}\Vert \mathbf{x}\Vert^2$ is convex over $\mathbf{x}\in\mathcal{X}_0$. By the definition of strongly convex functions, it is easy to see that if  
$\phi:\mathcal{X}_0\rightarrow\mathbb{R}$ is a convex function, then for any constant $c>0$ and any constant vector $\mathbf{x}_0$, the function $\phi(x) + \frac{c}{2}\Vert\mathbf{x}-\mathbf{x}_0\Vert^2$ is $c$-strongly convex.  Further, it is known that if $h:\mathcal{X}\rightarrow\mathbb{R}$ is a $c$-strongly convex function and is minimized at point $\mathbf{x}^{min} \in \mathcal{X}_0$, then (see, for example, Corollary 1 in \cite{YuNeely17SIOPT}): 
\begin{align} 
h(\mathbf{x}^{min}) \leq h(\mathbf{x}) - \frac{c}{2}\Vert \mathbf{x}-\mathbf{x}^{min}\Vert^2 \quad \forall \mathbf{x} \in \mathcal{X}_0 \label{eq:strongly} 
\end{align} 
Note that the expression involved in minimization \eqref{eq:x-update} in Algorithm \ref{alg:new-alg} is strongly convex with modulus $2\alpha$ and $\mathbf{x}(t+1)$ is chosen to minimize it. Thus, the next lemma follows.

\begin{Lem}\label{lm:decision-update-inequality}
Let $\mathbf{z}\in \mathcal{X}_{0}$ be arbitrary. For all $t\geq 1$, Algorithm \ref{alg:new-alg} guarantees
\begin{align*}
&V [\nabla f^{t}(\mathbf{x}(t))]\tran [\mathbf{x}(t+1) - \mathbf{x}(t)]+  \sum_{k=1}^{m} Q_{k}(t)[\nabla g^{t}_{k}(\mathbf{x}(t))]\tran [\mathbf{x}(t+1) - \mathbf{x}(t)] + \alpha \Vert \mathbf{x}(t+1) - \mathbf{x}(t)\Vert^{2} \\
\leq& V [\nabla f^{t}(\mathbf{x}(t))]\tran [\mathbf{z} - \mathbf{x}(t)]+  \sum_{k=1}^{m} Q_{k}(t)[\nabla g^{t}_{k}(\mathbf{x}(t))]\tran [\mathbf{z} - \mathbf{x}(t)] + \alpha \Vert \mathbf{z} - \mathbf{x}(t)\Vert^{2} - \alpha\Vert \mathbf{z} - \mathbf{x}(t+1)\Vert^{2}. 
\end{align*}
\end{Lem}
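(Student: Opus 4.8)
The plan is to recognize the objective in \eqref{eq:x-update} as a strongly convex function and apply the strong convexity inequality \eqref{eq:strongly} directly. Concretely, I would define
\[
\psi_t(\mathbf{x}) = V [\nabla f^{t}(\mathbf{x}(t))]\tran [\mathbf{x} - \mathbf{x}(t)] + \sum_{k=1}^{m} Q_{k}(t)[\nabla g^{t}_{k}(\mathbf{x}(t))]\tran [\mathbf{x} - \mathbf{x}(t)] + \alpha \Vert \mathbf{x} - \mathbf{x}(t)\Vert^{2},
\]
which is the expression minimized over $\mathcal{X}_0$ in Algorithm \ref{alg:new-alg} to produce $\mathbf{x}(t+1)$.

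First I would observe that, at a fixed round $t$, the quantities $\nabla f^t(\mathbf{x}(t))$, $Q_k(t)$, and $\nabla g^t_k(\mathbf{x}(t))$ are all constants, so the first two sums in $\psi_t$ are affine (hence convex) in $\mathbf{x}$, and the last term is $\alpha\Vert\mathbf{x}-\mathbf{x}(t)\Vert^2$. By the remark preceding the lemma (a convex function plus $\frac{c}{2}\Vert\mathbf{x}-\mathbf{x}_0\Vert^2$ is $c$-strongly convex), $\psi_t$ is $2\alpha$-strongly convex over $\mathcal{X}_0$. Since $\mathbf{x}(t+1)$ minimizes $\psi_t$ over $\mathcal{X}_0$ by definition \eqref{eq:x-update}, inequality \eqref{eq:strongly} with $c = 2\alpha$, $\mathbf{x}^{min} = \mathbf{x}(t+1)$, and arbitrary $\mathbf{x} = \mathbf{z}\in\mathcal{X}_0$ gives
\[
\psi_t(\mathbf{x}(t+1)) \leq \psi_t(\mathbf{z}) - \alpha\Vert \mathbf{z} - \mathbf{x}(t+1)\Vert^2.
\]

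Finally I would simply expand $\psi_t(\mathbf{x}(t+1))$ and $\psi_t(\mathbf{z})$ according to the definition of $\psi_t$; the resulting inequality is exactly the claimed statement. There is essentially no obstacle here beyond correctly identifying the strong convexity modulus as $2\alpha$ (not $\alpha$), which matters because it is what produces the clean $-\alpha\Vert\mathbf{z}-\mathbf{x}(t+1)\Vert^2$ slack term on the right-hand side; this slack is what will later telescope in the regret analysis, so it is worth stating carefully.
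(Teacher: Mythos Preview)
Your proposal is correct and matches the paper's argument essentially verbatim: the paper simply notes that the expression in \eqref{eq:x-update} is $2\alpha$-strongly convex and that $\mathbf{x}(t+1)$ minimizes it, then invokes \eqref{eq:strongly}. Your explicit identification of $\psi_t$, the $2\alpha$ modulus, and the substitution $c=2\alpha$, $\mathbf{x}^{min}=\mathbf{x}(t+1)$, $\mathbf{x}=\mathbf{z}$ is exactly what the paper leaves implicit.
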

The next corollary follows by taking $\mathbf{z}= \mathbf{x}(t)$ in Lemma \ref{lm:decision-update-inequality}. 
\begin{Cor} \label{cor:x-difference-bound}
For all $t\geq 1$, Algorithm \ref{alg:new-alg} guarantees $\Vert \mathbf{x}(t+1) - \mathbf{x}(t)\Vert \leq \frac{VD_1}{2\alpha} + \frac{\sqrt{m}D_2}{2\alpha} \Vert \mathbf{Q}(t)\Vert$.
\end{Cor}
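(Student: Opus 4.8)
The plan is to apply Lemma \ref{lm:decision-update-inequality} with the particular choice $\mathbf{z} = \mathbf{x}(t)$, exactly as the statement of the corollary instructs. With this substitution every term containing $\mathbf{z} - \mathbf{x}(t)$ on the right-hand side vanishes, the term $\alpha\Vert \mathbf{z} - \mathbf{x}(t)\Vert^{2}$ vanishes, and $-\alpha\Vert \mathbf{z} - \mathbf{x}(t+1)\Vert^{2}$ becomes $-\alpha\Vert \mathbf{x}(t) - \mathbf{x}(t+1)\Vert^{2}$. Hence the inequality collapses to
\begin{align*}
&V [\nabla f^{t}(\mathbf{x}(t))]\tran [\mathbf{x}(t+1) - \mathbf{x}(t)]+  \sum_{k=1}^{m} Q_{k}(t)[\nabla g^{t}_{k}(\mathbf{x}(t))]\tran [\mathbf{x}(t+1) - \mathbf{x}(t)] + \alpha \Vert \mathbf{x}(t+1) - \mathbf{x}(t)\Vert^{2} \\
\leq& -\alpha\Vert \mathbf{x}(t+1) - \mathbf{x}(t)\Vert^{2},
\end{align*}
so that, moving the quadratic term over, $2\alpha\Vert \mathbf{x}(t+1)-\mathbf{x}(t)\Vert^{2}$ is bounded above by $-V [\nabla f^{t}(\mathbf{x}(t))]\tran [\mathbf{x}(t+1) - \mathbf{x}(t)] - \sum_{k=1}^{m} Q_{k}(t)[\nabla g^{t}_{k}(\mathbf{x}(t))]\tran [\mathbf{x}(t+1) - \mathbf{x}(t)]$.

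Next I would bound the right-hand side by the Cauchy--Schwarz inequality applied to each inner product, then invoke the subgradient bounds $\Vert \nabla f^{t}(\mathbf{x}(t))\Vert \le D_{1}$ and $\Vert \nabla g^{t}_{k}(\mathbf{x}(t))\Vert \le D_{2}$ from Assumption \ref{as:basic} (and the nonnegativity of the $Q_{k}(t)$, so the sign manipulations are valid). This yields
\begin{align*}
2\alpha\Vert \mathbf{x}(t+1)-\mathbf{x}(t)\Vert^{2} \le \Big( V D_{1} + D_{2}\sum_{k=1}^{m} Q_{k}(t)\Big)\Vert \mathbf{x}(t+1)-\mathbf{x}(t)\Vert .
\end{align*}
Then I would replace $\sum_{k=1}^{m} Q_{k}(t)$ by $\sqrt{m}\,\Vert \mathbf{Q}(t)\Vert$ using Cauchy--Schwarz once more (i.e. $\mathbf{1}\tran\mathbf{Q}(t) \le \Vert \mathbf{1}\Vert\,\Vert \mathbf{Q}(t)\Vert = \sqrt{m}\,\Vert \mathbf{Q}(t)\Vert$, again using $Q_{k}(t)\ge 0$).

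Finally, if $\Vert \mathbf{x}(t+1)-\mathbf{x}(t)\Vert = 0$ the claimed inequality holds trivially since the right-hand side is nonnegative; otherwise divide both sides of the last display by $\Vert \mathbf{x}(t+1)-\mathbf{x}(t)\Vert$ and then by $2\alpha$ to obtain $\Vert \mathbf{x}(t+1) - \mathbf{x}(t)\Vert \leq \frac{VD_1}{2\alpha} + \frac{\sqrt{m}D_2}{2\alpha} \Vert \mathbf{Q}(t)\Vert$, as desired. There is no real obstacle here: the argument is a direct specialization of Lemma \ref{lm:decision-update-inequality} followed by two applications of Cauchy--Schwarz; the only points requiring a word of care are the harmless division-by-zero case and making sure the sign of the $Q_{k}(t)$ terms works out when bounding the inner products, which it does because $Q_{k}(t)\ge 0$ for all $k$ and $t$.
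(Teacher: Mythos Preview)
Your proposal is correct and follows essentially the same approach as the paper: take $\mathbf{z}=\mathbf{x}(t)$ in Lemma~\ref{lm:decision-update-inequality}, rearrange to isolate $2\alpha\Vert\mathbf{x}(t+1)-\mathbf{x}(t)\Vert^{2}$, bound the linear terms via Cauchy--Schwarz and Assumption~\ref{as:basic}, and divide out. The only cosmetic difference is in how the constraint-gradient sum is handled: the paper applies Cauchy--Schwarz once in $\mathbb{R}^{m}$ to pull out $\Vert\mathbf{Q}(t)\Vert$ and then bounds the resulting $\ell_{2}$-norm entrywise, whereas you bound each inner product individually and then use $\sum_{k} Q_{k}(t)\le\sqrt{m}\,\Vert\mathbf{Q}(t)\Vert$; both routes give the same constant, and your explicit treatment of the $\Vert\mathbf{x}(t+1)-\mathbf{x}(t)\Vert=0$ case is a nice touch the paper omits.
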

\begin{proof}
Fix $t\geq 1$. Note that $\mathbf{x}(t)\in \mathcal{X}_0$. Taking $\mathbf{z} = \mathbf{x}(t)$ in Lemma \ref{lm:decision-update-inequality} yields
\begin{align*}
&V [\nabla f^{t}(\mathbf{x}(t))]\tran [\mathbf{x}(t+1) - \mathbf{x}(t)]+  \sum_{k=1}^{m} Q_{k}(t) [\nabla g^{t}_{k}(\mathbf{x}(t))]\tran [\mathbf{x}(t+1) - \mathbf{x}(t)] + \alpha \Vert \mathbf{x}(t+1) - \mathbf{x}(t)\Vert^{2} \\
\leq& - \alpha \Vert \mathbf{x}(t) - \mathbf{x}(t+1)\Vert^{2}. 
\end{align*}
Rearranging terms and cancelling common terms yields
\begin{align*}
2\alpha \Vert \mathbf{x}(t+1) - \mathbf{x}(t)\Vert^2 \leq& -V [\nabla f^{t}(\mathbf{x}(t))]\tran [\mathbf{x}(t+1) - \mathbf{x}(t)] - \sum_{k=1}^{m} Q_{k}(t)[\nabla g^{t}_{k}(\mathbf{x}(t))]\tran [\mathbf{x}(t+1) - \mathbf{x}(t)]  \\
\overset{(a)}{\leq}& V \Vert \nabla f^{t}(\mathbf{x}(t))\Vert \Vert \mathbf{x}(t+1) - \mathbf{x}(t)\Vert + \Vert \mathbf{Q}(t)\Vert \sqrt{\sum_{k=1}^m \Vert \nabla g^{t}_{k}(\mathbf{x}(t))\Vert^2 \Vert \mathbf{x}(t+1) - \mathbf{x}(t)\Vert^2}\\
\overset{(b)}{\leq} & V D_1 \Vert \mathbf{x}(t+1) - \mathbf{x}(t)\Vert + \sqrt{m}D_2 \Vert \mathbf{Q}(t)\Vert \Vert \mathbf{x}(t+1) - \mathbf{x}(t)\Vert
\end{align*}
where (a) follows by the Cauchy-Schwarz inequality (note that the second term on the right side applies the Cauchy-Schwarz inequality twice); and (b) follows from Assumption \ref{as:basic}. 

Thus, we have 
\begin{align*}
\Vert \mathbf{x}(t+1) - \mathbf{x}(t)\Vert \leq \frac{VD_1}{2\alpha} + \frac{\sqrt{m}D_2}{2\alpha} \Vert \mathbf{Q}(t)\Vert.
\end{align*}
\end{proof}

The next corollary follows directly from Lemma \ref{lm:queue-constraint-inequality} and Corollary \ref{cor:x-difference-bound} and shows that constraint violations are ultimately bounded by sequence $\Vert \mathbf{Q}(t)\Vert, t\in\{1,2,\ldots, T+1\}$.
\begin{Cor}\label{cor:queue-constraint-inequality}
For any $T\geq 1$, Algorithm \ref{alg:new-alg} guarantees 
$$\sum_{t=1}^{T} g^t_{k}(\mathbf{x}(t)) \leq   \Vert \mathbf{Q}(T+1)\Vert+  \frac{VTD_1D_2}{2\alpha} + \frac{\sqrt{m}D_2^2}{2\alpha}\sum_{t=1}^{T} \Vert \mathbf{Q}(t)\Vert \quad ,  \forall k\in\{1,2,\ldots,m\}$$
 where $D_1$ and $D_{2}$ are defined in Assumption \ref{as:basic}.
\end{Cor}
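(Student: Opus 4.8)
The plan is to simply chain the two results already established. I would start from Lemma~\ref{lm:queue-constraint-inequality}, which for each fixed $k\in\{1,2,\ldots,m\}$ and each $T\geq 1$ gives
\[
\sum_{t=1}^{T} g^{t}_{k}(\mathbf{x}(t)) \leq \Vert \mathbf{Q}(T+1)\Vert + D_2 \sum_{t=1}^{T} \Vert \mathbf{x}(t+1) - \mathbf{x}(t)\Vert .
\]
The only remaining task is to control the running sum $\sum_{t=1}^{T}\Vert \mathbf{x}(t+1)-\mathbf{x}(t)\Vert$ of successive decision movements.

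Next I would invoke Corollary~\ref{cor:x-difference-bound}, which holds for every $t\geq 1$, to replace each term $\Vert \mathbf{x}(t+1)-\mathbf{x}(t)\Vert$ by $\frac{VD_1}{2\alpha} + \frac{\sqrt{m}D_2}{2\alpha}\Vert \mathbf{Q}(t)\Vert$. Summing this bound over $t\in\{1,\ldots,T\}$ yields
\[
\sum_{t=1}^{T}\Vert \mathbf{x}(t+1)-\mathbf{x}(t)\Vert \leq \frac{VTD_1}{2\alpha} + \frac{\sqrt{m}D_2}{2\alpha}\sum_{t=1}^{T}\Vert \mathbf{Q}(t)\Vert ,
\]
where the constant term $\frac{VD_1}{2\alpha}$ accumulates exactly $T$ times. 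Multiplying through by $D_2$ and substituting into the inequality from Lemma~\ref{lm:queue-constraint-inequality} gives precisely
\[
\sum_{t=1}^{T} g^{t}_{k}(\mathbf{x}(t)) \leq \Vert \mathbf{Q}(T+1)\Vert + \frac{VTD_1D_2}{2\alpha} + \frac{\sqrt{m}D_2^{2}}{2\alpha}\sum_{t=1}^{T}\Vert \mathbf{Q}(t)\Vert ,
\]
and since $k$ was arbitrary the claim holds for all $k\in\{1,2,\ldots,m\}$.

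There is essentially no obstacle here: the corollary is a pure bookkeeping consequence of stitching Lemma~\ref{lm:queue-constraint-inequality} together with the per-round movement bound of Corollary~\ref{cor:x-difference-bound}. The one point worth a sentence of care is that Corollary~\ref{cor:x-difference-bound} is valid for \emph{every} index $t\geq 1$, so term-by-term substitution inside the finite sum $\sum_{t=1}^{T}$ is legitimate; no convexity, no properties of the queue update, and no further use of Assumption~\ref{as:basic} beyond what those two prior results already incorporate are needed. The significance of the statement, rather than its difficulty, is that it reduces bounding the cumulative constraint violation to bounding the single sequence $\Vert \mathbf{Q}(t)\Vert$, which is the quantity the subsequent drift analysis (Lemma~\ref{lm:drift-random-process-bound}) will be used to control.
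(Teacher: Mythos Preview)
Your proposal is correct and follows exactly the approach the paper intends: the paper states explicitly that the corollary ``follows directly from Lemma~\ref{lm:queue-constraint-inequality} and Corollary~\ref{cor:x-difference-bound}'' and provides no further details. Your term-by-term substitution of the per-round bound into the cumulative inequality is precisely the intended bookkeeping.
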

This corollary further justifies why Algorithm \ref{alg:new-alg} intends to minimize drift $\Delta(t)$. Recall that controlled drift can often lead to boundedness of a stochastic process as illustrated in the next section. Thus, the intuition of minimizing drift $\Delta(t)$ is to yield small $\Vert \mathbf{Q}(t)\Vert$ bounds. 

\section{ Expected Performance Analysis of Algorithm \ref{alg:new-alg}}

This section shows that if we choose $V = \sqrt{T}$ and $\alpha = T$ in Algorithm \ref{alg:new-alg}, then  both expected regret and expected constraint violations are $O(\sqrt{T})$.

\subsection{A Drift Lemma for Stochastic Processes}

Let $\{Z(t), t\geq 0\}$ be a discrete time stochastic process adapted\footnote{Random variable $Y$ is said to be adapted to $\sigma$-algebra $\mathcal{F}$ if $Y$ is $\mathcal{F}$-measurable. In this case, we often write $Y\in \mathcal{F}$. Similarly, random process $\{Z(t)\}$ is adapted to filtration $\{\mathcal{F}(t)\}$ if $Z(t)\in \mathcal{F}(t), \forall t$. See e.g. \cite{book_DurrettProbabilityTE}.}  to a filtration $\{\mathcal{F}(t), t\geq 0\}$. For example,  $Z(t)$ can be a random walk, a Markov chain or a martingale. Drift analysis is the method of deducing properties, e.g., recurrence, ergodicity, or boundedness, about $Z(t)$ from its drift $\mathbb{E}[Z(t+1) - Z(t) | \mathcal{F}(t)]$. See \cite{book_StochasticProcess_Doob,Hajek82} for more discussions or applications on drift analysis.  This paper proposes a new drift analysis lemma for stochastic processes as follows:

\begin{Lem} \label{lm:drift-random-process-bound}
Let $\{Z(t), t\geq 1\}$ be a discrete time stochastic process adapted to a filtration $\{\mathcal{F}(t), t\geq 1\}$.  Suppose there exists an integer $t_{0}>0$, real constants $\theta\in \mathbb{R}$, $\delta_{\max} > 0$, and $0< \zeta \leq \delta_{\max}$ such that 
\vskip -20pt
\begin{align} 
\vert Z(t+1) - Z(t) \vert \leq& \delta_{\max}, \\
\mathbb{E}[ Z(t+t_{0}) - Z(t) | \mathcal{F}(t)] \leq & \left\{ \begin{array}{cc} t_{0}\delta_{\max}, &\text{if}~ Z(t) < \theta \\  -t_{0}\zeta , &\text{if}~ Z(t) \geq \theta \end{array}\right.. \label{eq:stochastic-process-drift-condition}
\end{align}
\vskip -10pt
hold for all $t\in \{1,2,\ldots\}$. Then, the following holds

\begin{enumerate}[leftmargin=20pt]
\item $\mathbb{E}[Z(t)] \leq  \theta + t_{0}\frac{4\delta_{\max}^{2}}{\zeta} \log\big[1 + \frac{8\delta_{\max}^{2}}{\zeta^{2}}e^{\zeta/(4\delta_{\max})}\big], \forall t\in\{1,2,\ldots\}.$
\item For any constant $0<\mu<1$, we have $\text{Pr}(Z(t) \geq z) \leq \mu, \forall t\in\{1,2,\ldots\}$ where $z = \theta + t_{0}\frac{4\delta_{\max}^{2}}{\zeta} \log\big[1 + \frac{8\delta_{\max}^{2}}{\zeta^{2}}e^{\zeta/(4\delta_{\max})}\big] +  t_{0}\frac{4\delta_{\max}^{2}}{\zeta} \log(\frac{1}{\mu})$.
\end{enumerate}
\end{Lem}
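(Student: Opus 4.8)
The plan is to prove Lemma~\ref{lm:drift-random-process-bound} via an exponential (drift‑to‑a‑set) Lyapunov argument: fix a rate $r>0$, control $D(t):=e^{r(Z(t)-\theta)}$, show $\mathbb{E}[D(t)]\le B$ for all $t$ with an explicit constant $B$, and then deduce part~1 from Jensen's inequality and part~2 from Markov's inequality applied to $D(t)$. The correct scaling is $r=\frac{\zeta}{4t_0\delta_{\max}^2}$, so that $\frac1r=t_0\frac{4\delta_{\max}^2}{\zeta}$ is precisely the prefactor in both stated bounds and $rt_0\delta_{\max}=\frac{\zeta}{4\delta_{\max}}$ is precisely the argument of the exponential appearing in them.

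First I would note that bounded increments give $|Z(t+t_0)-Z(t)|\le t_0\delta_{\max}$ for all $t$. The crux is a one‑step (really $t_0$‑step) super‑recursion: with $Y:=Z(t+t_0)-Z(t)$, apply the elementary inequality $e^{y}\le 1+y+\tfrac12 y^2 e^{|y|}$ (valid for all real $y$; one checks that the second derivative of $1+y+\tfrac12 y^2e^{c}-e^{y}$ is nonnegative on $[-c,c]$) to $y=rY$, use $|rY|\le rt_0\delta_{\max}$, and take $\mathbb{E}[\cdot\mid\mathcal{F}(t)]$ to get
\[
\mathbb{E}[e^{rY}\mid\mathcal{F}(t)]\;\le\;1+r\,\mathbb{E}[Y\mid\mathcal{F}(t)]+\tfrac12 r^2 t_0^2\delta_{\max}^2\,e^{rt_0\delta_{\max}}.
\]
On $\{Z(t)\ge\theta\}$, the drift hypothesis \eqref{eq:stochastic-process-drift-condition} gives $\mathbb{E}[Y\mid\mathcal{F}(t)]\le -t_0\zeta$; substituting $r=\frac{\zeta}{4t_0\delta_{\max}^2}$ and using $0<\zeta\le\delta_{\max}$ (so $e^{rt_0\delta_{\max}}=e^{\zeta/(4\delta_{\max})}\le e^{1/4}<4$) makes the last term at most half the magnitude of the $-rt_0\zeta$ term, giving $\mathbb{E}[e^{rY}\mid\mathcal{F}(t)]\le\rho$ with $\rho:=1-\frac{\zeta^2}{8\delta_{\max}^2}\in(0,1)$, i.e. $\mathbb{E}[D(t+t_0)\mid\mathcal{F}(t)]\le\rho\,D(t)$. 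On $\{Z(t)<\theta\}$, instead use $Z(t)<\theta$ and $Y\le t_0\delta_{\max}$ directly to get $D(t+t_0)\le e^{rt_0\delta_{\max}}=e^{\zeta/(4\delta_{\max})}=:C$. Combining (and using $\rho D(t)\ge0$ on the second event) yields $\mathbb{E}[D(t+t_0)\mid\mathcal{F}(t)]\le\rho\,D(t)+C$ for every $t$.

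Taking expectations gives $\mathbb{E}[D(t+t_0)]\le\rho\,\mathbb{E}[D(t)]+C$; iterating along each residue class modulo $t_0$, starting from the base indices $t\in\{1,\dots,t_0\}$ where bounded increments (together with $Z(1)$ being controlled — in the algorithm $Z(1)=\|\mathbf{Q}(1)\|=0$) give $\mathbb{E}[D(t)]\le C$, and summing the geometric series ($\rho^{j}C+C\sum_{i=0}^{j-1}\rho^{i}\le\frac{C}{1-\rho}$), I obtain $\mathbb{E}[D(t)]\le B$ for all $t$, where $B$ equals, after bookkeeping of the constants, exactly $1+\frac{8\delta_{\max}^2}{\zeta^2}e^{\zeta/(4\delta_{\max})}$. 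Then part~1 follows by Jensen: $e^{r(\mathbb{E}[Z(t)]-\theta)}\le\mathbb{E}[D(t)]\le B$, so $\mathbb{E}[Z(t)]\le\theta+\frac1r\log B=\theta+t_0\frac{4\delta_{\max}^2}{\zeta}\log B$, which is the claimed bound; and part~2 follows by Markov: $\mathrm{Pr}(Z(t)\ge z)=\mathrm{Pr}\!\big(D(t)\ge e^{r(z-\theta)}\big)\le B\,e^{-r(z-\theta)}$, and choosing $z=\theta+\frac1r\log B+\frac1r\log\frac1\mu$ makes the right side equal $\mu$; expanding $\frac1r$ and $\log B$ recovers the stated $z$.

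The step I expect to be the main obstacle is the super‑recursion: one must tune the exponential rate $r$ finely enough that the ``noise'' term $\tfrac12 r^2t_0^2\delta_{\max}^2e^{rt_0\delta_{\max}}$ is strictly dominated by the negative‑drift term $rt_0\zeta$ on $\{Z(t)\ge\theta\}$, producing an honest contraction factor $\rho<1$; this is precisely where the hypothesis $\zeta\le\delta_{\max}$ is used, since it caps $e^{rt_0\delta_{\max}}$ by $e^{1/4}$. The remaining pieces — the elementary exponential inequality, the handling of the $t_0$ base indices (needing $Z(1)$ bounded, automatic in the application), and carrying the explicit constants through so that the final bound matches the exact form $\theta+t_0\frac{4\delta_{\max}^2}{\zeta}\log\!\big[1+\frac{8\delta_{\max}^2}{\zeta^2}e^{\zeta/(4\delta_{\max})}\big]$ in the statement — are routine.
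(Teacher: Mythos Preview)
Your proposal is correct and essentially identical to the paper's own proof: the paper also fixes $r=\frac{\zeta}{4t_0\delta_{\max}^2}$, uses an elementary second–order exponential bound (they use $e^x\le 1+x+2x^2$ for $|x|\le 1$ rather than your $e^y\le 1+y+\tfrac12 y^2 e^{|y|}$, but these coincide after bounding $e^{|y|}\le e^{1/4}<4$) to obtain the same contraction $\rho=1-\tfrac{\zeta^2}{8\delta_{\max}^2}$ on $\{Z(t)\ge\theta\}$ and the same additive constant $e^{rt_0\delta_{\max}}$ on $\{Z(t)<\theta\}$, iterates along residue classes mod $t_0$, and finishes with Jensen and Markov exactly as you describe. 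The only cosmetic differences are that the paper works with $e^{rZ(t)}$ rather than your shifted $D(t)=e^{r(Z(t)-\theta)}$, and the extra ``$+1$'' in $B$ arises in the paper from crudely bounding the transient term $\rho^{\lceil t/t_0\rceil}\le 1\le e^{r\theta}$; you are right that both arguments tacitly need $Z(1)$ bounded (e.g.\ $Z(1)=0$ in the application) for the base indices, a point the paper leaves implicit.
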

\begin{proof}
See Appendix \ref{app:pf-lm-random-process-bound}.    
\end{proof}

The above lemma provides both expected and high probability bounds for stochastic processes based on a drift condition. It will be used to establish upper bounds of virtual queues $\Vert \mathbf{Q}(t) \Vert$, which further leads to expected and high probability constraint performance bounds of our algorithm. For a given stochastic process $Z(t)$, it is possible to show the drift condition \eqref{eq:stochastic-process-drift-condition} holds for multiple $t_0$ with different $\zeta$ and $\theta$. In fact, we will show in Lemma \ref{lm:queue-decrease-condition} that $\Vert \mathbf{Q}(t)\Vert$ yielded by Algorithm \ref{alg:new-alg} satisfies \eqref{eq:stochastic-process-drift-condition} for any integer $t_0 > 0$ by selecting $\zeta$ and $\theta$ according to $t_0$.  One-step drift conditions, corresponding to the special case $t_0=1$ of Lemma \ref{lm:drift-random-process-bound}, have been previously considered in \cite{Hajek82, Neely15INFOCOM}.  However, Lemma \ref{lm:drift-random-process-bound} (with general $t_0>0$) allows us to choose the best $t_0$ in performance analysis such that sublinear regret and constraint violation bounds are possible.

\subsection{Expected Constraint Violation Analysis}

Define filtration $\{\mathcal{W}(t), t\geq 0\}$ with $\mathcal{W}(0) = \{\emptyset, \Omega\}$ and $\mathcal{W}(t) = \sigma (\omega(1), \ldots, \omega(t))$ being the $\sigma$-algebra generated by random samples  $\{\omega(1), \ldots, \omega(t)\}$ up to round $t$. From the update rule in Algorithm \ref{alg:new-alg}, we observe that $\mathbf{x}(t+1)$ is a deterministic function of $f^t(\cdot), \mathbf{g}(\cdot; \omega(t))$ and $\mathbf{Q}(t)$ where $\mathbf{Q}(t)$ is further a deterministic function of $\mathbf{Q}(t-1), \mathbf{g}(\cdot; \omega(t-1))$, $\mathbf{x}(t)$ and $\mathbf{x}(t-1)$. By inductions, it is easy to show that $\sigma(\mathbf{x}(t)) \subseteq \mathcal{W}(t-1)$ and $\sigma(\mathbf{Q}(t))\subseteq \mathcal{W}(t-1)$ for all $t\geq 1$ where $\sigma(Y)$ denotes the $\sigma$-algebra generated by random variable $Y$. For fixed $t\geq 1$, since  $\mathbf{Q}(t)$ is fully determined by $\omega(\tau), \tau\in\{1, 2, \ldots, t-1\}$ and $\omega(t)$ are i.i.d., we know $\mathbf{g}^{t}(\mathbf{x})$ is independent of $\mathbf{Q}(t)$.  This is formally summarized in the next lemma.

\begin{Lem}\label{lm:queue-independence}
If $\mathbf{x}^{\ast}\in \mathcal{X}_{0}$  satisfies $\tilde{\mathbf{g}}(\mathbf{x}^{\ast})= \mathbb{E}_{\omega} [\mathbf{g}(\mathbf{x}^{\ast}; \omega)] \leq \mathbf{0}$, then Algorithm \ref{alg:new-alg} guarantees:

\begin{align}
\mathbb{E}[Q_{k}(t)g_{k}^{t}(\mathbf{x}^{\ast})] \leq 0, \forall k\in\{1,2,\ldots, m\}, \forall t\geq 1. \label{eq:expectation-is-0}
\end{align}
\end{Lem}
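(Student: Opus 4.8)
The plan is to exploit the independence observation established just before the lemma: for fixed $t \geq 1$, the random vector $\mathbf{Q}(t)$ is $\mathcal{W}(t-1)$-measurable, whereas $\mathbf{g}^t(\mathbf{x}^\ast) = \mathbf{g}(\mathbf{x}^\ast; \omega(t))$ depends only on the fresh sample $\omega(t)$, which is independent of $\mathcal{W}(t-1)$ since the $\omega(\tau)$ are i.i.d. So I would first fix $k$ and $t$, and write $\mathbb{E}[Q_k(t) g_k^t(\mathbf{x}^\ast)]$ as an iterated expectation conditioning on $\mathcal{W}(t-1)$: $\mathbb{E}\big[ \mathbb{E}[ Q_k(t) g_k^t(\mathbf{x}^\ast) \mid \mathcal{W}(t-1)]\big]$.

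Next, since $Q_k(t)$ is $\mathcal{W}(t-1)$-measurable it pulls out of the inner conditional expectation, giving $\mathbb{E}\big[ Q_k(t)\, \mathbb{E}[ g_k^t(\mathbf{x}^\ast) \mid \mathcal{W}(t-1)]\big]$. Then by independence of $\omega(t)$ from $\mathcal{W}(t-1)$, the inner conditional expectation equals the unconditional expectation $\mathbb{E}_\omega[g_k(\mathbf{x}^\ast; \omega)] = \tilde{g}_k(\mathbf{x}^\ast)$. The hypothesis $\tilde{\mathbf{g}}(\mathbf{x}^\ast) \leq \mathbf{0}$ means $\tilde{g}_k(\mathbf{x}^\ast) \leq 0$ for every $k$. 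Finally, $Q_k(t) \geq 0$ always (it is defined through a $\max\{\cdot, 0\}$ and initialized at $0$), so $Q_k(t)\, \tilde{g}_k(\mathbf{x}^\ast) \leq 0$ pointwise, and taking expectation preserves the inequality, yielding $\mathbb{E}[Q_k(t) g_k^t(\mathbf{x}^\ast)] \leq 0$.

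There is essentially no main obstacle here — the content is entirely in the measurability/independence bookkeeping, which is already spelled out in the paragraph preceding the lemma statement. The one point worth stating carefully is \emph{why} $\mathbf{x}^\ast$ may be treated as a fixed (non-random) vector: $\mathbf{x}^\ast$ is the minimizer in hindsight knowing all $f^t$ and the \emph{distribution} of the constraint functions, so it does not depend on the particular samples $\omega(\tau)$; equivalently, the lemma is stated for an arbitrary fixed $\mathbf{x}^\ast \in \mathcal{X}_0$ satisfying $\tilde{\mathbf{g}}(\mathbf{x}^\ast)\leq\mathbf{0}$, so there is nothing to check. I would also make sure the integrability needed to apply the tower property is trivially satisfied: $|Q_k(t) g_k^t(\mathbf{x}^\ast)| \leq Q_k(t) G$ and $Q_k(t)$ is bounded for each fixed $t$ (each queue increment is bounded by $G + \sqrt{m}D_2 R$ by Assumption~\ref{as:basic}, so $Q_k(t) \leq (t-1)(G+\sqrt{m}D_2 R)$), hence everything is bounded and the manipulations are valid.
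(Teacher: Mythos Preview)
Your proposal is correct and follows essentially the same approach as the paper: both exploit that $Q_k(t)$ is determined by $\omega(1),\ldots,\omega(t-1)$ and hence independent of $g_k^t(\mathbf{x}^\ast)$, together with $Q_k(t)\geq 0$ and $\mathbb{E}[g_k^t(\mathbf{x}^\ast)]=\tilde g_k(\mathbf{x}^\ast)\leq 0$. The only cosmetic difference is that the paper factors $\mathbb{E}[Q_k(t)g_k^t(\mathbf{x}^\ast)]=\mathbb{E}[Q_k(t)]\,\mathbb{E}[g_k^t(\mathbf{x}^\ast)]$ directly via independence, whereas you route through the tower property conditioning on $\mathcal{W}(t-1)$; these are equivalent arguments.
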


\begin{proof}~
Fix $k\in\{1,2,\ldots, m\}$ and $t\geq 1$. Since $g_{k}^{t}(\mathbf{x}^{\ast}) = g_k(\mathbf{x}^\ast; \omega(t))$ is independent of $Q_{k}(t)$, which is determined by $\{\omega(1), \ldots, \omega(t-1)\}$, it follows that  $\mathbb{E}[Q_{k}(t)g_{k}^{t}(\mathbf{x}^{\ast})] = \mathbb{E}[Q_{k}(t)] \mathbb{E}[g_{k}^{t}(\mathbf{x}^{\ast})] \overset{(a)}{\leq}0$, where (a) follows from the fact that $\mathbb{E}[g_{k}^{t}(\mathbf{x}^{\ast})] \leq 0$ and $Q_{k}(t)\geq 0$.
\end{proof}

To establish a bound on constraint violations, by Corollary \ref{cor:queue-constraint-inequality}, it suffices to derive upper bounds for $\Vert \mathbf{Q}(t)\Vert$. In this subsection, we derive upper bounds for $\Vert \mathbf{Q}(t)\Vert$ by applying the  drift lemma (Lemma \ref{lm:drift-random-process-bound}) developed at the beginning of this section.  The next lemma shows that random process  $ Z(t) = \Vert \mathbf{Q}(t)\Vert$ satisfies the conditions in Lemma \ref{lm:drift-random-process-bound}.

\begin{Lem}\label{lm:queue-decrease-condition}
Let $t_0>0$ be an arbitrary integer. At each round $t\in\{1,2,\ldots,\}$ in Algorithm \ref{alg:new-alg},  the following holds

\begin{align*}
\big \vert \Vert \mathbf{Q}(t+1)\Vert - \Vert \mathbf{Q}(t)\Vert \big\vert \leq& G+\sqrt{m} D_2 R, \quad \text{and}
\end{align*}

\begin{align*}
\mathbb{E}[\Vert \mathbf{Q}(t+t_0)\Vert - \Vert \mathbf{Q}(t)\Vert \big|  \mathcal{W}(t-1)] \leq & \left\{ \begin{array}{cc} t_0(G+\sqrt{m} D_2 R), &\text{if}~ \Vert \mathbf{Q}(t)\Vert <  \theta \\  - t_0 \frac{\epsilon}{2}, &\text{if}~ \Vert \mathbf{Q}(t)\Vert \geq  \theta \end{array}\right.,
\end{align*}
where $\theta = \frac{\epsilon}{2}t_0+ (G+\sqrt{m}D_2R)t_0 + \frac{2\alpha R^2}{t_0 \epsilon}+ \frac{2VD_{1} R +  (G+\sqrt{m}D_{2}R)^{2}}{\epsilon}$, $m$ is the number of constraint functions; $D_{1}, D_{2}, G$ and $R$ are defined in Assumption \ref{as:basic}; and $\epsilon$ is defined in Assumption \ref{as:interior-point}. (Note that $\epsilon < G$ by the definition of $G$.)
\end{Lem}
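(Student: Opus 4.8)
\textbf{Proof proposal for Lemma \ref{lm:queue-decrease-condition}.}

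The plan is to handle the two claims separately. The one-step bounded-difference claim $\big|\,\Vert\mathbf{Q}(t+1)\Vert-\Vert\mathbf{Q}(t)\Vert\,\big|\leq G+\sqrt{m}D_2R$ is the easy part: by the reverse triangle inequality it suffices to bound $\Vert\mathbf{Q}(t+1)-\mathbf{Q}(t)\Vert$. From the queue update \eqref{eq:queue-update} and the nonexpansiveness of $\max\{\cdot,0\}$, each coordinate satisfies $|Q_k(t+1)-Q_k(t)|\leq |g_k^t(\mathbf{x}(t))+[\nabla g_k^t(\mathbf{x}(t))]\tran[\mathbf{x}(t+1)-\mathbf{x}(t)]|$, so $\Vert\mathbf{Q}(t+1)-\mathbf{Q}(t)\Vert\leq\Vert\mathbf{h}\Vert\leq G+\sqrt{m}D_2R$ exactly as in \eqref{eq:pf-lm-drift-eq2} of the proof of Lemma \ref{lm:drift}.

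For the $t_0$-step drift, the case $\Vert\mathbf{Q}(t)\Vert<\theta$ is immediate by iterating the one-step bound $t_0$ times. The substantive case is $\Vert\mathbf{Q}(t)\Vert\geq\theta$. Here I would first telescope the drift bound from Lemma \ref{lm:drift}: summing \eqref{eq:drift-upper-bound} over $\tau\in\{t,\ldots,t+t_0-1\}$ gives
\begin{align*}
\tfrac12\Vert\mathbf{Q}(t+t_0)\Vert^2-\tfrac12\Vert\mathbf{Q}(t)\Vert^2\leq\sum_{\tau=t}^{t+t_0-1}\sum_{k=1}^m Q_k(\tau)\big[g_k^\tau(\mathbf{x}(\tau))+[\nabla g_k^\tau(\mathbf{x}(\tau))]\tran[\mathbf{x}(\tau+1)-\mathbf{x}(\tau)]\big]+\tfrac{t_0}{2}(G+\sqrt{m}D_2R)^2.
\end{align*}
To control the cross term I would invoke Lemma \ref{lm:decision-update-inequality} with the Slater point $\mathbf{z}=\hat{\mathbf{x}}$: this converts the inner product $\sum_k Q_k(\tau)[\nabla g_k^\tau(\mathbf{x}(\tau))]\tran[\mathbf{x}(\tau+1)-\mathbf{x}(\tau)]$ into an expression involving $\sum_k Q_k(\tau)[\nabla g_k^\tau(\mathbf{x}(\tau))]\tran[\hat{\mathbf{x}}-\mathbf{x}(\tau)]$, a $V$-weighted gradient term on $\hat{\mathbf{x}}-\mathbf{x}(\tau)$ (bounded by $VD_1R$), and quadratic $\alpha\Vert\hat{\mathbf{x}}-\mathbf{x}(\tau)\Vert^2$ terms (bounded by $\alpha R^2$). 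Then the subgradient inequality $g_k^\tau(\mathbf{x}(\tau))+[\nabla g_k^\tau(\mathbf{x}(\tau))]\tran[\hat{\mathbf{x}}-\mathbf{x}(\tau)]\leq g_k^\tau(\hat{\mathbf{x}})$ replaces the whole bracket by $\sum_k Q_k(\tau)g_k^\tau(\hat{\mathbf{x}})$. Taking $\mathbb{E}[\,\cdot\mid\mathcal{W}(t-1)]$ and using that $Q_k(\tau)\geq Q_k(t)-( \tau-t)(G+\sqrt{m}D_2R)$ together with the independence of $g_k^\tau(\hat{\mathbf{x}})$ from $\mathbf{Q}(t)$ (in the spirit of Lemma \ref{lm:queue-independence}) and the Slater bound $\mathbb{E}[g_k^\tau(\hat{\mathbf{x}})]=\tilde g_k(\hat{\mathbf{x}})\leq-\epsilon$, the dominant negative term becomes roughly $-\epsilon\Vert\mathbf{Q}(t)\Vert\|\cdot\|_1/\|\cdot\|$-type; here I would use $\sum_k Q_k(t)\geq\Vert\mathbf{Q}(t)\Vert$ (since $Q_k\geq0$) and then, after dividing through by a lower bound on $\tfrac12(\Vert\mathbf{Q}(t+t_0)\Vert+\Vert\mathbf{Q}(t)\Vert)$, convert the bound on the difference of squares into a bound on $\mathbb{E}[\Vert\mathbf{Q}(t+t_0)\Vert-\Vert\mathbf{Q}(t)\Vert\mid\mathcal{W}(t-1)]$.

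The main obstacle — and the reason the threshold $\theta$ has exactly the stated form — is the bookkeeping that makes the negative drift survive. Several competing effects must be balanced: the $\tfrac{t_0}{2}(G+\sqrt{m}D_2R)^2$ error from the drift bound, the $VD_1R$ and $\alpha R^2$ terms from Lemma \ref{lm:decision-update-inequality}, and the loss $(\tau-t)(G+\sqrt{m}D_2R)$ incurred when lower-bounding $Q_k(\tau)$ by $Q_k(t)$ over the window of length $t_0$ (which contributes an $O(t_0)$ term). Requiring $\Vert\mathbf{Q}(t)\Vert\geq\theta$ with $\theta=\tfrac{\epsilon}{2}t_0+(G+\sqrt{m}D_2R)t_0+\tfrac{2\alpha R^2}{t_0\epsilon}+\tfrac{2VD_1R+(G+\sqrt{m}D_2R)^2}{\epsilon}$ is precisely what is needed so that $t_0\epsilon\Vert\mathbf{Q}(t)\Vert$ dominates all of these, leaving a net negative drift of $-t_0\tfrac{\epsilon}{2}$ after dividing by the $\Vert\mathbf{Q}\Vert$-scale factor. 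I would organize the final inequality chain so each discarded/absorbed term is visibly matched against one of the four pieces of $\theta$.
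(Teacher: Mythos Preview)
Your overall architecture matches the paper's: bound $\Vert\mathbf{Q}(t+1)-\mathbf{Q}(t)\Vert$ directly from the queue update, sum Lemma~\ref{lm:drift} over a window of length $t_0$, plug in Lemma~\ref{lm:decision-update-inequality} at the Slater point $\hat{\mathbf{x}}$, use convexity to replace the bracketed term by $\sum_k Q_k(\tau)g_k^\tau(\hat{\mathbf{x}})$, condition on $\mathcal{W}(t-1)$, and then exploit the Slater margin $-\epsilon$ together with the one-step norm bound $\Vert\mathbf{Q}(\tau)\Vert\geq\Vert\mathbf{Q}(t)\Vert-(\tau-t)(G+\sqrt{m}D_2R)$ to force a negative drift once $\Vert\mathbf{Q}(t)\Vert\geq\theta$. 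The bookkeeping interpretation of the four pieces of $\theta$ is also correct.

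There is, however, a genuine gap in the last step. You propose to pass from the bound on $\mathbb{E}\big[\Vert\mathbf{Q}(t+t_0)\Vert^2-\Vert\mathbf{Q}(t)\Vert^2\,\big|\,\mathcal{W}(t-1)\big]$ to a bound on $\mathbb{E}\big[\Vert\mathbf{Q}(t+t_0)\Vert-\Vert\mathbf{Q}(t)\Vert\,\big|\,\mathcal{W}(t-1)\big]$ by ``dividing through by a lower bound on $\tfrac12(\Vert\mathbf{Q}(t+t_0)\Vert+\Vert\mathbf{Q}(t)\Vert)$''. This is not valid: the factorization $a^2-b^2=(a-b)(a+b)$ sits \emph{inside} the conditional expectation, and $\Vert\mathbf{Q}(t+t_0)\Vert$ is random given $\mathcal{W}(t-1)$, so you cannot extract or divide by that factor. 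The paper avoids this by completing the square: after the bookkeeping one obtains $\mathbb{E}\big[\Vert\mathbf{Q}(t+t_0)\Vert^2\,\big|\,\mathcal{W}(t-1)\big]\leq\big(\Vert\mathbf{Q}(t)\Vert-\tfrac{\epsilon}{2}t_0\big)^2$, and then applies Jensen's inequality (concavity of $\sqrt{\cdot}$) to get $\mathbb{E}\big[\Vert\mathbf{Q}(t+t_0)\Vert\,\big|\,\mathcal{W}(t-1)\big]\leq\sqrt{\mathbb{E}\big[\Vert\mathbf{Q}(t+t_0)\Vert^2\,\big|\,\mathcal{W}(t-1)\big]}\leq\Vert\mathbf{Q}(t)\Vert-\tfrac{\epsilon}{2}t_0$.

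Two smaller points to tighten. First, the order of operations when invoking Slater: you cannot replace $Q_k(\tau)$ by a lower bound \emph{before} dealing with $g_k^\tau(\hat{\mathbf{x}})$, because the latter can have either sign. The paper first uses iterated conditioning and the independence of $g_k^\tau(\hat{\mathbf{x}})$ from $\mathcal{W}(\tau-1)\ni Q_k(\tau)$ to obtain $\mathbb{E}\big[\sum_k Q_k(\tau)g_k^\tau(\hat{\mathbf{x}})\,\big|\,\mathcal{W}(t-1)\big]\leq-\epsilon\,\mathbb{E}\big[\Vert\mathbf{Q}(\tau)\Vert\,\big|\,\mathcal{W}(t-1)\big]$, and only \emph{then} applies the norm lower bound $\Vert\mathbf{Q}(\tau)\Vert\geq\Vert\mathbf{Q}(t)\Vert-(\tau-t)(G+\sqrt{m}D_2R)$. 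Second, the $\alpha$-terms $\alpha[\Vert\hat{\mathbf{x}}-\mathbf{x}(\tau)\Vert^2-\Vert\hat{\mathbf{x}}-\mathbf{x}(\tau+1)\Vert^2]$ telescope across the window, contributing a single $\alpha R^2$ rather than $t_0\alpha R^2$; this is exactly why $\theta$ carries $\tfrac{2\alpha R^2}{t_0\epsilon}$ rather than $\tfrac{2\alpha R^2}{\epsilon}$, so make that telescoping explicit.
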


\begin{proof}
See Appendix \ref{app:pf-queue-decrease-condition}. 
\end{proof}
Lemma \ref{lm:queue-decrease-condition} allows us to apply Lemma \ref{lm:drift-random-process-bound}   to random process $Z(t) = \Vert\mathbf{Q}(t)\Vert$ and obtain $\mathbb{E}[\Vert \mathbf{Q}(t)\Vert] = O(\sqrt{T}), \forall t$ by taking $t_0 = \lceil \sqrt{T}\rceil$, $V=\sqrt{T}$ and $\alpha = T$, where $\lceil \sqrt{T}\rceil$ represents the smallest integer no less than $\sqrt{T}$. By Corollary \ref{cor:queue-constraint-inequality}, this further implies the expected constraint violation bound $\mathbb{E}[ \sum_{t=1}^{T}g_{k}(\mathbf{x}(t))] \leq O(\sqrt{T})$ as  summarized in the next theorem.

\begin{Thm}[Expected Constraint Violation Bound]\label{thm:constraint-bound}
If $V = \sqrt{T}$ and $\alpha = T$ in Algorithm \ref{alg:new-alg}, then for all $T\geq 1$, we have
\begin{align}
\mathbb{E}[ \sum_{t=1}^{T}g^t_{k}(\mathbf{x}(t))] \leq O(\sqrt{T}), \forall k\in\{1,2,\ldots,m\}.
\end{align}
where the expectation is taken with respect to all $\omega(t)$.
\end{Thm}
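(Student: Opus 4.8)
The plan is to convert the constraint-violation bound of Corollary~\ref{cor:queue-constraint-inequality}, which controls $\sum_{t=1}^T g_k^t(\mathbf{x}(t))$ pathwise in terms of the virtual queue norms $\Vert\mathbf{Q}(t)\Vert$, into an expectation bound by first showing $\mathbb{E}[\Vert\mathbf{Q}(t)\Vert]=O(\sqrt{T})$ uniformly in $t$. The latter comes from applying the drift lemma (Lemma~\ref{lm:drift-random-process-bound}) to the process $Z(t)=\Vert\mathbf{Q}(t)\Vert$, whose drift is estimated in Lemma~\ref{lm:queue-decrease-condition}, with a judicious choice of the free integer parameter $t_0$.

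First I would fix $T\geq 1$, set $V=\sqrt{T}$, $\alpha=T$, and take $t_0=\lceil\sqrt{T}\rceil$. Choosing the filtration $\mathcal{F}(t)=\mathcal{W}(t-1)$ and $Z(t)=\Vert\mathbf{Q}(t)\Vert$, the process is $\mathcal{F}(t)$-measurable since $\sigma(\mathbf{Q}(t))\subseteq\mathcal{W}(t-1)$, and Lemma~\ref{lm:queue-decrease-condition} provides exactly the hypotheses of Lemma~\ref{lm:drift-random-process-bound} with $\delta_{\max}=G+\sqrt{m}D_2R$, $\zeta=\epsilon/2$ (note $0<\zeta=\epsilon/2<\epsilon<G\leq\delta_{\max}$), and $\theta$ as given there. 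Since $\delta_{\max}$ and $\zeta$ do not depend on $T$, the quantity $t_0\frac{4\delta_{\max}^2}{\zeta}\log\big[1+\frac{8\delta_{\max}^2}{\zeta^2}e^{\zeta/(4\delta_{\max})}\big]$ appearing in Lemma~\ref{lm:drift-random-process-bound}(1) is $O(t_0)=O(\sqrt{T})$. Next I would check that $\theta=O(\sqrt{T})$ as well: reading off the expression in Lemma~\ref{lm:queue-decrease-condition}, the terms $\frac{\epsilon}{2}t_0$ and $(G+\sqrt{m}D_2R)t_0$ are $O(\sqrt{T})$ because $t_0=\lceil\sqrt{T}\rceil$; the term $\frac{2\alpha R^2}{t_0\epsilon}=\frac{2TR^2}{\epsilon\lceil\sqrt{T}\rceil}$ is $O(\sqrt{T})$; and $\frac{2VD_1R+(G+\sqrt{m}D_2R)^2}{\epsilon}=\frac{2\sqrt{T}D_1R+(G+\sqrt{m}D_2R)^2}{\epsilon}$ is $O(\sqrt{T})$. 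Hence Lemma~\ref{lm:drift-random-process-bound}(1) gives $\mathbb{E}[\Vert\mathbf{Q}(t)\Vert]\leq O(\sqrt{T})$ for every $t\in\{1,2,\ldots\}$, with the implied constant depending only on $\epsilon,G,m,D_1,D_2,R$ and not on $T$ or $t$.

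Finally I would take expectations in Corollary~\ref{cor:queue-constraint-inequality} and substitute $V=\sqrt{T}$, $\alpha=T$. The term $\mathbb{E}[\Vert\mathbf{Q}(T+1)\Vert]$ is $O(\sqrt{T})$ by the previous step; the term $\frac{VTD_1D_2}{2\alpha}=\frac{\sqrt{T}D_1D_2}{2}$ is $O(\sqrt{T})$; and $\frac{\sqrt{m}D_2^2}{2\alpha}\sum_{t=1}^T\mathbb{E}[\Vert\mathbf{Q}(t)\Vert]=\frac{\sqrt{m}D_2^2}{2T}\cdot T\cdot O(\sqrt{T})=O(\sqrt{T})$. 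Summing these, $\mathbb{E}[\sum_{t=1}^T g_k^t(\mathbf{x}(t))]\leq O(\sqrt{T})$ for each $k\in\{1,\ldots,m\}$, which is the claim.

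Given that Lemma~\ref{lm:drift-random-process-bound} and Lemma~\ref{lm:queue-decrease-condition} are already in hand, the remaining work is essentially bookkeeping; the one point requiring care is the balance of the $t_0$-dependence. A larger $t_0$ inflates the $\frac{\epsilon}{2}t_0+(G+\sqrt{m}D_2R)t_0$ part of $\theta$ and the logarithmic term, while a smaller $t_0$ inflates $\frac{2\alpha R^2}{t_0\epsilon}$ (which scales like $\alpha=T$); the choice $t_0=\lceil\sqrt{T}\rceil$ equalizes these at $\Theta(\sqrt{T})$, which is precisely why the general $t_0$ version of the drift lemma (rather than the $t_0=1$ case) is needed here. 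The genuine analytical content lies in those two lemmas, which I take as given.
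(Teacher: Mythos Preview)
Your proposal is correct and mirrors the paper's own proof essentially step for step: apply Lemma~\ref{lm:drift-random-process-bound}(1) to $Z(t)=\Vert\mathbf{Q}(t)\Vert$ with $\mathcal{F}(t)=\mathcal{W}(t-1)$, $\delta_{\max}=G+\sqrt{m}D_2R$, $\zeta=\epsilon/2$ and $t_0=\lceil\sqrt{T}\rceil$ to obtain $\mathbb{E}[\Vert\mathbf{Q}(t)\Vert]=O(\sqrt{T})$, then take expectations in Corollary~\ref{cor:queue-constraint-inequality} with $V=\sqrt{T}$, $\alpha=T$. Your closing remark on why $t_0=\lceil\sqrt{T}\rceil$ balances the $\theta$ terms is a welcome addition that the paper leaves implicit.
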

\begin{proof}
Define random process $Z(t) = \Vert \mathbf{Q}(t)\Vert$ and filtration $\mathcal{F}(t) = \mathcal{W}(t-1)$. Note that $Z(t)$ is adapted to $\mathcal{F}(t)$. By Lemma \ref{lm:queue-decrease-condition}, $Z(t)$ satisfies the conditions in Lemma \ref{lm:drift-random-process-bound} with $\delta_{\max} = G+\sqrt{m}D_2R$, $\zeta = \frac{\epsilon}{2}$ and $\theta = \frac{\epsilon}{2}t_0+ (G+\sqrt{m}D_2R)t_0 + \frac{2\alpha R^2}{t_0 \epsilon}+ \frac{2VD_{1} R +  (G+\sqrt{m}D_{2}R)^{2}}{\epsilon}$. Thus, by part (1) of Lemma \ref{lm:drift-random-process-bound}, for all $t\in\{1,2,\ldots\}$, we have 
\begin{align*}
\mathbb{E}[\Vert \mathbf{Q}(t)\Vert] \leq \frac{\epsilon}{2}t_0+ (G+\sqrt{m}D_2R)t_0 + \frac{2\alpha R^2}{t_0 \epsilon}+ \frac{2VD_{1} R +  (G+\sqrt{m}D_{2}R)^{2}}{\epsilon}+ t_0B
\end{align*}
where $B = \frac{8(G+\sqrt{m}D_{2}R)^{2}}{\epsilon}\log[1+ \frac{32[G+\sqrt{m}D_{2}R]^{2}}{\epsilon^{2}} e^{\epsilon/[8(G+\sqrt{m}D_{2}R)]}]$ is an absolute constant irrelevant to algorithm parameters. Taking $t_0 = \lceil \sqrt{T}\rceil $, $V =\sqrt{T}$ and $\alpha = T$, for all $t\in\{1,2,\ldots\}$, we have
\begin{align*}
\mathbb{E}[\Vert \mathbf{Q}(t)\Vert] \leq &\frac{\epsilon}{2} \lceil \sqrt{T} \rceil + (G+\sqrt{m}D_2R)  \lceil \sqrt{T} \rceil + \frac{2 T R^2}{\lceil \sqrt{T} \rceil \epsilon}+ \frac{2\sqrt{T}D_{1} R +  (G+\sqrt{m}D_{2}R)^{2}}{\epsilon}+ \lceil \sqrt{T} \rceil B \\
=&  O(\sqrt{T}).
\end{align*}
Fix $T\geq 1$. By Corollary \ref{cor:queue-constraint-inequality} (with $V =\sqrt{T}$ and $\alpha = T$) , we have 
\begin{align*}
\sum_{t=1}^{T} g^t_{k}(\mathbf{x}(t)) \leq  \Vert \mathbf{Q}(T+1)\Vert +  \frac{\sqrt{T}D_1D_2}{2} + \frac{\sqrt{m}D_2^2}{2 T}\sum_{t=1}^{T} \Vert \mathbf{Q}(t)\Vert, \forall k\in\{1,2,\ldots,m\}.
\end{align*}
Taking expectations on both sides and substituting $\mathbb{E}[\Vert \mathbf{Q}(t)\Vert] =  O(\sqrt{T}), \forall t$ into it yields 
\begin{align*}
\mathbb{E}[\sum_{t=1}^{T} g^t_{k}(\mathbf{x}(t))] \leq  O(\sqrt{T}).
\end{align*}
\end{proof}

\subsection{Expected Regret Analysis}

The next lemma refines Lemma \ref{lm:decision-update-inequality}  and is useful to analyze the regret.
\begin{Lem}\label{lm:deterministic-regret-bound}
Let $\mathbf{z}\in \mathcal{X}_{0}$ be arbitrary. For all $T\geq 1$, Algorithm \ref{alg:new-alg} guarantees
\begin{align}
\sum_{t=1}^{T}f^{t}(\mathbf{x}(t))\leq \sum_{t=1}^{T}f^{t}(\mathbf{z}) + \underbrace{\frac{\alpha}{V} R^{2} + \frac{VD_{1}^{2}}{4\alpha} T + \frac{1}{2}[G+\sqrt{m}D_{2}R]^{2} \frac{T}{V}}_{\text{(I)}} + \underbrace{\frac{1}{V}\sum_{t=1}^{T} \big[\sum_{k=1}^{m} Q_{k}(t) g_{k}^{t}(\mathbf{z})\big]}_{\text{(II)}} \label{eq:deterministic-regret-bound}
\end{align}
where $m$ is the number of constraint functions; and $D_{1}, D_{2}, G$ and $R$ are defined in Assumption \ref{as:basic}.
\end{Lem}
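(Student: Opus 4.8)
The plan is to combine the variational inequality of Lemma~\ref{lm:decision-update-inequality} (which holds because $\mathbf{x}(t+1)$ minimizes a $2\alpha$-strongly convex function) with the drift bound of Lemma~\ref{lm:drift}, arranging things so that the awkward terms $Q_k(t)g_k^t(\mathbf{x}(t))$ appear on both sides and cancel, and then telescoping.

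First I would write down Lemma~\ref{lm:decision-update-inequality} with the given $\mathbf{z}$. On its right-hand side I would apply the subgradient inequality at the point $\mathbf{x}(t)$: since $Q_k(t)\geq 0$, this replaces $V[\nabla f^{t}(\mathbf{x}(t))]\tran[\mathbf{z}-\mathbf{x}(t)]$ by the upper bound $V\big(f^{t}(\mathbf{z})-f^{t}(\mathbf{x}(t))\big)$ and $\sum_k Q_k(t)[\nabla g_k^{t}(\mathbf{x}(t))]\tran[\mathbf{z}-\mathbf{x}(t)]$ by $\sum_k Q_k(t)\big(g_k^{t}(\mathbf{z})-g_k^{t}(\mathbf{x}(t))\big)$. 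On the left-hand side I would \emph{lower}-bound the queue--subgradient inner product using Lemma~\ref{lm:drift}, i.e.\ by $\Delta(t)-\sum_k Q_k(t)g_k^{t}(\mathbf{x}(t))-\frac12(G+\sqrt m D_2 R)^2$, and I would lower-bound $V[\nabla f^{t}(\mathbf{x}(t))]\tran[\mathbf{x}(t+1)-\mathbf{x}(t)]+\alpha\Vert\mathbf{x}(t+1)-\mathbf{x}(t)\Vert^2$ by Cauchy--Schwarz together with $\Vert\nabla f^{t}\Vert\leq D_1$ and then completing the square in the scalar $u=\Vert\mathbf{x}(t+1)-\mathbf{x}(t)\Vert$, using $\min_{u\geq 0}(\alpha u^2-VD_1 u)=-\frac{V^2 D_1^2}{4\alpha}$.

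Putting the two sides together, the $-\sum_k Q_k(t)g_k^{t}(\mathbf{x}(t))$ terms cancel, and rearranging yields the per-round bound
\begin{align*}
V f^{t}(\mathbf{x}(t)) \leq{}& V f^{t}(\mathbf{z}) + \sum_{k=1}^m Q_k(t) g_k^{t}(\mathbf{z}) + \alpha\Vert\mathbf{z}-\mathbf{x}(t)\Vert^2 - \alpha\Vert\mathbf{z}-\mathbf{x}(t+1)\Vert^2 \\
&{}- \Delta(t) + \frac{V^2 D_1^2}{4\alpha} + \frac12\big(G+\sqrt m D_2 R\big)^2 .
\end{align*}
Summing over $t=1,\dots,T$: the $-\Delta(t)$ terms telescope to $L(1)-L(T+1)\leq 0$ since $\mathbf{Q}(1)=\mathbf{0}$ gives $L(1)=0$; the quadratic distance terms telescope to at most $\alpha\Vert\mathbf{z}-\mathbf{x}(1)\Vert^2\leq\alpha R^2$ by Assumption~\ref{as:basic}; and the remaining constants contribute $T$ copies each. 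Dividing by $V>0$ produces exactly the bound \eqref{eq:deterministic-regret-bound}, with the bracketed term (I) being $\frac{\alpha}{V}R^2+\frac{VD_1^2}{4\alpha}T+\frac12(G+\sqrt m D_2 R)^2\frac{T}{V}$.

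I expect the only delicate point to be keeping the inequality directions consistent: we need a \emph{lower} bound on the left side of Lemma~\ref{lm:decision-update-inequality} and an \emph{upper} bound on its right side, which means Lemma~\ref{lm:drift} must be used "in reverse" to bound the queue--subgradient inner product from below, precisely so that its $-\sum_k Q_k(t)g_k^{t}(\mathbf{x}(t))$ term cancels against the one generated by convexity of $g_k^{t}$. Once that cancellation is spotted, the rest is the standard completing-the-square estimate and routine telescoping.
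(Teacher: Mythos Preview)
Your proposal is correct and follows essentially the same approach as the paper's proof: both combine Lemma~\ref{lm:decision-update-inequality} with the drift bound of Lemma~\ref{lm:drift}, use convexity of $f^t$ and $g_k^t$ at $\mathbf{x}(t)$ so that the $\sum_k Q_k(t)g_k^t(\mathbf{x}(t))$ terms cancel, complete the square to get the $\frac{V^2D_1^2}{4\alpha}$ term, and then telescope. The only cosmetic difference is that the paper adds $Vf^t(\mathbf{x}(t))+\sum_k Q_k(t)g_k^t(\mathbf{x}(t))$ to both sides of Lemma~\ref{lm:decision-update-inequality} before applying the bounds, whereas you phrase it as lower-bounding the left side and upper-bounding the right side; the algebra is identical.
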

\begin{proof}
Fix $t\geq 1$.  By Lemma \ref{lm:decision-update-inequality}, we have
\begin{align*}
&V [\nabla f^{t}(\mathbf{x}(t))]\tran [\mathbf{x}(t+1) - \mathbf{x}(t)]+  \sum_{k=1}^{m} Q_{k}(t) [\nabla g^{t}_{k}(\mathbf{x}(t))]\tran [\mathbf{x}(t+1) - \mathbf{x}(t)] + \alpha \Vert \mathbf{x}(t+1) - \mathbf{x}(t)\Vert^{2} \\
\leq& V [\nabla f^{t}(\mathbf{x}(t))]\tran [\mathbf{z} - \mathbf{x}(t)]+  \sum_{k=1}^{m} Q_{k}(t) [\nabla g^{t}_{k}(\mathbf{x}(t))]\tran [\mathbf{z} - \mathbf{x}(t)]  + \alpha [\Vert \mathbf{z} - \mathbf{x}(t)\Vert^{2} - \Vert \mathbf{z} - \mathbf{x}(t+1)\Vert^{2}]. 
\end{align*}
Adding constant $Vf^{t}(\mathbf{x}(t)) + \sum_{k=1}^{m} Q_k(t) g_k^{t}(\mathbf{x}(t))$ on both sides; and noting that 
$f^{t}(\mathbf{x}(t)) + [\nabla f^{t}(\mathbf{x}(t))]\tran [\mathbf{z} - \mathbf{x}(t)] \leq f^{t}(\mathbf{z})$ and $g_k^{t}(\mathbf{x}(t))+[\nabla g^{t}_{k}(\mathbf{x}(t))]\tran [\mathbf{z} - \mathbf{x}(t)] \leq g_k^{t}(\mathbf{z})$ by convexity yields
\begin{align}
&Vf^{t}(\mathbf{x}(t)) +V [\nabla f^{t}(\mathbf{x}(t))]\tran [\mathbf{x}(t+1) - \mathbf{x}(t)]+  \sum_{k=1}^{m} Q_{k}(t)\big[ g^{t}_{k}(\mathbf{x}(t)) + [\nabla g^{t}_{k}(\mathbf{x}(t))]\tran [\mathbf{x}(t+1) - \mathbf{x}(t)] \big] \nonumber \\ & \qquad+ \alpha \Vert \mathbf{x}(t+1) - \mathbf{x}(t)\Vert^{2} \nonumber \\
\leq& V [\nabla f^{t}(\mathbf{z})]+  \sum_{k=1}^{m} Q_{k}(t)g_{k}^{t}(\mathbf{z}) + \alpha [\Vert \mathbf{z} - \mathbf{x}(t)\Vert^{2} - \Vert \mathbf{z} - \mathbf{x}(t+1)\Vert^{2}].  \label{eq:pf-dpp-bound-eq1}
\end{align}
By Lemma \ref{lm:drift}, we have 
\begin{align}
\Delta(t) \leq  \sum_{k=1}^{m} Q_{k}(t)\big[ g_{k}^{t}(\mathbf{x}(t)) + [\nabla g^{t}_{k}(\mathbf{x}(t))]\tran [\mathbf{x}(t+1) - \mathbf{x}(t)]\big] +  \frac{1}{2}[G+\sqrt{m}D_{2}R]^{2}. \label{eq:pf-dpp-bound-eq2}
\end{align}
Summing \eqref{eq:pf-dpp-bound-eq1} and \eqref{eq:pf-dpp-bound-eq2}, cancelling common terms  and rearranging terms yields

\begin{align}
Vf^{t}(\mathbf{x}(t)) \leq&Vf^{t}(\mathbf{z}) -\Delta(t) +  \sum_{k=1}^{m} Q_{k}(t) g_{k}^{t}(\mathbf{z})+ \alpha [\Vert \mathbf{z} - \mathbf{x}(t)\Vert^{2}- \Vert \mathbf{z} - \mathbf{x}(t+1)\Vert^{2}] \nonumber\\ & -  V [\nabla f^{t}(\mathbf{x}(t))]\tran [\mathbf{x}(t+1) - \mathbf{x}(t)] - \alpha \Vert \mathbf{x}(t+1) - \mathbf{x}(t)\Vert^{2} +   \frac{1}{2}[G+\sqrt{m}D_{2}R]^{2} \label{eq:pf-dpp-bound-eq3}
\end{align}
Note that 
\begin{align}
&-V [\nabla f^{t}(\mathbf{x}(t))]\tran [\mathbf{x}(t+1) - \mathbf{x}(t)] - \alpha \Vert \mathbf{x}(t+1) - \mathbf{x}(t)\Vert^{2} \nonumber \\
\overset{(a)}{\leq}& V \Vert \nabla f^{t}(\mathbf{x}(t))\Vert \Vert\mathbf{x}(t+1) - \mathbf{x}(t)\Vert - \alpha \Vert \mathbf{x}(t+1) - \mathbf{x}(t)\Vert^{2} \nonumber\\
\overset{(b)}{\leq}& V D_1 \Vert\mathbf{x}(t+1) - \mathbf{x}(t)\Vert - \alpha \Vert \mathbf{x}(t+1) - \mathbf{x}(t)\Vert^{2} \nonumber\\
= &  -\alpha \big[ \Vert \mathbf{x}(t+1) - \mathbf{x}(t)\Vert - \frac{V D_{1}}{2\alpha}\big]^{2} + \frac{V^{2}D_{1}^{2}}{4\alpha} \nonumber\\
\leq&  \frac{V^{2}D_{1}^{2}}{4\alpha} \label{eq:pf-dpp-bound-eq4}
\end{align}
where (a) follows from the Cauchy-Schwartz inequality; and (b) follows from Assumption \ref{as:basic}.
Substituting \eqref{eq:pf-dpp-bound-eq4} into \eqref{eq:pf-dpp-bound-eq3} yields
 \begin{align*}
 Vf^{t}(\mathbf{x}(t)) \leq Vf^{t}(\mathbf{z}) -\Delta(t) +  \sum_{k=1}^{m} Q_{k}(t) g_{k}^{t}(\mathbf{z})+ \alpha [\Vert \mathbf{z} - \mathbf{x}(t)\Vert^{2}- \Vert \mathbf{z} - \mathbf{x}(t+1)\Vert^{2}] + \frac{V^{2}D_{1}^{2}}{4\alpha}  + \frac{1}{2}[G+\sqrt{m}D_{2}R]^{2}.
\end{align*}
Summing over $t\in\{1,2,\ldots,T\}$ yields
\begin{align*}
V\sum_{t=1}^{T}f^{t}(\mathbf{x}(t)) \leq & V\sum_{t=1}^{T}f^{t}(\mathbf{z}) - \sum_{t=1}^{T}\Delta(t) +  \alpha \sum_{t=1}^{T}[\Vert \mathbf{z}- \mathbf{x}(t)\Vert^{2} - \Vert \mathbf{z} - \mathbf{x}(t+1)\Vert^{2}]  + \frac{V^{2}D_{1}^{2}}{4\alpha} T \\ &+ \frac{1}{2}[G+\sqrt{m}D_{2}R]^{2} T + \sum_{t=1}^{T} \big[\sum_{k=1}^{m} Q_{k}(t) g_{k}^{t}(\mathbf{z})\big]\\
\overset{(a)}{=}& V\sum_{t=1}^{T}f^{t}(\mathbf{z}) + L(1) -L(T+1)+ \alpha \Vert \mathbf{z} - \mathbf{x}(1)\Vert^{2} -  \alpha \Vert \mathbf{z} - \mathbf{x}(T+1)\Vert^{2} + \frac{V^{2}D_{1}^{2}}{4\alpha} T \\&+ \frac{1}{2}[G+\sqrt{m}D_{2}R]^{2} T + \sum_{t=1}^{T} \big[\sum_{k=1}^{m} Q_{k}(t) g_{k}^{t}(\mathbf{z})\big]\\
\overset{(b)}{\leq} & V\sum_{t=1}^{T}f^{t}(\mathbf{z}) + \alpha R^{2} + \frac{V^{2}D_{1}^{2}}{4\alpha} T + \frac{1}{2}[G+\sqrt{m}D_{2}R]^{2} T + \sum_{t=1}^{T} \big[\sum_{k=1}^{m} Q_{k}(t) g_{k}^{t}(\mathbf{z})\big]. 
\end{align*}
where (a) follows by recalling that $\Delta(t) = L(t+1) -L(t)$; and (b) follows because $\Vert \mathbf{z} - \mathbf{x}(1)\Vert\leq R$ by Assumption \ref{as:basic},  $L(1)=\frac{1}{2} \Vert \mathbf{Q}(1)\Vert^2 = 0$ and $L(T+1) = \frac{1}{2}\Vert \mathbf{Q}(T+1)\Vert^{2} \geq 0$.

Dividing both sides by $V$ yields the desired result.

\end{proof}
Note that if we take $V=\sqrt{T}$ and $\alpha = T$, then term (I) in \eqref{eq:deterministic-regret-bound} is $O(\sqrt{T})$. Recall that the expectation of term (II) in \eqref{eq:deterministic-regret-bound} with $\mathbf{z} = \mathbf{x}^\ast$ is non-positive by Lemma \ref{lm:queue-independence}. The expected regret bound of Algorithm \ref{alg:new-alg} follows by taking expectations on both sides of \eqref{eq:deterministic-regret-bound} and is summarized in the next theorem.

\begin{Thm}[Expected Regret Bound] \label{thm:regret-bound}
Let $\mathbf{x}^{\ast}\in \mathcal{X}_{0}$ be any fixed solution that satisfies $\tilde{\mathbf{g}}(\mathbf{x}^{\ast}) \leq \mathbf{0}$, e.g., $\mathbf{x}^{\ast} = \argmin_{\mathbf{x}\in \mathcal{X}} \sum_{t=1}^{T} f^{t}(\mathbf{x})$.   If $V = \sqrt{T}$ and $\alpha = T$ in Algorithm \ref{alg:new-alg}, then for all $T\geq 1$, 

\begin{align*}
\mathbb{E}[\sum_{t=1}^{T}f^{t}(\mathbf{x}(t))] \leq \mathbb{E}[\sum_{t=1}^{T}f^{t}(\mathbf{x}^{\ast})] +  O(\sqrt{T}).
\end{align*}
where the expectation is taken with respect to all $\omega(t)$.
\end{Thm}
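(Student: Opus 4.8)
My plan is to derive the bound directly from the two preceding results: Lemma~\ref{lm:deterministic-regret-bound} furnishes a pathwise regret inequality valid for an arbitrary comparator $\mathbf{z}\in\mathcal{X}_{0}$, and Lemma~\ref{lm:queue-independence} shows that the only queue-weighted term in that inequality has nonpositive expectation once $\mathbf{z}$ is taken to be a feasible point $\mathbf{x}^{\ast}$. After substituting $V=\sqrt{T}$ and $\alpha=T$ the deterministic remainder collapses to $O(\sqrt{T})$, and taking expectations finishes the argument.

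Concretely, I would first invoke Lemma~\ref{lm:deterministic-regret-bound} with $\mathbf{z}=\mathbf{x}^{\ast}$ (legitimate since $\mathbf{x}^{\ast}\in\mathcal{X}_{0}$), obtaining for every sample path
\[
\sum_{t=1}^{T} f^{t}(\mathbf{x}(t)) \leq \sum_{t=1}^{T} f^{t}(\mathbf{x}^{\ast}) + \underbrace{\frac{\alpha}{V}R^{2} + \frac{VD_{1}^{2}}{4\alpha}T + \frac{1}{2}[G+\sqrt{m}D_{2}R]^{2}\frac{T}{V}}_{(\mathrm{I})} + \underbrace{\frac{1}{V}\sum_{t=1}^{T}\sum_{k=1}^{m} Q_{k}(t)\, g_{k}^{t}(\mathbf{x}^{\ast})}_{(\mathrm{II})}.
\]
With $V=\sqrt{T}$ and $\alpha=T$ the term $(\mathrm{I})$ equals $\big(R^{2}+\tfrac{D_{1}^{2}}{4}+\tfrac{1}{2}[G+\sqrt{m}D_{2}R]^{2}\big)\sqrt{T}=O(\sqrt{T})$, a deterministic constant. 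Taking expectations over all $\omega(t)$ and using linearity of expectation gives
\[
\mathbb{E}\Big[\sum_{t=1}^{T} f^{t}(\mathbf{x}(t))\Big] \leq \mathbb{E}\Big[\sum_{t=1}^{T} f^{t}(\mathbf{x}^{\ast})\Big] + O(\sqrt{T}) + \frac{1}{V}\sum_{t=1}^{T}\sum_{k=1}^{m}\mathbb{E}\big[ Q_{k}(t)\, g_{k}^{t}(\mathbf{x}^{\ast})\big],
\]
and since $\tilde{\mathbf{g}}(\mathbf{x}^{\ast})\leq\mathbf{0}$, Lemma~\ref{lm:queue-independence} makes every summand in the last term $\leq 0$, so it may be dropped; this is exactly the asserted inequality.

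I do not expect a genuine obstacle here — the calculation is essentially bookkeeping — but the step that deserves care is the independence used inside Lemma~\ref{lm:queue-independence}: its proof splits $\mathbb{E}[Q_{k}(t)g_{k}^{t}(\mathbf{x}^{\ast})]$ as $\mathbb{E}[Q_{k}(t)]\,\mathbb{E}[g_{k}^{t}(\mathbf{x}^{\ast})]$, which requires $\mathbf{x}^{\ast}$ to be a fixed point not depending on the realized $\omega$'s, since $Q_{k}(t)$ is $\mathcal{W}(t-1)$-measurable while $g_{k}^{t}(\mathbf{x}^{\ast})=g_{k}(\mathbf{x}^{\ast};\omega(t))$. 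This holds automatically when the loss sequence $f^{t}$ is deterministic, so that the hindsight minimizer $\argmin_{\mathbf{x}\in\mathcal{X}}\sum_{t=1}^{T} f^{t}(\mathbf{x})$ is itself deterministic; in the more general setting where $f^{t}$ may be chosen adversarially from $\omega(1),\dots,\omega(t)$, the theorem should be read as the stated bound holding for every fixed feasible comparator $\mathbf{x}^{\ast}\in\mathcal{X}_{0}$, which is precisely the ``$\mathbf{x}^{\ast}\in\mathcal{X}_{0}$ be any fixed solution'' phrasing and all that the remaining development requires. Everything else — positivity of $V$, the $L(T+1)\geq0$ fact already handled inside Lemma~\ref{lm:deterministic-regret-bound}, and the arithmetic of the parameter substitution — is routine.
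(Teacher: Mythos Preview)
Your proposal is correct and follows essentially the same approach as the paper: invoke Lemma~\ref{lm:deterministic-regret-bound} with $\mathbf{z}=\mathbf{x}^{\ast}$, take expectations, use Lemma~\ref{lm:queue-independence} to discard term~(II), and substitute $V=\sqrt{T}$, $\alpha=T$ to reduce term~(I) to $O(\sqrt{T})$. Your additional remark about the fixedness of $\mathbf{x}^{\ast}$ needed for the independence in Lemma~\ref{lm:queue-independence} is a valid clarification but does not alter the argument.
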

\begin{proof}
Fix $T\geq 1$. Taking $\mathbf{z} = \mathbf{x}^\ast$ in Lemma \ref{lm:deterministic-regret-bound} yields
\begin{align*}
\sum_{t=1}^{T}f^{t}(\mathbf{x}(t))\leq \sum_{t=1}^{T}f^{t}(\mathbf{x}^{\ast}) + \frac{\alpha}{V} R^{2} + \frac{VD_{1}^{2}}{4\alpha} T + \frac{1}{2}[G+\sqrt{m}D_{2}R]^{2} \frac{T}{V} + \frac{1}{V}\sum_{t=1}^{T} \big[\sum_{k=1}^{m} Q_{k}(t) g_{k}^{t}(\mathbf{x}^{\ast})\big].
\end{align*}
Taking expectations on both sides and using \eqref{eq:expectation-is-0} yields 
\begin{align*}
\sum_{t=1}^{T}\mathbb{E}[f^{t}(\mathbf{x}(t))] \leq \sum_{t=1}^{T} \mathbb{E}[f^{t}(\mathbf{x}^{\ast})] +   \frac{\alpha}{V}R^{2}+ \frac{D_{1}^{2}}{4} \frac{V}{\alpha}T + \frac{1}{2}[G+\sqrt{m}D_{2}R]^{2} \frac{T}{V}.
\end{align*}
Taking $V = \sqrt{T}$ and $\alpha = T$ yields
\begin{align*}
\sum_{t=1}^{T}\mathbb{E}[f^{t}(\mathbf{x}(t))] \leq \sum_{t=1}^{T}\mathbb{E}[f^{t}(\mathbf{x}^{\ast})] +  O(\sqrt{T}).
\end{align*}
\end{proof}

\subsection{Special Case Performance Guarantees}
Theorems \ref{thm:constraint-bound} and \ref{thm:regret-bound} provide expected performance guarantees of Algorithm \ref{alg:new-alg} for OCO with stochastic constraints.  The results further imply the performance guarantees in the following special cases:

\begin{itemize}[leftmargin=20pt]
\item  {\bf OCO with long term constraints}: In this case, $g_k(\mathbf{x}; \omega(t))\equiv g_k(\mathbf{x})$ and there is no randomness. Thus, the expectations in Theorems \ref{thm:constraint-bound} and \ref{thm:regret-bound} disappear.  For this problem, Algorithm \ref{alg:new-alg} can achieve $O(\sqrt{T})$ (deterministic) regret and $O(\sqrt{T})$ (deterministic) constraint violations.
\item  {\bf Stochastic constrained convex optimization}: Note that i.i.d. time-varying $f(\mathbf{x}; \omega(t))$ is a special case of arbitrarily-varying $f^{t}(\mathbf{x})$ as considered in our OCO setting. Thus, Theorems \ref{thm:constraint-bound} and \ref{thm:regret-bound} still hold when Algorithm \ref{alg:new-alg} is applied to stochastic constrained convex optimization.  That is, $\sum_{t=1}^T \mathbb{E}[f^t(\mathbf{x}(t))] \leq \sum_{t=1}^T \mathbb{E}[f^{t}(\mathbf{x}^\ast)] + O(\sqrt{T})$ and $\sum_{t=1}^{T} \mathbb{E}[g_k^t (\mathbf{x}(t))] \leq O(\sqrt{T}),\forall k\in\{1,2,\ldots,n\}$. This online performance guarantee also implies Algorithm \ref{alg:new-alg} can be used a (batch) offline algorithm with $O(1/\sqrt{T})$ convergence for stochastic constrained convex optimization. That is, after running Algorithm \ref{alg:new-alg} for $T$ slots, if we use  $\overline{\mathbf{x}}(T) =\frac{1}{T}\sum_{t=1}^{T}\mathbf{x}(t)$ as a fixed solution, then $\mathbb{E}[f(\overline{\mathbf{x}}(T); \omega)] = \mathbb{E}[f^t(\overline{\mathbf{x}}(T))] \leq \mathbb{E}[f^{t}(\mathbf{x}^\ast)] + O(\frac{1}{\sqrt{T}})$ and $\mathbb{E}[g_k(\overline{\mathbf{x}}(T);\omega)] = \mathbb{E}[g_k^t (\overline{\mathbf{x}}(T))] \leq O(\frac{1}{\sqrt{T}}),\forall k\in\{1,2,\ldots,n\}$ with $t\geq T+1$ by the i.i.d. property of each $f^t$ and $g^t$ and Jensen's inequality.  If we use Algorithm \ref{alg:new-alg} as a (batch) offline algorithm, its performance ties with the algorithm developed in \cite{Lan16arXiv}, which is by design a (batch) offline algorithm and can only solve stochastic optimization with a single constraint function.

\item  {\bf Deterministic constrained convex optimization}:  Similarly to OCO with long term constraints,  the expectations in Theorems \ref{thm:constraint-bound} and \ref{thm:regret-bound} disappear in this case since $f^t(\mathbf{x})\equiv f(\mathbf{x})$ and $g_k(\mathbf{x}; \omega(t))\equiv g_k(\mathbf{x})$.  If we use $\overline{\mathbf{x}}(T) =\frac{1}{T}\sum_{t=1}^{T}\mathbf{x}(t)$ as the solution, then $f(\overline{\mathbf{x}}(T)) \leq f(\mathbf{x}^\ast) +O(\frac{1}{\sqrt{T}})$ and $g_k(\overline{\mathbf{x}}(T)) \leq O(\frac{1}{\sqrt{T}})$, which follows by dividing inequalities in Theorems \ref{thm:constraint-bound} and \ref{thm:regret-bound} by $T$ on both sides and applying Jensen's inequality.  Thus, Algorithm \ref{alg:new-alg} solves deterministic constrained convex optimization with $O(\frac{1}{\sqrt{T}})$ convergence.
\end{itemize}

\section{High Probability Performance Analysis}

This section  shows that if we choose $V = \sqrt{T}$ and $\alpha = T$ in Algorithm \ref{alg:new-alg}, then for any $0<\lambda<1$, with probability at least $1-\lambda$, regret is $O(\sqrt{T}\log(T)\log^{1.5}(\frac{1}{\lambda}))$ and constraint violations are $O\big(\sqrt{T}\log(T)\log(\frac{1}{\lambda})\big)$.
\vspace{-0.5em}
\subsection{High Probability Constraint Violation Analysis}
Similarly to the expected constraint violation analysis, we can use part (2) of the new drift lemma (Lemma \ref{lm:drift-random-process-bound}) to obtain a high probability bound of $\Vert \mathbf{Q}(t)\Vert$, which together with Corollary \ref{cor:queue-constraint-inequality} leads to a high probability constraint violation bound summarized in Theorem \ref{thm:high-prob-constraint-bound}.

\begin{Thm}[High Probability Constraint Violation Bound] \label{thm:high-prob-constraint-bound}
Let $0<\lambda<1$ be arbitrary. If $V = \sqrt{T}$ and $\alpha = T$ in Algorithm \ref{alg:new-alg}, then for all $T\geq 1$ and all $k\in\{1,2,\ldots,m\}$, we have
\begin{align*}
\text{Pr}\Big( \sum_{t=1}^{T}g_{k}(\mathbf{x}(t)) \leq O\big(\sqrt{T}\log(T)\log(\frac{1}{\lambda})\big) \Big) \geq 1-\lambda.
\end{align*}
\end{Thm}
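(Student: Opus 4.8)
The plan is to reduce the claim to a high-probability uniform bound on the virtual queue norms $\Vert\mathbf{Q}(t)\Vert$ and then feed that bound into Corollary~\ref{cor:queue-constraint-inequality}. With $V=\sqrt{T}$ and $\alpha=T$, Corollary~\ref{cor:queue-constraint-inequality} reads, for every $k$,
\begin{align*}
\sum_{t=1}^{T} g^t_{k}(\mathbf{x}(t)) \leq \Vert\mathbf{Q}(T+1)\Vert + \frac{\sqrt{T}D_1D_2}{2} + \frac{\sqrt{m}D_2^2}{2T}\sum_{t=1}^{T}\Vert\mathbf{Q}(t)\Vert ,
\end{align*}
so it suffices to exhibit a deterministic constant $z = O\big(\sqrt{T}\log(T)\log(\tfrac{1}{\lambda})\big)$ such that, with probability at least $1-\lambda$, the event $\{\Vert\mathbf{Q}(t)\Vert \le z\ \text{for all } t\in\{1,\dots,T+1\}\}$ holds. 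On that event the right-hand side above is at most $z\big(1+\tfrac{\sqrt{m}D_2^2}{2}\big) + \tfrac{\sqrt{T}D_1D_2}{2} = O(z)$, which is exactly the asserted bound; and since the queue bound does not depend on $k$, the conclusion then holds for all $k$ simultaneously on a single event.

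To produce such a $z$, I would apply part (2) of the drift lemma (Lemma~\ref{lm:drift-random-process-bound}) to the process $Z(t)=\Vert\mathbf{Q}(t)\Vert$, adapted to $\mathcal{F}(t)=\mathcal{W}(t-1)$. By Lemma~\ref{lm:queue-decrease-condition} with $t_0=\lceil\sqrt{T}\rceil$, this process satisfies the hypotheses of Lemma~\ref{lm:drift-random-process-bound} with $\delta_{\max}=G+\sqrt{m}D_2R$, $\zeta=\tfrac{\epsilon}{2}$, and $\theta$ as given there; plugging in $V=\sqrt{T}$, $\alpha=T$, $t_0=\lceil\sqrt{T}\rceil$ one checks $\theta = O(\sqrt{T})$. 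Choosing $\mu = \tfrac{\lambda}{T+1}$ in part (2) yields, for each fixed $t$,
\begin{align*}
\text{Pr}\big(\Vert\mathbf{Q}(t)\Vert \geq z\big) \leq \frac{\lambda}{T+1}, \qquad z = \theta + t_0\frac{4\delta_{\max}^{2}}{\zeta}\log\!\Big[1+\frac{8\delta_{\max}^{2}}{\zeta^{2}}e^{\zeta/(4\delta_{\max})}\Big] + t_0\frac{4\delta_{\max}^{2}}{\zeta}\log\frac{T+1}{\lambda}.
\end{align*}
Since $t_0=\lceil\sqrt{T}\rceil=O(\sqrt{T})$, $\theta=O(\sqrt{T})$, and $\log\tfrac{T+1}{\lambda}=\log(T+1)+\log\tfrac{1}{\lambda}$, this gives $z = O(\sqrt{T}) + O(\sqrt{T})\big(\log(T)+\log\tfrac{1}{\lambda}\big) = O\big(\sqrt{T}\log(T)\log(\tfrac{1}{\lambda})\big)$. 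A union bound over $t\in\{1,\dots,T+1\}$ then shows $\text{Pr}\big(\exists\,t\le T+1:\ \Vert\mathbf{Q}(t)\Vert\ge z\big)\le(T+1)\cdot\tfrac{\lambda}{T+1}=\lambda$, i.e.\ the desired event holds with probability at least $1-\lambda$. Substituting $\Vert\mathbf{Q}(t)\Vert< z$ for $t\le T+1$ into the consequence of Corollary~\ref{cor:queue-constraint-inequality} above completes the proof.

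I do not anticipate a genuine obstacle: the structural facts are already in place (Lemma~\ref{lm:queue-decrease-condition} supplies the drift condition for every $t$ under a single choice of $t_0$, and Corollary~\ref{cor:queue-constraint-inequality} supplies the deterministic reduction). The only points requiring care are bookkeeping rather than ideas: first, the union bound injects a $\log((T+1)/\lambda)$ factor through $\log(1/\mu)$, and one must verify that this, multiplied by the $t_0=\Theta(\sqrt{T})$ prefactor, aggregates to the claimed $\sqrt{T}\log(T)$ rate rather than something larger; and second, one should confirm that with $t_0=\lceil\sqrt{T}\rceil$, $V=\sqrt T$, $\alpha=T$ every term in $\theta$ (in particular $\tfrac{2\alpha R^2}{t_0\epsilon}$ and $\tfrac{2VD_1R}{\epsilon}$) is indeed $O(\sqrt{T})$, so that $\theta$ does not dominate.
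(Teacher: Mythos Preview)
Your proposal is correct and follows essentially the same approach as the paper's own proof: apply Lemma~\ref{lm:queue-decrease-condition} with $t_0=\lceil\sqrt{T}\rceil$ to verify the hypotheses of Lemma~\ref{lm:drift-random-process-bound}, invoke part~(2) with $\mu=\lambda/(T+1)$, union-bound over $t\in\{1,\dots,T+1\}$ to get a uniform queue bound of order $\sqrt{T}\log(T)\log(1/\lambda)$, and substitute into Corollary~\ref{cor:queue-constraint-inequality}. The bookkeeping checks you flag are exactly the ones the paper carries out.
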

\begin{proof}
Define random process $Z(t) = \Vert \mathbf{Q}(t)\Vert, \forall t\in\{1,2,\ldots\}$. By Lemma \ref{lm:queue-decrease-condition}, $Z(t)$ satisfies the conditions in Lemma \ref{lm:drift-random-process-bound} with $\delta_{\max} = G+\sqrt{m}D_2R$, $\zeta = \frac{\epsilon}{2}$ and $$\theta = \frac{\epsilon}{2}t_0+ (G+\sqrt{m}D_2R)t_0 + \frac{2\alpha R^2}{t_0 \epsilon}+ \frac{4VD_{1} R +  [G+\sqrt{m}D_{2}R]^{2}}{\epsilon}.$$  Fix $T\geq 1$ and $0<\lambda <1$. Taking $\mu = \lambda/(T+1)$ in part (2) of Lemma \ref{lm:drift-random-process-bound} yields $$\text{Pr}(\Vert \mathbf{Q}(t)\Vert \geq \gamma) \leq \frac{\lambda}{T+1}, \forall t\in\{1,2,\ldots, T+1\},$$ where $\gamma = \frac{\epsilon}{2}t_0+ (G+\sqrt{m}D_2R)t_0 + \frac{2\alpha R^2}{t_0 \epsilon}+ \frac{2VD_{1} R +  [G+\sqrt{m}D_{2}R]^{2}}{\epsilon}+ t_0B + t_0 \frac{8[G+\sqrt{m}D_{2}R]^{2}}{\epsilon} \log(\frac{T+1}{\lambda})$ with $B = \frac{8[G+\sqrt{m}D_{2}R]^{2}}{\epsilon}\log[1+ \frac{32[G+\sqrt{m}D_{2}R]^{2}}{\epsilon^{2}} e^{\epsilon/[8(G+\sqrt{m}D_{2}R)]}]$ being an absolute constant irrelevant to algorithm parameters. 

By union bounds, we have
\begin{align*}
\text{Pr}(\Vert \mathbf{Q}(t)\Vert \geq \gamma \text{~for some~} t\in\{ 1,2,\ldots, T+1\}) \leq \lambda.
\end{align*}
This implies 
\begin{align}
\text{Pr}(\Vert \mathbf{Q}(t)\Vert \leq \gamma \text{~for all~} t\in\{ 1,2,\ldots, T+1\}) \geq 1-\lambda. \label{eq:pf-thm-high-prob-constraint-bound-eq1}
\end{align}
Taking $t_0 = \lceil \sqrt{T}\rceil$, $V = \sqrt{T}$ and $\alpha = T$ yields 
\begin{align}
\gamma = O(\sqrt{T}\log(T)) + O(\sqrt{T}\log(\frac{1}{\lambda})) = O(\sqrt{T}\log(T)\log(\frac{1}{\lambda})) \label{eq:pf-thm-high-prob-constraint-bound-eq2}
\end{align} 
Recall that by Corollary \ref{cor:queue-constraint-inequality} (with $V=\sqrt{T}$ and $\alpha = T$), for all $k\in\{1,2,\ldots, m\}$, we have 
\begin{align}
\sum_{t=1}^{T} g_{k}(\mathbf{x}(t)) \leq  \Vert \mathbf{Q}(T+1)\Vert +  \frac{\sqrt{T}D_1D_2}{2} + \frac{\sqrt{m}D_2^2}{2T}\sum_{t=1}^{T} \Vert \mathbf{Q}(t)\Vert. \label{eq:pf-thm-high-prob-constraint-bound-eq3}
\end{align}
It follows from \eqref{eq:pf-thm-high-prob-constraint-bound-eq1}-\eqref{eq:pf-thm-high-prob-constraint-bound-eq3} that $$\text{Pr}\big( \sum_{t=1}^{T} g_{k}(\mathbf{x}(t)) \leq O(\sqrt{T}\log(T)\log(\frac{1}{\lambda})) \geq 1-\lambda.$$

\end{proof}

\subsection{High Probability Regret Analysis}

To obtain a high probability regret bound from Lemma \ref{lm:deterministic-regret-bound}, it remains to derive a high probability bound of term (II) in \eqref{eq:deterministic-regret-bound} with $\mathbf{z} = \mathbf{x}^\ast$.  The main challenge is that term (II) is a supermartingale with unbounded differences (due to the possibly unbounded virtual queues $Q_k(t)$).  Most  concentration inequalities, e.g., the Hoeffding-Azuma inequality, used in high probability performance analysis of online algorithms are restricted to martingales/supermartingales with bounded differences. See for example \cite{book_PredictionLearningGames,Bartlett08COLT,Mahdavi12JMLR}.  The following lemma considers supermartingales with unbounded differences. Its proof uses the truncation method to construct an auxiliary well-behaved supermargingale. Similar proof  techniques are previously used in \cite{Vu02Concentration,Tao15AnnalsProbability} to prove different concentration inequalities for supermartingales/martingales with unbounded differences.

\begin{Lem}\label{lm:extended-Azuma-inequality}
Let $\{Z(t), t\geq 0\}$ be a supermartingale adapted to a filtration $\{\mathcal{F}(t), t\geq 0\}$ with $Z(0) = 0$ and $\mathcal{F}(0) =\{\emptyset, \Omega\}$, i.e., $\mathbb{E}[Z(t+1)|\mathcal{F}(t)] \leq Z(t), \forall t\geq 0$. If there exits constant $c>0$ such that $ \{ \vert Z(t+1) - Z(t) \vert > c\}\subseteq \{Y(t) > 0\}, \forall t\geq 0$, where each $Y(t)$ is adapted to $\mathcal{F}(t)$ and $\text{Pr}(Y(t) > 0)\leq p(t), \forall t\geq 0$. Then, for all $z>0$, we have
\begin{align*}
\text{Pr}(Z(t) \geq z) \leq e^{-z^2/(2tc^2)} + \sum_{\tau=0}^{t-1} p(\tau), \forall t\geq 1.
\end{align*}
\end{Lem}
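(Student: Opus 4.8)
The plan is to reduce the unbounded-difference supermartingale $Z(t)$ to a well-behaved one by truncation and then apply the standard Hoeffding--Azuma machinery. Concretely, define a new process $\widetilde Z(t)$ by stopping (or truncating the increments of) $Z$ the first time a ``bad'' event $\{Y(\tau)>0\}$ occurs: let $\tau^\ast = \min\{\tau\ge 0 : Y(\tau) > 0\}$ (with $\tau^\ast = \infty$ if no such $\tau$), and set $\widetilde Z(t) = Z(t\wedge \tau^\ast)$. Since each $Y(\tau)$ is $\mathcal{F}(\tau)$-measurable, $\tau^\ast$ is a stopping time, so $\widetilde Z(t)$ is again a supermartingale adapted to $\{\mathcal{F}(t)\}$ with $\widetilde Z(0)=0$. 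The key point is that on the event $\{t < \tau^\ast\}$ none of $Y(0),\dots,Y(t-1)$ is positive, so by the hypothesis $\{|Z(\tau+1)-Z(\tau)|>c\}\subseteq\{Y(\tau)>0\}$ we get $|Z(\tau+1)-Z(\tau)|\le c$ for all $\tau < t\wedge\tau^\ast$; hence the increments of $\widetilde Z$ are bounded by $c$ almost surely for every step.

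Next I would invoke the Hoeffding--Azuma inequality for supermartingales with bounded differences: for a supermartingale $\widetilde Z$ with $\widetilde Z(0)=0$ and $|\widetilde Z(\tau+1)-\widetilde Z(\tau)|\le c$, one has $\text{Pr}(\widetilde Z(t)\ge z)\le e^{-z^2/(2tc^2)}$ for all $z>0$. (This is the one-sided Azuma bound; it applies to supermartingales, not just martingales, since the conditional-mean-decrease only helps the upper tail.) Then I would relate $Z(t)$ back to $\widetilde Z(t)$ through the bad events: on the event $\bigcap_{\tau=0}^{t-1}\{Y(\tau)\le 0\}$ we have $\tau^\ast \ge t$, hence $\widetilde Z(t) = Z(t)$, so
\begin{align*}
\{Z(t)\ge z\} \subseteq \{\widetilde Z(t)\ge z\} \cup \bigcup_{\tau=0}^{t-1}\{Y(\tau)>0\}.
\end{align*}
A union bound over the right-hand side, together with $\text{Pr}(Y(\tau)>0)\le p(\tau)$ and the Azuma estimate on $\widetilde Z$, gives $\text{Pr}(Z(t)\ge z)\le e^{-z^2/(2tc^2)} + \sum_{\tau=0}^{t-1} p(\tau)$, which is exactly the claim.

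The main obstacle I anticipate is verifying carefully that the truncated (stopped) process $\widetilde Z$ remains a supermartingale with the stated bounded-increment property for \emph{every} realization and every index $\tau$, not merely up to the stopping time: one must check that once $\tau \ge \tau^\ast$ the increment $\widetilde Z(\tau+1)-\widetilde Z(\tau)$ is identically zero (so the bound $c$ holds trivially), and that the optional-stopping/adaptedness argument goes through with $\mathcal{F}(0)=\{\emptyset,\Omega\}$ and $Z(0)=0$. A secondary subtlety is being explicit about which form of Azuma's inequality is used — the one-sided version valid for supermartingales — and confirming it only requires the increment bound $|\widetilde Z(\tau+1)-\widetilde Z(\tau)|\le c$ plus the supermartingale property, with no two-sided or symmetric assumption. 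Once those bookkeeping points are pinned down, the union-bound step is routine.
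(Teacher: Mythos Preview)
Your proposal is correct and follows essentially the same route as the paper: define the stopping time $\tau^\ast=\inf\{t\ge 0: Y(t)>0\}$, set $\widetilde Z(t)=Z(t\wedge\tau^\ast)$, verify it is a supermartingale with increments bounded by $c$ (zero once stopped, and at most $c$ before stopping by the hypothesis $\{|Z(\tau+1)-Z(\tau)|>c\}\subseteq\{Y(\tau)>0\}$), apply one-sided Hoeffding--Azuma to $\widetilde Z$, and finish with the inclusion $\{Z(t)\ge z\}\subseteq\{\widetilde Z(t)\ge z\}\cup\bigcup_{\tau=0}^{t-1}\{Y(\tau)>0\}$ and a union bound. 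The bookkeeping points you flag (increments after the stopping time, optional stopping preserving the supermartingale property, the one-sided Azuma form) are exactly the ones the paper checks explicitly.
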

\begin{proof}
See Appendix \ref{app:pf-lm-extended-Azuma-inequality}.
\end{proof}

Note that if $p(t)=0, \forall t\geq 0$, then $Z(t)$ is a supermartingale with differences bounded by $c$ and $\text{Pr}(\{ \vert Z(t+1) - Z(t) \vert> c\}) = 0, \forall t\geq 0$. In this case, Lemma \ref{lm:extended-Azuma-inequality} reduces to the conventional Hoeffding-Azuma inequality.

The next theorem summarizes the high probability regret performance of Algorithm \ref{alg:new-alg} and follows from Lemmas \ref{lm:drift-random-process-bound}-\ref{lm:extended-Azuma-inequality}  . 

\begin{Thm}[High Probability Regret Bound] \label{thm:high-prob-regret-bound}
Let $\mathbf{x}^{\ast}\in \mathcal{X}_{0}$ satisfy $\tilde{\mathbf{g}}(\mathbf{x}^{\ast}) \leq \mathbf{0}$, e.g., $\mathbf{x}^{\ast} = \argmin_{\mathbf{x}\in \mathcal{X}} \sum_{t=1}^{T} f^{t}(\mathbf{x})$.  Let $0<\lambda<1$ be arbitrary. If $V = \sqrt{T}$ and $\alpha = T$ in Algorithm \ref{alg:new-alg}, then for all $T\geq 1$, we have
\begin{align*}
\text{Pr}\Big(\sum_{t=1}^{T}f^{t}(\mathbf{x}(t))) \leq \sum_{t=1}^{T}f^{t}(\mathbf{x}^{\ast}) +  O(\sqrt{T}\log(T)\log^{1.5}(\frac{1}{\lambda}))\Big) \geq 1-\lambda.
\end{align*}
\end{Thm}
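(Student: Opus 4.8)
The plan is to combine Lemma~\ref{lm:deterministic-regret-bound} with the extended Azuma inequality of Lemma~\ref{lm:extended-Azuma-inequality} and the high-probability queue tail coming from part~(2) of Lemma~\ref{lm:drift-random-process-bound}. Taking $\mathbf{z}=\mathbf{x}^{\ast}$, $V=\sqrt{T}$ and $\alpha=T$ in Lemma~\ref{lm:deterministic-regret-bound}, term~(I) is deterministically $O(\sqrt{T})$, so everything reduces to controlling the ``noise'' term
\begin{align*}
Z(T):=\sum_{t=1}^{T}\sum_{k=1}^{m}Q_{k}(t)\,g_{k}^{t}(\mathbf{x}^{\ast}),\qquad Z(0):=0,
\end{align*}
namely showing that with probability at least $1-\lambda$ one has $Z(T)\leq O\!\big(T\log(T)\log^{1.5}(1/\lambda)\big)$; dividing by $V=\sqrt{T}$ then yields the claimed $O(\sqrt{T}\log(T)\log^{1.5}(1/\lambda))$ regret.

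First I would check that $\{Z(t)\}$ is a supermartingale with respect to $\{\mathcal{W}(t)\}$. Since $\sigma(\mathbf{Q}(t+1))\subseteq\mathcal{W}(t)$ while $g_{k}^{t+1}(\mathbf{x}^{\ast})=g_{k}(\mathbf{x}^{\ast};\omega(t+1))$ is independent of $\mathcal{W}(t)$ with $\mathbb{E}[g_{k}^{t+1}(\mathbf{x}^{\ast})]=\tilde g_{k}(\mathbf{x}^{\ast})\leq 0$, the same reasoning as in Lemma~\ref{lm:queue-independence} gives $\mathbb{E}[Z(t+1)-Z(t)\mid\mathcal{W}(t)]=\sum_{k}Q_{k}(t+1)\tilde g_{k}(\mathbf{x}^{\ast})\leq 0$. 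By Cauchy--Schwarz and Assumption~\ref{as:basic}, the increments obey $|Z(t+1)-Z(t)|=\big|\sum_{k}Q_{k}(t+1)g_{k}^{t+1}(\mathbf{x}^{\ast})\big|\leq G\|\mathbf{Q}(t+1)\|$, which is \emph{not} uniformly bounded; this is exactly why ordinary Hoeffding--Azuma does not apply and Lemma~\ref{lm:extended-Azuma-inequality} is needed.

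The bridge is the high-probability queue bound. Running Lemma~\ref{lm:queue-decrease-condition} through part~(2) of Lemma~\ref{lm:drift-random-process-bound} with $t_{0}=\lceil\sqrt{T}\rceil$ and a confidence level $\mu$ produces a threshold $\gamma=O(\sqrt{T}\log T)+O(\sqrt{T}\log(1/\mu))$ with $\mathrm{Pr}(\|\mathbf{Q}(t)\|\geq\gamma)\leq\mu$ for all $t\leq T+1$ --- precisely the quantity computed in the proof of Theorem~\ref{thm:high-prob-constraint-bound}. I would then set $c:=G\gamma$ and $Y(t):=\|\mathbf{Q}(t+1)\|-\gamma$ (which is $\mathcal{W}(t)$-measurable), so that $\{|Z(t+1)-Z(t)|>c\}\subseteq\{Y(t)>0\}$ and $p(t):=\mathrm{Pr}(Y(t)>0)\leq\mu$. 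Lemma~\ref{lm:extended-Azuma-inequality} gives $\mathrm{Pr}(Z(T)\geq z)\leq e^{-z^{2}/(2Tc^{2})}+T\mu$ for every $z>0$. Choosing $\mu=\lambda/(2T)$ makes the union-bound term $T\mu=\lambda/2$ and forces $\gamma=O(\sqrt{T}\log(T)\log(1/\lambda))$, hence $c=O(\sqrt{T}\log(T)\log(1/\lambda))$; then taking $z=c\sqrt{2T\log(2/\lambda)}=O(T\log(T)\log^{1.5}(1/\lambda))$ makes $e^{-z^{2}/(2Tc^{2})}=\lambda/2$, so $\mathrm{Pr}(Z(T)\geq z)\leq\lambda$. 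Substituting back into Lemma~\ref{lm:deterministic-regret-bound} and absorbing the $O(\sqrt{T})$ of term~(I) completes the proof.

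The main obstacle is reconciling the two probabilistic tools: Lemma~\ref{lm:extended-Azuma-inequality} requires a single deterministic constant $c$ bounding the ``typical'' jumps, but the jump size $G\|\mathbf{Q}(t+1)\|$ is random, so $c$ must be taken equal to the high-probability queue bound $\gamma$ --- which itself grows with $\log(1/\mu)$, and $\mu$ in turn feeds the union term $\sum_{\tau}p(\tau)$. One therefore has to tune $\mu$ as a function of $T$ and $\lambda$ so that simultaneously $T\mu\leq\lambda/2$ and $c=G\gamma$ stays only polylogarithmically large; this balancing is what generates the extra $\log(T)$ factor (from $t_{0}=\lceil\sqrt T\rceil$ and $\log(T/\lambda)$) and the $\log^{1.5}(1/\lambda)$ dependence (one power from $\gamma$, a further $\log^{1/2}(1/\lambda)$ from the sub-Gaussian tail). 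A secondary technical point to verify is that $\mathbf{x}^{\ast}$, though it may depend on the realized loss functions, still has the independence from $\omega(t)$ needed for the supermartingale property --- the same hypothesis already invoked in Lemma~\ref{lm:queue-independence}.
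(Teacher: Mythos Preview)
Your proposal is correct and follows essentially the same route as the paper: define $Z(t)=\sum_{\tau\le t}\sum_k Q_k(\tau)g_k^{\tau}(\mathbf{x}^\ast)$, verify it is a $\{\mathcal{W}(t)\}$-supermartingale with increments bounded by $G\|\mathbf{Q}(t+1)\|$, feed the high-probability queue tail from Lemma~\ref{lm:drift-random-process-bound}(2) (via Lemma~\ref{lm:queue-decrease-condition} with $t_0=\lceil\sqrt{T}\rceil$) into Lemma~\ref{lm:extended-Azuma-inequality} with $\mu=\lambda/(2T)$, and then plug the resulting bound on term~(II) into Lemma~\ref{lm:deterministic-regret-bound}. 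Your parameter tuning $z=c\sqrt{2T\log(2/\lambda)}$ is in fact slightly cleaner than the paper's choice (it makes the sub-Gaussian term exactly $\lambda/2$), and your remark about verifying the independence assumption on $\mathbf{x}^\ast$ is a valid caveat that the paper handles implicitly.
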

\vspace{-1.4em}
\begin{proof}
See Appendix \ref{app:thm-high-prob-regret-bound}.
\end{proof}
\section{Experiment: Online Job Scheduling in Distributed Data Centers}
Consider a geo-distributed data center infrastructure consisting of one front-end job router and $100$ geographically distributed servers, which are located at $10$ different zones to form $10$ clusters ($10$ servers in each cluster). See Fig. \ref{fig}(a) for an illustration. The front-end job router receives job tasks and schedules them to different servers to fulfill the service. To serve the assigned jobs, each server purchases power (within its capacity) from its zone market.  Electricity market prices can vary significantly across time and zones. For example, see Fig. \ref{fig}(b) for a $5$-minute average electricity price trace (between $05/01/2017$ and $05/10/2017$) at New York zone CENTRL \cite{NYISO}. This problem is to schedule jobs and control power levels at each server in real time such that all incoming jobs are served and electricity cost is minimized. In our experiment, each server power is adjusted every $5$ minutes, which is called a slot. (In practice, server power can not be adjusted too frequently due to hardware restrictions and configuration delay.)  Let $\mathbf{x}(t) = [x_1(t), \ldots, x_{100}(t)]$ be the power vector at slot $t$, where each $x_i (t)$ must be chosen from an interval $[x_i^{\min}, x_i^{\max}]$  restricted by the hardware, and service rate at each server $i$ satisfy $\mu_i (t) = h_i (x_i(t))$, where $h_i(\cdot)$ is an increasing concave function. At each slot $t$, the job router schedules $\mu_i(t)$ amount of jobs to server $i$.  The electricity cost at slot $t$ is $f^t(\mathbf{x}(t)) = \sum_{i=1}^{100} c_i(t) x_i(t)$ where $c_i(t)$ is the electricity price at server $i$'s zone.  Use $c_i(t)$ from real-world $5$-minute average electricity price data at $10$ different zones in New York city between $05/01/2017$ and $05/10/2017$ obtained from NYISO \cite{NYISO}.  At each slot $t$, the incoming job is given by $\omega(t)$ and satisfies a Poisson distribution. Note that the amount of incoming jobs and electricity price $c_i(t)$ are unknown to us at the beginning of each slot $t$ but can be observed at the end of each slot.  This is an example of OCO with stochastic constraints, where we aim to minimize the electricity cost subject to the constraint that incoming jobs must be served in time. In particular, at each round $t$, we receive loss function $f^t(\mathbf{x}(t))$ and constraint function $g^t(\mathbf{x}(t)) = \omega(t) - \sum_{i=1}^{100} h_i(x_i(t))$. 

We compare our proposed algorithm with $3$ baselines:  (1) best fixed decision in hindsight; (2) react \cite{gandhi2012sleep} and (3) low-power \cite{qureshi2009cutting}.  Both ``react" and ``low-power" are popular power control strategies used in distributed data centers.  See Supplement \ref{app:experiment} for more details of  these $2$ baselines and our experiment. Fig. \ref{fig}(c)(d) plot the performance of $4$ algorithms, where the running average is the time average up to the current slot. Fig. \ref{fig}(c) compares electricity cost while Fig. \ref{fig}(d) compares unserved jobs. (Unserved jobs accumulated if the service rate provided by an algorithm is less than the job arrival rate, i.e., the stochastic constraint is violated.) Fig. \ref{fig}(c)(d)  show that our proposed algorithm performs closely to the best fixed decision in hindsight over time, both in electricity cost and constraint violations. `React" performs well in serving job arrivals but yields larger electricity cost, while ``low-power" has low electricity cost but fails to serve job arrivals. 
\vspace{-0.5em}

\begin{figure*}[ht!] 
    \centering
    \begin{subfigure}[t]{0.5\textwidth}
        \centering
        \includegraphics[height=7cm] {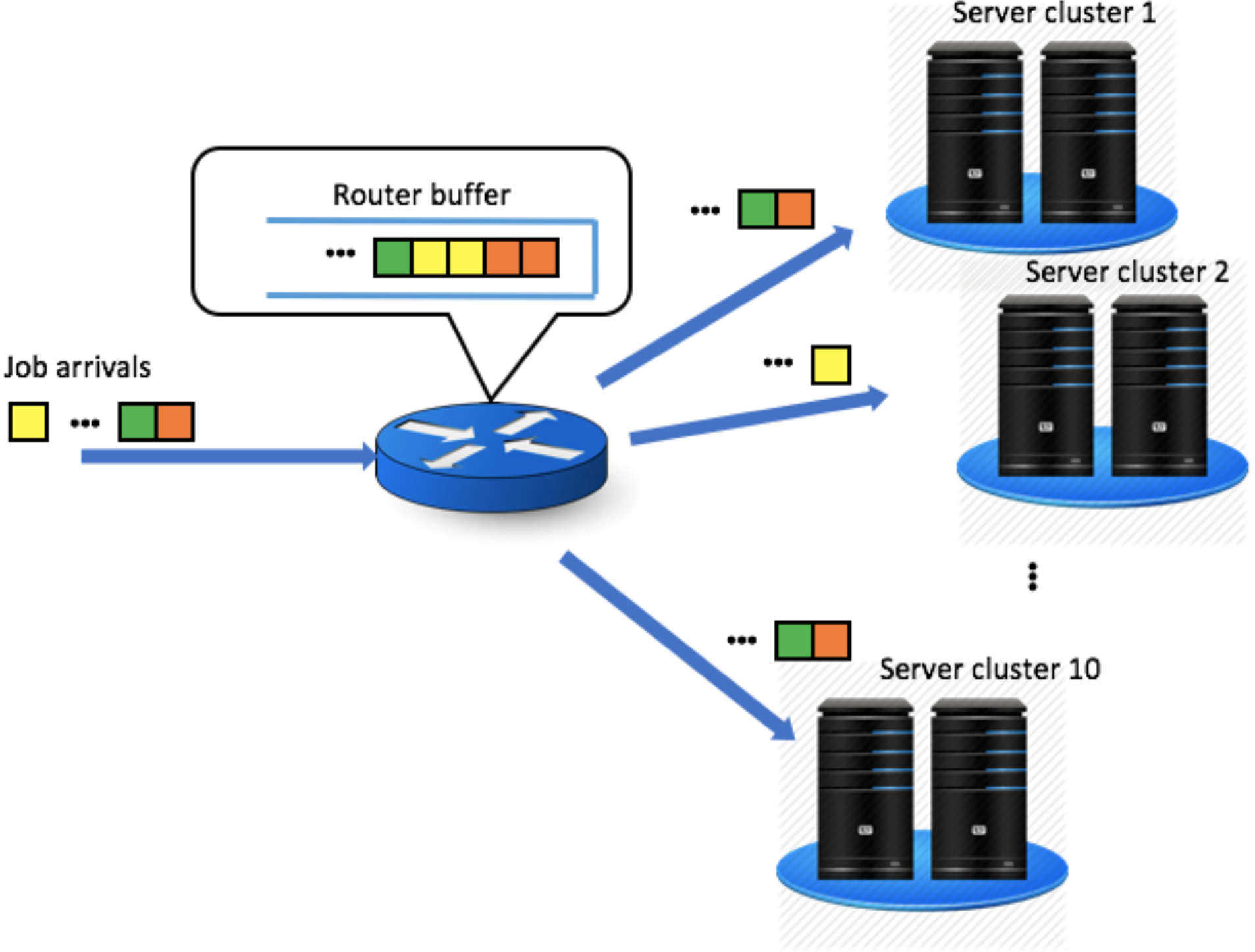}
        \caption{}
    \end{subfigure}%
    ~ 
    \begin{subfigure}[t]{0.5\textwidth}
        \centering
        \includegraphics[height=7cm] {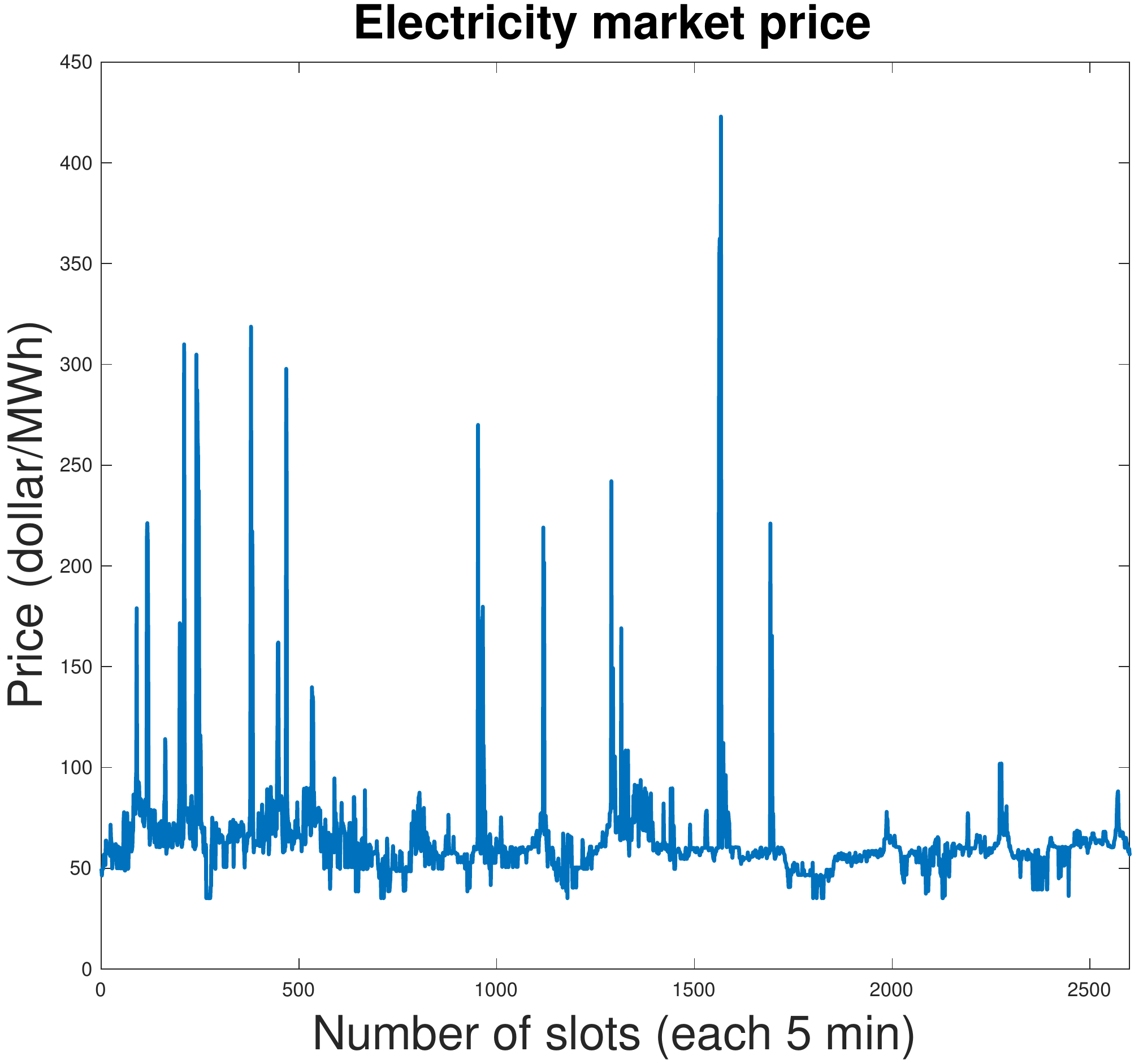}
        \caption{}
    \end{subfigure}
    ~
    \begin{subfigure}[t]{0.5\textwidth}
        \centering
        \includegraphics[height=7cm] {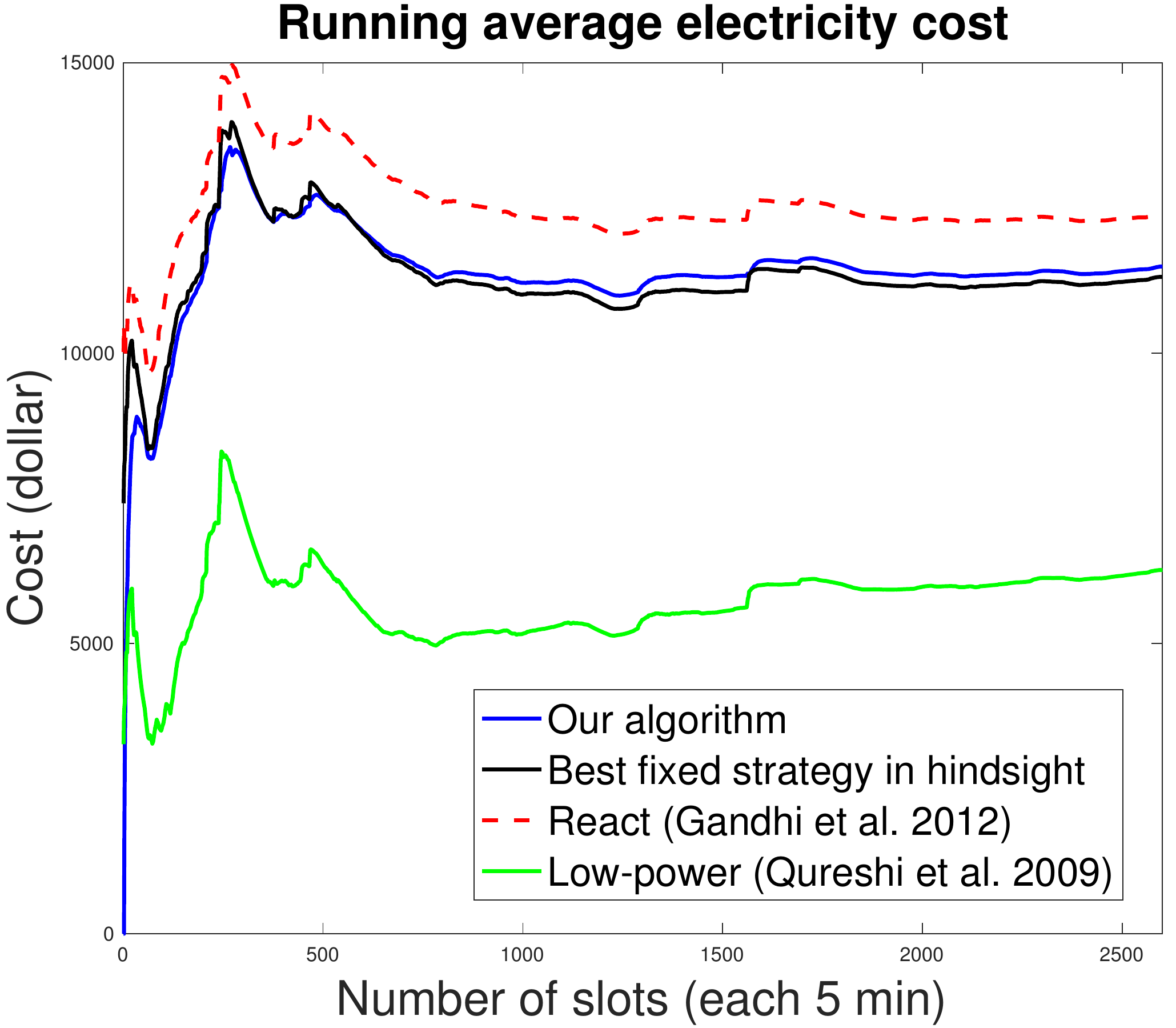}
        \caption{}
    \end{subfigure}%
    ~      
    \begin{subfigure}[t]{0.5\textwidth}
        \centering
        \includegraphics[height=7cm] {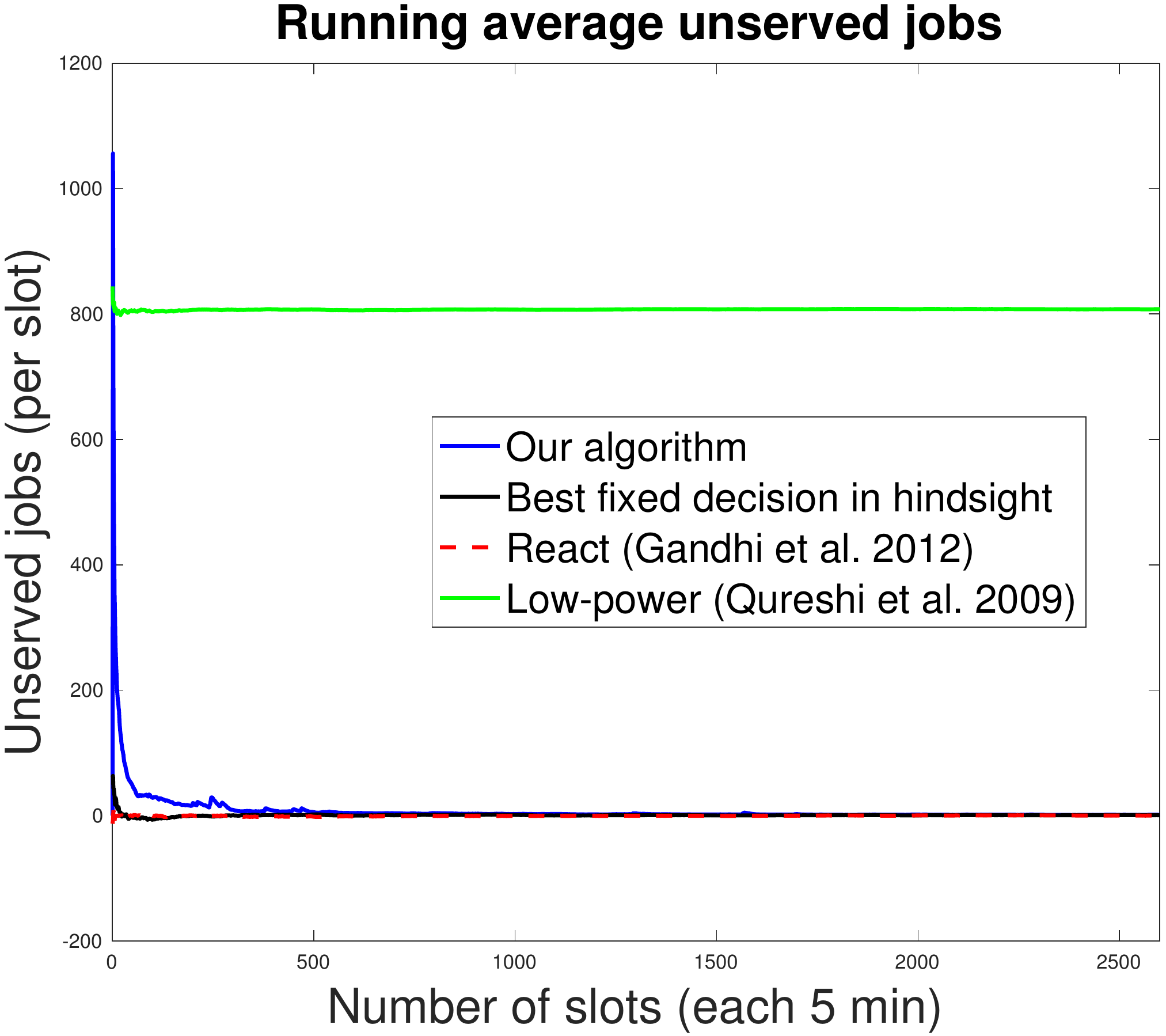}
        \caption{}
    \end{subfigure}   
    \caption{(a) Geo-distributed data  center infrastructure;  (b) Electricity market prices at zone CENTRAL New York; (c) Running average electricity cost; (d) Running average unserved jobs.}\label{fig}
\end{figure*}

\section{Conclusion}

This paper studies OCO with stochastic constraints and proposes a novel learning algorithm that guarantees $O(\sqrt{T})$ expected regret and constraint violations and $O(\sqrt{T}\log(T))$ high probability regret and constraint violations.

\newpage

\appendices

\section{Proof of Lemma \ref{lm:drift-random-process-bound}} \label{app:pf-lm-random-process-bound}

In this proof, we first  establish an upper bound of $\mathbb{E}[e^{r Z(t)}]$ for some constant $r>0$. Part (1) of this lemma follows  by applying Jensen's inequality since $e^{rx}$ is convex with respect to $x$ when $r>0$.  Part (2) of this lemma follows directly from Markov's inequality.

The following fact is useful in the proof. 
\begin{Fact}\label{fact:exp-inequality}
$e^{x} \leq 1+ x + 2x^{2}$ for any $|x|\leq 1$.
\end{Fact}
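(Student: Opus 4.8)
\textbf{Proof proposal for Fact \ref{fact:exp-inequality}.}

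The plan is to expand $e^{x}$ in its Taylor series and control the tail beyond the linear term. Write $e^{x} = 1 + x + \sum_{n=2}^{\infty} x^{n}/n!$, so the claim reduces to showing that $\sum_{n=2}^{\infty} x^{n}/n! \le 2x^{2}$ whenever $|x| \le 1$.

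First I would bound the series termwise by absolute values: $\sum_{n=2}^{\infty} x^{n}/n! \le \sum_{n=2}^{\infty} |x|^{n}/n!$. Then, since $|x| \le 1$ forces $|x|^{n} \le |x|^{2} = x^{2}$ for every integer $n \ge 2$, I would factor out $x^{2}$ to obtain $\sum_{n=2}^{\infty} |x|^{n}/n! \le x^{2} \sum_{n=2}^{\infty} 1/n! = (e-2)x^{2}$. Because $e - 2 < 1 \le 2$, this already yields the (slightly stronger) bound $e^{x} \le 1 + x + x^{2} \le 1 + x + 2x^{2}$, which proves the fact.

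As an alternative that avoids infinite series, one can argue by calculus: set $\phi(x) = 1 + x + 2x^{2} - e^{x}$ on $[-1,1]$, observe that $\phi(0) = \phi'(0) = 0$ and $\phi''(x) = 4 - e^{x} > 0$ on $[-1,1]$ (using $e < 4$), so $\phi'$ is strictly increasing with its unique zero at $x=0$; hence $\phi$ is decreasing on $[-1,0]$ and increasing on $[0,1]$, attaining its minimum over $[-1,1]$ at $x=0$ where $\phi(0)=0$, so $\phi \ge 0$ throughout. There is essentially no obstacle here: the only point needing a moment's care is that the termwise estimate $|x|^{n} \le x^{2}$ uses both $n \ge 2$ and $|x| \le 1$, and the resulting constant $e-2 \approx 0.718$ leaves ample slack against the required bound $2$.
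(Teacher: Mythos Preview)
Your proof is correct. The paper takes a slightly different route: it invokes Taylor's theorem with the Lagrange form of the remainder, writing $e^{x} = 1 + x + e^{\hat{x}}\,x^{2}/2$ for some $\hat{x}$ between $0$ and $x$, and then uses $|\hat{x}|\le 1 \Rightarrow e^{\hat{x}} \le e \le 4$ to obtain $e^{x} \le 1 + x + 2x^{2}$ in one stroke. Your first argument instead bounds the entire Taylor tail and gets the sharper constant $e-2<1$; your second argument is a self-contained convexity computation that avoids any appeal to Taylor's theorem. All three are equally valid; the paper's version is the most economical (one line plus the crude estimate $e\le 4$), whereas your series estimate buys a tighter inequality and your calculus argument is perhaps the most elementary.
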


\begin{proof}
By Taylor's expansion, we known for any $x\in \mathbb{R}$,  there exists a point $\hat{x}$ in between $0$ and $x$ such that $e^{x} = 1+ x + e^{\hat{x}}\frac{x^2}{2}$. (Note that the value of $\hat{x}$ depends on $x$ and if $x>0$, then $\hat{x}\in (0, x)$; if $x<0$, then $\hat{x}\in (x,0)$; and if $x=0$, then $\hat{x}=x$. )  Since $|x|\leq 1$, we have $e^{\hat{x}} \leq e \leq 4$. Thus,  $e^{x} \leq 1+ x+ 2x^2$ for any $|x|\leq 1$. 
\end{proof}

The next lemma provides an upper bound of $\mathbb{E}[e^{r Z(t)}]$ with constant $r = \frac{\zeta}{4\delta_{\max}^{2}}<1$.
\begin{Lem}
\begin{align*}
\mathbb{E}[e^{rZ(t)}] \leq \frac{e^{rt_{0}\delta_{\max}}}{1-\rho} e^{r\theta} + \rho^{\lceil \frac{t}{t_{0}}\rceil}, \forall t\in\{0,1,\ldots\},
\end{align*}
where $r = \frac{\zeta}{4t_{0}\delta_{\max}^{2}}$, $\rho = 1- \frac{\zeta^{2}}{8\delta_{\max}^{2}} = 1 - \frac{rt_{0}\zeta}{2}$ and where $\lceil x \rceil$ denotes the smallest integer that is no less than $x$. 
\end{Lem}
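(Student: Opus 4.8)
The plan is to convert the $t_0$-step drift hypothesis into a geometric recursion for $a_t := \mathbb{E}[e^{rZ(t)}]$ and then unroll it. Fix $t$ and write $e^{rZ(t+t_0)} = e^{rZ(t)}\,e^{r(Z(t+t_0)-Z(t))}$. Telescoping the one-step increments gives $|Z(t+t_0)-Z(t)| \le t_0\delta_{\max}$, so $|r(Z(t+t_0)-Z(t))| \le rt_0\delta_{\max} = \frac{\zeta}{4\delta_{\max}} \le \frac14 \le 1$, where the last inequality uses $\zeta\le\delta_{\max}$. Hence Fact \ref{fact:exp-inequality} applies with $x = r(Z(t+t_0)-Z(t))$, giving $e^{r(Z(t+t_0)-Z(t))} \le 1 + r(Z(t+t_0)-Z(t)) + 2r^2t_0^2\delta_{\max}^2$. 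Multiplying by the $\mathcal{F}(t)$-measurable factor $e^{rZ(t)}$ and taking the conditional expectation given $\mathcal{F}(t)$ yields
$$\mathbb{E}[e^{rZ(t+t_0)}\,|\,\mathcal{F}(t)] \le e^{rZ(t)}\left(1 + r\,\mathbb{E}[Z(t+t_0)-Z(t)\,|\,\mathcal{F}(t)] + 2r^2t_0^2\delta_{\max}^2\right).$$

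Next I would split according to the drift condition \eqref{eq:stochastic-process-drift-condition}. On $\{Z(t)\ge\theta\}$ the conditional drift is at most $-t_0\zeta$, so the parenthetical factor is at most $1 - rt_0\zeta + 2r^2t_0^2\delta_{\max}^2$; substituting $r = \frac{\zeta}{4t_0\delta_{\max}^2}$ this is exactly $1 - \frac{\zeta^2}{4\delta_{\max}^2} + \frac{\zeta^2}{8\delta_{\max}^2} = 1 - \frac{\zeta^2}{8\delta_{\max}^2} = \rho$, and $0<\rho<1$ because $0<\zeta\le\delta_{\max}$; thus $\mathbb{E}[e^{rZ(t+t_0)}\,|\,\mathcal{F}(t)] \le \rho\,e^{rZ(t)}$ there. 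On $\{Z(t)<\theta\}$ I would discard the drift information and use only $Z(t+t_0) \le Z(t) + t_0\delta_{\max} < \theta + t_0\delta_{\max}$, which gives $\mathbb{E}[e^{rZ(t+t_0)}\,|\,\mathcal{F}(t)] \le e^{r(\theta+t_0\delta_{\max})}$. Since both right-hand sides are nonnegative, the two cases are jointly covered by $\mathbb{E}[e^{rZ(t+t_0)}\,|\,\mathcal{F}(t)] \le \rho\,e^{rZ(t)} + e^{r(\theta+t_0\delta_{\max})}$, valid for all $t$. Taking total expectations and writing $C := e^{rt_0\delta_{\max}}e^{r\theta}$ gives the scalar recursion $a_{t+t_0}\le\rho\,a_t + C$.

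Finally I would unroll this recursion by induction on $t$. For the base block $t\in\{0,1,\dots,t_0\}$ (using the starting value $Z(0)=0$, consistent with $\Vert\mathbf{Q}(1)\Vert=0$ in the application) the bounded increments give $Z(t)\le t_0\delta_{\max}$, so $a_t\le e^{rt_0\delta_{\max}}\le \frac{C}{1-\rho}\le\frac{C}{1-\rho}+\rho^{\lceil t/t_0\rceil}$, where I used $e^{r\theta}\ge1$ and $0<\rho<1$. For $t>t_0$, since $\lceil(t-t_0)/t_0\rceil = \lceil t/t_0\rceil - 1$, the inductive hypothesis together with $a_t\le\rho\,a_{t-t_0}+C$ gives
$$a_t \le \rho\left(\frac{C}{1-\rho} + \rho^{\lceil t/t_0\rceil - 1}\right) + C = \frac{C}{1-\rho} + \rho^{\lceil t/t_0\rceil},$$
which is the claimed bound (and parts (1) and (2) of Lemma \ref{lm:drift-random-process-bound} then follow by Jensen's and Markov's inequalities, as noted above). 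I expect the main obstacle to be the case analysis: one has to realize that in the region $Z(t)<\theta$ the drift hypothesis is useless, so the exponential there must be controlled purely through the bounded-increment assumption, and one must verify that Fact \ref{fact:exp-inequality} is genuinely applicable — this is exactly where $\zeta\le\delta_{\max}$ enters — after which the algebraic collapse of $1 - rt_0\zeta + 2r^2t_0^2\delta_{\max}^2$ to $\rho$ is a one-line computation.
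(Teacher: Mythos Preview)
Your proof is correct and follows essentially the same approach as the paper: expand $e^{rZ(t+t_0)}=e^{rZ(t)}e^{r(Z(t+t_0)-Z(t))}$, apply Fact~\ref{fact:exp-inequality} using $|r(Z(t+t_0)-Z(t))|\le \zeta/(4\delta_{\max})\le 1$, split on $\{Z(t)\ge\theta\}$ versus $\{Z(t)<\theta\}$ to obtain the recursion $a_{t+t_0}\le\rho a_t+e^{rt_0\delta_{\max}}e^{r\theta}$, and unroll by induction. The only cosmetic differences are that the paper bounds the sub-threshold case by $e^{rt_0\delta_{\max}}e^{rZ(t)}$ first and then merges the two cases via an explicit decomposition of the expectation, whereas you go directly to the constant bound $e^{r(\theta+t_0\delta_{\max})}$ and merge by adding both nonnegative bounds pointwise; either route yields the identical scalar recursion.
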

\begin{proof}
Since $0<\zeta <\delta_{\max}$, we have $0<\rho<1 < e^{r\delta_{\max}}$. Define $\eta(t) = Z(t+t_{0}) - Z(t)$. Note that $|\eta(t)| \leq t_{0}\delta_{\max}, \forall t\geq 0$ and  $|r\eta(t) | \leq \frac{\zeta}{4 t_{0}\delta_{\max}^2} t_{0}\delta_{\max} = \frac{\zeta}{4\delta_{\max}} \leq 1$. Then, 
\begin{align}
e^{rZ(t+t_{0})} =& e^{rZ(t)} e^{r\eta(t)}  \label{eq:pf-lm-random-process-bound-eq0}\\
\overset{(a)}{\leq}& e^{rZ(t)} [1 +r\eta(t) + 2r^2 t_{0}^{2}\delta_{\max}^2] \nonumber \\
\overset{(b)}{=}& e^{rZ(t)} [1 +r\eta(t) + \frac{1}{2}r t_{0}\zeta],  \label{eq:pf-lm-random-process-bound-eq1}
\end{align}
where (a) follows from Fact \ref{fact:exp-inequality} by noting that $|r\eta(t)|\leq 1$ and $|\eta(t)|\leq t_{0}\delta_{\max}$; and (b) follows by substituting $r = \frac{\zeta}{4t_{0}\delta_{\max}^{2}}$ into a single $r$ of the term $2r^{2}t_{0}^{2}\delta_{\max}^{2}$.

Next, consider the cases $Z(t)\geq \theta $ and $Z(t)<\theta$, separately.
\begin{itemize}[leftmargin=20pt]
\item Case $Z(t)\geq \theta$:  Taking conditional expectations on both sides  of \eqref{eq:pf-lm-random-process-bound-eq1} yields:
\begin{align*}
\mathbb{E}[e^{rZ(t+t_{0})} | Z(t)] \leq& \mathbb{E} [e^{rZ(t)} (1 +r\eta(t) + \frac{1}{2}rt_{0}\zeta) | Z(t)] \\
\overset{(a)}{\leq}& e^{rZ(t)} \big[ 1 -rt_{0}\zeta + \frac{1}{2}rt_{0}\zeta\big]\\
=& e^{rZ(t)} \big[ 1- \frac{rt_{0}\zeta}{2}\big ]\\
\overset{(b)}{=}& \rho e^{rZ(t)}.
\end{align*}
where (a) follows from the fact that $\mathbb{E}[Z(t+t_{0}) - Z(t)|\mathcal{F}(t)] \leq -t_{0}\zeta$ when $Z(t) \geq \theta$; and (b) follows from the fact that $\rho = 1 - \frac{rt_{0}\zeta}{2}$. 
\item Case $Z(t) < \theta$: Taking conditional expectations on both sides  of \eqref{eq:pf-lm-random-process-bound-eq0} yields:
\begin{align*}
\mathbb{E}[e^{rZ(t+t_{0})} | Z(t) ] = &\mathbb{E}[e^{rZ(t)}e^{r\eta(t)} | Z(t)] \\
=& e^{rZ(t)} \mathbb{E}[e^{r\eta(t)}| Z(t)]\\
\overset{(a)}{\leq} & e^{rt_{0}\delta_{\max}}e^{rZ(t)}, 
\end{align*}
where (a) follows from the fact that $\eta(t) \leq t_{0}\delta_{\max}$.
\end{itemize}
Putting two cases together yields:
\begin{align}
\mathbb{E}[e^{rZ(t+t_{0})}] \overset{(a)}{=} & \text{Pr}(Z(t) \geq \theta) \mathbb{E}[e^{rZ(t+t_{0})} | Z(t) \geq \theta] + \text{Pr}(Z(t) < \theta) \mathbb{E}[e^{rZ(t+t_{0})} | Z(t) < \theta] \nonumber\\
\overset{(b)}{\leq}& \rho \mathbb{E}[e^{rZ(t)}| Z(t) \geq \theta] \text{Pr}(Z(t) \geq \theta) + e^{rt_{0}\delta_{\max}} \mathbb{E}[e^{rZ(t)}| Z(t) < \theta] \text{Pr}(Z(t) < \theta) \nonumber\\\
\overset{(c)}{=}& \rho \mathbb{E}[e^{rZ(t)}] + [e^{r t_{0}\delta_{\max}} - \rho] \mathbb{E}[e^{rZ(t)}| Z(t) < \theta] \text{Pr}(Z(t) < \theta) \nonumber\\\
\overset{(d)}{\leq} & \rho \mathbb{E}[e^{rZ(t)}] + [e^{rt_{0}\delta_{\max}} - \rho] e^{r\theta}\nonumber\\
\leq & \rho \mathbb{E}[e^{rZ(t)}] + e^{rt_{0}\delta_{\max}} e^{r\theta}, \label{eq:pf-lm-random-process-bound-eq2}
\end{align}
where (a) follows by the definition of expectations; (b) follows from the results in the above two cases; (c) follows from the fact that $\mathbb{E}[e^{rZ(t)}] = \text{Pr}(Z(t)\geq \theta) \mathbb{E}[e^{rZ(t)}| Z(t)\geq \theta] +  \text{Pr}(Z(t)< \theta) \mathbb{E}[e^{rZ(t)}| Z(t)< \theta]$; and (d) follow from the fact that $e^{rt_{0}\delta_{\max}}> \rho$.  

Now, we prove $\mathbb{E}[e^{rZ(t)}] \leq \frac{e^{rt_{0}\delta_{\max}}}{1-\rho} e^{r\theta} + \rho^{\lceil \frac{t}{t_{0}}\rceil}, \forall t\geq 0$ , where $\lceil x \rceil$ denotes the smallest integer that is no less than $x$, by inductions. Since $Z(\tau) \leq \tau \delta_{\max}, \forall \tau \geq  0$, it follows that $\mathbb{E}[e^{rZ(\tau)}] \leq e^{r \tau \delta_{\max}} \leq e^{r t_{0} \delta_{\max}} \leq \frac{e^{r t_{0}\delta_{\max}}}{1-\rho} e^{r\theta} + \rho^{\lceil \frac{\tau}{t_{0}}\rceil}, \forall \tau\in\{1,\ldots, t_{0}\}$, where the last inequality follows because $\frac{e^{r\theta}}{1-\rho} \geq 1$ and $ \rho^{\lceil \frac{\tau}{t_{0}}\rceil} \geq 1$. 

Fix $\tau \in\{1,\ldots, t_{0}\}$. Assume $\mathbb{E}[e^{rZ(nt_{0} + \tau)}] \leq \frac{e^{r t_{0}\delta_{\max}}}{1-\rho} e^{r\theta} + \rho^{n+1}$ holds. Note that the base case $n=0$ is just proven above. Consider $\mathbb{E}[e^{rZ((n+1)t_{0}+\tau)}]$.  By \eqref{eq:pf-lm-random-process-bound-eq2}, we have
\begin{align*}
\mathbb{E}[e^{rZ((n+1)t_{0} + \tau)}]  \leq& \rho \mathbb{E}[e^{rZ(nt_{0}+\tau)}] + e^{rt_{0}\delta_{\max}} e^{r\theta} \\
\leq & \rho [\frac{e^{rt_{0}\delta_{\max}}}{1-\rho} e^{r\theta} + \rho^{n+1}] +  e^{rt_{0}\delta_{\max}} e^{r\theta}\\
\leq &  e^{rt_{0}\delta_{\max}} e^{r\theta} [\frac{\rho}{1-\rho} + 1] + \rho^{n+2}\\
= &\frac{e^{rt_{0}\delta_{\max}} }{1-\rho} e^{r\theta}+ \rho^{n+2}.
\end{align*}
Thus, this lemma follows by inductions.
\end{proof}

By this  lemma, for all $t\in\{0,1,\ldots\}$, we have 
\begin{align}
\mathbb{E}[e^{rZ(t)}] \leq& \frac{e^{rt_{0}\delta_{\max}}}{1-\rho} e^{r\theta} + \rho^{\lceil \frac{t}{t_{0}}\rceil} \nonumber\\
\overset{(a)}{\leq} & \frac{e^{rt_{0}\delta_{\max}} }{1-\rho} e^{r\theta} + 1 \nonumber\\
\overset{(b)}{\leq} &  \frac{e^{rt_{0}\delta_{\max}}}{1-\rho} e^{r\theta} + e^{r\theta} \nonumber\\
= & e^{r\theta} \big[\frac{e^{rt_{0}\delta_{\max}}}{1-\rho} +1\big], \label{eq:pf-lm-random-process-bound-eq3}
\end{align}
where (a) follows from the fact that $0<\rho<1$; and (b) follows from the facts that $r>0$ and $\theta>0$.

{\bf Proof of Part (1):} Note that $e^{rx}$ is convex with respect to $x$ when $r>0$. By Jensen's inequality,
\begin{align}
e^{r\mathbb{E}[Z(t)]} \leq &\mathbb{E}[e^{rZ(t)}] \nonumber \\
\overset{(a)}{\leq} & e^{r\theta} \big[\frac{e^{rt_{0}\delta_{\max}}}{1-\rho} +1\big], \label{eq:pf-lm-random-process-bound-eq4}
\end{align}
where (a) follows from \eqref{eq:pf-lm-random-process-bound-eq3}. 

Taking logarithm on both sides and dividing by $r$ yields:
\begin{align*}
\mathbb{E}[Z(t)] \leq& \theta + \frac{1}{r} \log\big[1 + \frac{e^{rt_{0}\delta_{\max}}}{1-\rho}\big] \\
\overset{(a)}{=}& \theta + t_{0}\frac{4\delta_{\max}^{2}}{\zeta} \log\big[1 + \frac{8\delta_{\max}^{2}e^{\zeta/(4\delta_{\max})}}{\zeta^{2}}\big],
\end{align*}
where (a) follows by recalling that $r = \frac{\zeta}{4t_{0}\delta_{\max}^{2}}$ and $\rho = 1- \frac{\zeta^{2}}{8\delta_{\max}^{2}}$.

{\bf Proof of Part (2):} 
Fix $z$. Note that 
\begin{align}
\text{Pr} (Z(t) > z) =& \text{Pr}(e^{rZ(t)} > e^{rz}) \nonumber \\
\overset{(a)}{\leq}& \frac{\mathbb{E}[e^{rZ(t)}]}{e^{rz}} \nonumber \\
\overset{(b)}{\leq}& e^{r\theta} e^{-rz} \big[\frac{e^{rt_{0}\delta_{\max}}}{1-\rho} +1\big] \nonumber \\
\overset{(c)}{=}& e^{\frac{\zeta}{4t_0 \delta_{\max}^2}(\theta - z)} \big[ 1 + \frac{8\delta_{\max}^{2}e^{\zeta/(4\delta_{\max})}}{\zeta^{2}}\big] \label{eq:pf-lm-random-process-bound-eq5}
\end{align}
where (a) follows from Markov's inequality; (b) follows from \eqref{eq:pf-lm-random-process-bound-eq3}; and (c) follows by recalling that $r = \frac{\zeta}{4t_{0}\delta_{\max}^{2}}$ and $\rho = 1- \frac{\zeta^{2}}{8\delta_{\max}^{2}}$.

Define $\mu = e^{\frac{\zeta}{4t_0 \delta_{\max}^2}(\theta - z)} \big[ 1 + \frac{8\delta_{\max}^{2}e^{\zeta/(4\delta_{\max})}}{\zeta^{2}}\big]$. It follows that if $$z =  \theta + t_{0}\frac{4\delta_{\max}^{2}}{\zeta} \log\big[1 + \frac{8\delta_{\max}^{2}e^{\zeta/(4\delta_{\max})}}{\zeta^{2}}\big] +  t_{0}\frac{4\delta_{\max}^{2}}{\zeta} \log(\frac{1}{\mu}),$$ then we have $\text{Pr}(Z(t) \geq z) \leq \mu$ by \eqref{eq:pf-lm-random-process-bound-eq5}.

\section{Proof of Lemma \ref{lm:queue-decrease-condition}} \label{app:pf-queue-decrease-condition}

The next lemma will be useful in our proof.
\begin{Lem} \label{lm:Slater-negative}
Let $\hat{\mathbf{x}}\in \mathcal{X}_{0}$ be a Slater point defined in Assumption \ref{as:interior-point}, i.e, $\tilde{g}_{k}(\hat{\mathbf{x}}) = \mathbb{E}_{\omega} [g_{k}(\hat{\mathbf{x}}; \omega)]\leq -\epsilon, \forall k\in\{1,2,\ldots, m\}$. Then
\begin{align*}
\mathbb{E}[\sum_{k=1}^m Q_k (t_{1}) g^{t_{1}}_{k}(\hat{\mathbf{x}}) | \mathcal{W}(t_{2})]  \leq  -\epsilon  \mathbb{E}[\Vert\mathbf{Q}(t_{1})\Vert | \mathcal{W}(t_{2})], \quad  \forall t_{2} \leq t_{1}-1
\end{align*}
where $\epsilon>0$ is defined in Assumption \ref{as:interior-point}.
\end{Lem}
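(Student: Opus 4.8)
The plan is to exploit the same independence structure used in Lemma \ref{lm:queue-independence}: condition first on the finer $\sigma$-algebra $\mathcal{W}(t_1-1)$, under which $\mathbf{Q}(t_1)$ is deterministic, and then push the bound down to $\mathcal{W}(t_2)$ by the tower property. Concretely, I would first recall from the discussion preceding Lemma \ref{lm:queue-independence} that $\sigma(\mathbf{Q}(t_1))\subseteq \mathcal{W}(t_1-1)$, while $g_k^{t_1}(\hat{\mathbf{x}}) = g_k(\hat{\mathbf{x}};\omega(t_1))$ depends only on $\omega(t_1)$, which is independent of $\mathcal{W}(t_1-1)$ because the samples $\omega(t)$ are i.i.d.

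The first key step is then: since $\mathbf{Q}(t_1)$ is $\mathcal{W}(t_1-1)$-measurable and $\omega(t_1)$ is independent of $\mathcal{W}(t_1-1)$,
\begin{align*}
\mathbb{E}\Big[\sum_{k=1}^m Q_k(t_1) g_k^{t_1}(\hat{\mathbf{x}}) \,\Big|\, \mathcal{W}(t_1-1)\Big] = \sum_{k=1}^m Q_k(t_1)\, \mathbb{E}\big[g_k(\hat{\mathbf{x}};\omega(t_1))\,\big|\,\mathcal{W}(t_1-1)\big] = \sum_{k=1}^m Q_k(t_1)\,\tilde{g}_k(\hat{\mathbf{x}}).
\end{align*}
Using the Slater condition $\tilde{g}_k(\hat{\mathbf{x}})\leq -\epsilon$ together with $Q_k(t_1)\geq 0$ gives $\sum_k Q_k(t_1)\tilde{g}_k(\hat{\mathbf{x}}) \leq -\epsilon \sum_k Q_k(t_1)$, and then the elementary norm inequality $\sum_k Q_k(t_1) = \Vert \mathbf{Q}(t_1)\Vert_1 \geq \Vert \mathbf{Q}(t_1)\Vert$ (valid since $\mathbf{Q}(t_1)\geq \mathbf{0}$) yields
\begin{align*}
\mathbb{E}\Big[\sum_{k=1}^m Q_k(t_1) g_k^{t_1}(\hat{\mathbf{x}}) \,\Big|\, \mathcal{W}(t_1-1)\Big] \leq -\epsilon \Vert \mathbf{Q}(t_1)\Vert.
\end{align*}

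The final step is to take $\mathbb{E}[\,\cdot\mid \mathcal{W}(t_2)]$ on both sides. Because $t_2\leq t_1-1$ we have $\mathcal{W}(t_2)\subseteq \mathcal{W}(t_1-1)$, so the tower property of conditional expectation applies and gives $\mathbb{E}[\sum_k Q_k(t_1) g_k^{t_1}(\hat{\mathbf{x}})\mid \mathcal{W}(t_2)] \leq -\epsilon\, \mathbb{E}[\Vert\mathbf{Q}(t_1)\Vert \mid \mathcal{W}(t_2)]$, which is the claim. I do not anticipate a genuine obstacle here; the only point requiring care is the bookkeeping of which random variables are measurable with respect to which $\sigma$-algebra (so that the conditional expectation factors correctly), together with the observation that the $\ell_1$–$\ell_2$ norm comparison goes in the favorable direction precisely because the queues are nonnegative.
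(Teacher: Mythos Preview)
Your proposal is correct and follows essentially the same route as the paper: condition on $\mathcal{W}(t_1-1)$ to factor out the $\mathcal{W}(t_1-1)$-measurable queues $Q_k(t_1)$ and replace $g_k^{t_1}(\hat{\mathbf{x}})$ by its mean $\tilde g_k(\hat{\mathbf{x}})\leq -\epsilon$, use nonnegativity of the queues and the $\ell_1$--$\ell_2$ inequality $\sum_k Q_k\geq\Vert\mathbf{Q}\Vert$, and then pass to $\mathcal{W}(t_2)$ via the tower property. The only cosmetic difference is that the paper carries out the tower-property step componentwise before summing over $k$, whereas you sum first and then apply the tower property; the content is identical.
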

\begin{proof} To prove this lemma, we first show that $\mathbb{E}[Q_{k}(t_{1}) g_{k}^{t_{1}}(\hat{\mathbf{x}}) | \mathcal{W}(t_{2})] \leq -\epsilon \mathbb{E}[Q_{k}(t_{1}) | \mathcal{W}(t_{2})], \forall k\in\{1,2,\ldots, m\}, \forall t_{2} \leq t_{1} -1$. Fix $k\in\{1,2,\ldots,m\}$. Note that $\mathbf{Q}(t_1) \in \mathcal{W}(t_1-1)$ and $g_{k}^{t_{1}} (\hat{\mathbf{x}})$ is independent of $\mathcal{W}(t_1-1) $. Further,  if $t_{2} \leq t_{1}-1$, then $ \mathcal{W}(t_2) \subseteq \mathcal{W}(t_1-1)$.  Thus, we have 
\begin{align*}
\mathbb{E}[Q_{k}(t_{1}) g_{k}^{t_{1}}(\hat{\mathbf{x}}) | \mathcal{W}(t_{2})]
\overset{(a)}{=}&\mathbb{E}\big[ \mathbb{E}[Q_{k}(t_{1}) g_{k}^{t_{1}}(\hat{\mathbf{x}}) | \mathcal{W}(t_1 -1)] | \mathcal{W}(t_{2})\big]\\
\overset{(b)}{=}&\mathbb{E}\big[ Q_{k}(t_{1}) \mathbb{E}[ g_{k}^{t_{1}}(\hat{\mathbf{x}})] | \mathcal{W}(t_{2})\big]\\
\overset{(c)}{=} &\mathbb{E}[g_{k}^{t_{1}}(\hat{\mathbf{x}})] \mathbb{E}[Q_{k}(t_{1})| \mathcal{W}(t_{2})]\\
\overset{(d)}{\leq}& -\epsilon \mathbb{E}[Q_{k}(t_{1})| \mathcal{W}(t_{2})]
\end{align*}
where (a) follows from iterated expectations; (b) follows because $g_{k}^{t_{1}} (\hat{\mathbf{x}})$ is independent of $\mathcal{W}(t_1-1) $ and $Q_k(t_1)\in \mathcal{W}(t_1 -1)$; (c) follows by extracting the constant $\mathbb{E}[g_{k}^{t_{1}}(\hat{\mathbf{x}})]$  and (d) follows from the assumption that $\hat{\mathbf{x}}$ is a Slater point,  $g^{t}(\cdot)$ are i.i.d. across $t$ and the fact that $Q_{k}(t)\geq 0$.

Now, summing over $m\in\{1,2,\ldots, m\}$ yields
\begin{align*}
\mathbb{E}[\sum_{k=1}^m Q_k (t_{1}) g^{t_{1}}_{k}(\hat{\mathbf{x}}) | \mathcal{W}(t_{2})]  \leq&  -\epsilon  \mathbb{E}[\sum_{k=1}^{m}Q_{k}(t_{1})| \mathcal{W}(t_{2})]\\
\overset{(a)}{\leq}& -\epsilon \mathbb{E}[ \Vert \mathbf{Q}(t_{1}) \Vert | \mathcal{W}(t_{2})]
\end{align*}
where (a) follows from the basic fact that $\sum_{k=1}^{m} a_{k} \geq \sqrt{\sum_{k=1}^{m} a_{k}^{2}}$ when $a_{k}
\geq 0, \forall k\in\{1,2,\ldots,m\}$.
\end{proof}

The bounded difference of $\vert \mathbf{Q}(t+1) - \mathbf{Q}(t)\vert$ follows directly from the virtual queue update equation \eqref{eq:queue-update} and is summarized in the next Lemma.

\begin{Lem}\label{lm:simple-queue-diff}
Let $\mathbf{Q}(t), t\in\{0,1,\ldots\}$ be the sequence generated by Algorithm \ref{alg:new-alg}. Then,
$$\Vert \mathbf{Q}(t)\Vert - G - \sqrt{m}D_{2} R\leq \Vert \mathbf{Q}(t+1)\Vert \leq \Vert \mathbf{Q}(t)\Vert + G, \forall t\geq 0.$$

\end{Lem}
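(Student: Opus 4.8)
The plan is to work componentwise with the virtual queue recursion \eqref{eq:queue-update} and then pass to Euclidean norms. Abbreviate $b_k = g_k^t(\mathbf{x}(t)) + [\nabla g_k^t(\mathbf{x}(t))]\tran[\mathbf{x}(t+1) - \mathbf{x}(t)]$, so that \eqref{eq:queue-update} becomes $Q_k(t+1) = \max\{Q_k(t) + b_k, 0\}$, and let $\mathbf{b} = [b_1, \ldots, b_m]\tran = \mathbf{g}^t(\mathbf{x}(t)) + \mathbf{s}$, where $\mathbf{s}$ is the vector of gradient-correction terms from the proof of Lemma \ref{lm:drift} (so $\mathbf{b}$ is exactly the vector $\mathbf{h}$ appearing there). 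Two elementary facts about the map $a \mapsto \max\{a, 0\}$ drive the argument: it is nonexpansive, and since $Q_k(t) \geq 0$ one has the one-sided bound $\max\{Q_k(t) + b_k, 0\} \leq Q_k(t) + \max\{b_k, 0\}$.

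For the lower bound I would use nonexpansiveness together with $Q_k(t) = \max\{Q_k(t), 0\}$ to get $|Q_k(t+1) - Q_k(t)| \leq |b_k|$ for each $k$, hence $\Vert \mathbf{Q}(t+1) - \mathbf{Q}(t)\Vert \leq \Vert \mathbf{b}\Vert \leq G + \sqrt{m}D_2 R$ by the estimate \eqref{eq:pf-lm-drift-eq2}. The triangle inequality in the form $\Vert \mathbf{Q}(t)\Vert \leq \Vert \mathbf{Q}(t+1)\Vert + \Vert \mathbf{Q}(t+1) - \mathbf{Q}(t)\Vert$ then yields $\Vert \mathbf{Q}(t)\Vert - G - \sqrt{m}D_2 R \leq \Vert \mathbf{Q}(t+1)\Vert$.

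For the tighter upper bound the key is to use convexity of $g_k^t$ instead of crudely bounding $\Vert \mathbf{b}\Vert$: the subgradient inequality gives $b_k \leq g_k^t(\mathbf{x}(t+1))$, so $\max\{b_k, 0\} \leq |g_k^t(\mathbf{x}(t+1))|$. Writing $\mathbf{v}$ for the vector with entries $v_k = \max\{b_k, 0\}$, this gives $\Vert \mathbf{v}\Vert \leq \Vert \mathbf{g}^t(\mathbf{x}(t+1))\Vert \leq G$ by Assumption \ref{as:basic} (note $\mathbf{x}(t+1)\in\mathcal{X}_0$). Combining with the one-sided bound, $0 \leq Q_k(t+1) \leq Q_k(t) + v_k$ for all $k$; squaring and summing (all terms nonnegative) gives $\Vert \mathbf{Q}(t+1)\Vert \leq \Vert \mathbf{Q}(t) + \mathbf{v}\Vert \leq \Vert \mathbf{Q}(t)\Vert + \Vert \mathbf{v}\Vert \leq \Vert \mathbf{Q}(t)\Vert + G$.

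The only real subtlety, and the reason the two bounds are not symmetric, is that convexity supplies only the upper estimate $b_k \leq g_k^t(\mathbf{x}(t+1))$: this is what allows the $\sqrt{m}D_2 R$ term to be absorbed on the upper side, while on the lower side no such cancellation is available and the correction term $\Vert \mathbf{s}\Vert \leq \sqrt{m}D_2 R$ must be retained. Obtaining the stated $+G$ (rather than $+G+\sqrt{m}D_2R$) therefore hinges on two choices: using the componentwise inequality $\max\{Q_k(t)+b_k,0\}\leq Q_k(t)+\max\{b_k,0\}$ rather than projecting and applying the triangle inequality on the full vectors, and routing the estimate through $g_k^t(\mathbf{x}(t+1))$ rather than through $g_k^t(\mathbf{x}(t))$ and the gradient-correction term separately.
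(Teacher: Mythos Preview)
Your proof is correct and follows essentially the same approach as the paper. For the lower bound both you and the paper use the nonexpansiveness of $a\mapsto\max\{a,0\}$ componentwise together with the estimate $\Vert\mathbf{b}\Vert\le G+\sqrt{m}D_2R$ and the triangle inequality; for the upper bound both exploit convexity via $b_k\le g_k^t(\mathbf{x}(t+1))$. The only cosmetic difference is that the paper bounds $Q_k(t+1)\le\max\{Q_k(t)+g_k^t(\mathbf{x}(t+1)),0\}$ and then uses the fact that $0\le a\le\max\{b,0\}$ implies $a^2\le b^2$ to pass directly to $\Vert\mathbf{Q}(t+1)\Vert\le\Vert\mathbf{Q}(t)+\mathbf{g}^t(\mathbf{x}(t+1))\Vert$, whereas you route through the auxiliary nonnegative vector $\mathbf{v}$ with entries $\max\{b_k,0\}$; the two arguments are equivalent in spirit and length.
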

\begin{proof}~

\begin{itemize}[leftmargin=10pt]
\item {\bf Proof of $\Vert \mathbf{Q}(t+1)\Vert \leq \Vert \mathbf{Q}(t)\Vert + G$}:

Fix $t\geq 0$ and $k\in\{1,2,\ldots,m\}$. The virtual queue update equation implies that 
\begin{align*}
Q_{k}(t+1) =& \max\{ Q_{k}(t) + g_{k}^{t}(\mathbf{x}(t)) + [\nabla g_{k}^{t}(\mathbf{x}(t))]\tran [\mathbf{x}(t+1) - \mathbf{x}(t)], 0\} \\
\overset{(a)}{\leq}&  \max\{ Q_{k}(t) + g_{k}^{t}(\mathbf{x}(t+1)), 0\},
\end{align*}
where (a) follows from the convexity of $g_{k}^{t}(\cdot)$. 

Note that $Q_{k}(t+1)\geq 0$ and recall the fact that if $0\leq a \leq \max\{b,0\}$, then $a^{2} \leq b^{2}$ for all $a,b\in \mathbb{R}$. Then, we have $[Q_{k}(t+1)]^{2} \leq [Q_{k}(t) + g_{k}^{t}(\mathbf{x}(t+1))]^{2}$. 

Summing over $k\in \{1,2,\ldots, m\}$ yields
\begin{align*}
\Vert \mathbf{Q}(t+1)\Vert^{2} \leq \Vert \mathbf{Q}(t) + \mathbf{g}^{t}(\mathbf{x}(t+1))\Vert^{2}.
\end{align*}
Thus, $\Vert \mathbf{Q}(t+1)\Vert \leq \Vert \mathbf{Q}(t) + \mathbf{g}^{t}(\mathbf{x}(t+1))\Vert \leq \Vert \mathbf{Q}(t)\Vert + \Vert \mathbf{g}^{t}(\mathbf{x}(t+1))\Vert \leq \Vert \mathbf{Q}(t)\Vert + G$ where the last inequality follows from Assumption \ref{as:basic}.

\item {\bf Proof of $\Vert \mathbf{Q}(t+1)\Vert \geq \Vert \mathbf{Q}(t)\Vert - G - \sqrt{m}D_{2} R$}:

Since $Q_k(t)\geq 0$, it follows that $|Q_k(t+1) - Q_k(t) | \leq |g_{k}^{t}(\mathbf{x}(t)) + [\nabla g_{k}^{t}(\mathbf{x}(t))]\tran [\mathbf{x}(t+1) - \mathbf{x}(t)] |$. (This can be shown by considering $g_{k}^{t}(\mathbf{x}(t)) + [\nabla g_{k}^{t}(\mathbf{x}(t))]\tran [\mathbf{x}(t+1) - \mathbf{x}(t)] \geq 0$ and $g_{k}^{t}(\mathbf{x}(t)) + [\nabla g_{k}^{t}(\mathbf{x}(t))]\tran [\mathbf{x}(t+1) - \mathbf{x}(t)] <0$ separately.)  Thus, we have $\Vert \mathbf{Q}(t+1) - \mathbf{Q}(t)\Vert \leq G+ \sqrt{m}D_2R$, which further implies $\Vert \mathbf{Q}(t+1)\Vert \geq \Vert \mathbf{Q}(t)\Vert - G - \sqrt{m}D_{2} R$ by the triangle inequality of norms.
\end{itemize}
\end{proof}
Now, we are ready to present the main proof of Lemma \ref{lm:queue-decrease-condition}. Note that Lemma \ref{lm:simple-queue-diff} gives $\big\vert \Vert \mathbf{Q}(t+1)\Vert - \Vert \mathbf{Q}(t)\Vert \big\vert \leq G+\sqrt{m} D_2 R$, which further implies that $\mathbb{E}[\Vert \mathbf{Q}(t+t_0)\Vert - \Vert \mathbf{Q}(t)\Vert |  \mathbf{Q}(t)] \leq t_0(G+\sqrt{m} D_2 R)$ when $\Vert \mathbf{Q}(t)\Vert < \theta$. It remains to prove $\mathbb{E}[\Vert \mathbf{Q}(t+1)\Vert - \Vert \mathbf{Q}(t)\Vert  \big|  \mathbf{Q}(t)]  \leq -\frac{\epsilon}{2}t_0$ when $\Vert \mathbf{Q}(t)\Vert \geq \theta$. Note that $\Vert \mathbf{Q}(0)\Vert = 0 <  \theta$.

Fix $t\geq 1$ and consider that $\Vert \mathbf{Q}(t)\Vert \geq \theta$. Let $\hat{\mathbf{x}}\in \mathcal{X}_{0}$ and $\epsilon>0$ be defined in Assumption \ref{as:interior-point}. Note that $\mathbb{E}[g^{t}_{k}(\hat{\mathbf{x}})] \leq -\epsilon, \forall k\in\{1,2,\ldots,m\}, \forall t\in\{1,2,\dots\}$ since $\omega(t)$ are i.i.d. from the distribution of $\omega$. Since $\hat{\mathbf{x}} \in \mathcal{X}_{0}$, by Lemma \ref{lm:decision-update-inequality}, for all $\tau\in \{t, t+1, \ldots,t+t_0-1\}$, we have
\begin{align*}
&V [\nabla f^{\tau}(\mathbf{x}(\tau))]\tran [\mathbf{x}(\tau+1) - \mathbf{x}(\tau)]+  \sum_{k=1}^{m} Q_{k}(\tau) [\nabla g^{\tau}_{k}(\mathbf{x}(\tau))]\tran [\mathbf{x}(\tau+1) - \mathbf{x}(\tau)] + \alpha \Vert \mathbf{x}(\tau+1) - \mathbf{x}(\tau)\Vert^{2} \\
\leq& V [\nabla f^{\tau}(\mathbf{x}(\tau))]\tran [\hat{\mathbf{x}} - \mathbf{x}(\tau)]+  \sum_{k=1}^{m} Q_{k}(\tau)[\nabla g^{\tau}_{k}(\mathbf{x}(\tau))]\tran [\mathbf{z} - \mathbf{x}(\tau)] + \alpha [\Vert \hat{\mathbf{x}}  - \mathbf{x}(\tau)\Vert^{2} - \Vert \hat{\mathbf{x}}  - \mathbf{x}(\tau+1)\Vert^{2}]. 
\end{align*}
Adding $\sum_{k=1}^{m} Q_k(\tau) g_k^{\tau}(\mathbf{x}(\tau))$ on both sides and noting that $g_k^{\tau}(\mathbf{x}(\tau)) + [\nabla g^{\tau}_{k}(\mathbf{x}(\tau))]\tran [\mathbf{z} - \mathbf{x}(\tau)] \leq g^{\tau}_k (\mathbf{z})$ by convexity yields
\begin{align*}
&V [\nabla f^{\tau}(\mathbf{x}(\tau))]\tran [\mathbf{x}(\tau+1) - \mathbf{x}(\tau)]+  \sum_{k=1}^{m} Q_{k}(\tau)\big[ g_k^{\tau}(\mathbf{x}(\tau)) + [\nabla g^{\tau}_{k}(\mathbf{x}(\tau))]\tran [\mathbf{x}(\tau+1) - \mathbf{x}(\tau)] \big]+ \alpha \Vert \mathbf{x}(\tau+1) - \mathbf{x}(\tau)\Vert^{2} \\
\leq& V [\nabla f^{\tau}(\mathbf{x}(\tau))]\tran [\hat{\mathbf{x}} - \mathbf{x}(\tau)]+  \sum_{k=1}^{m} Q_{k}(\tau) g_k^{\tau}(\mathbf{z}) + \alpha [\Vert \hat{\mathbf{x}}  - \mathbf{x}(\tau)\Vert^{2} - \Vert \hat{\mathbf{x}}  - \mathbf{x}(\tau+1)\Vert^{2}]. 
\end{align*}

Rearranging terms yields
\begin{align}
& \sum_{k=1}^{m} Q_{k}(t)\big[ g^{\tau}_{k}(\mathbf{x}(t)) + [\nabla g^{\tau}_{k}(\mathbf{x}(\tau))]\tran [\mathbf{x}(\tau+1) - \mathbf{x}(\tau)] \big] \nonumber\\
\leq &V [\nabla f^{\tau}(\mathbf{x}(\tau))]\tran [\hat{\mathbf{x}} - \mathbf{x}(\tau)] - V [\nabla f^{\tau}(\mathbf{x}(\tau))]\tran [\mathbf{x}(\tau+1) - \mathbf{x}(\tau)] + \alpha [\Vert \hat{\mathbf{x}}  - \mathbf{x}(\tau)\Vert^{2} - \Vert \hat{\mathbf{x}}  - \mathbf{x}(\tau+1)\Vert^{2}] \nonumber\\ & - \alpha \Vert \mathbf{x}(\tau+1) - \mathbf{x}(\tau)\Vert^{2} +  \sum_{k=1}^{m} Q_{k}(t)g_{k}^{\tau}(\hat{\mathbf{x}} )\nonumber\\
\leq & V [\nabla f^{\tau}(\mathbf{x}(\tau))]\tran [\hat{\mathbf{x}} - \mathbf{x}(\tau+1)]  +\alpha [\Vert \hat{\mathbf{x}}  - \mathbf{x}(\tau)\Vert^{2} - \Vert \hat{\mathbf{x}}  - \mathbf{x}(\tau+1)\Vert^{2}] +  \sum_{k=1}^{m} Q_{k}(\tau)g_{k}^{\tau}(\hat{\mathbf{x}} ) \nonumber\\
\overset{(a)}{\leq}&  V \Vert \nabla f^{\tau}(\mathbf{x}(\tau))\Vert \Vert \hat{\mathbf{x}} - \mathbf{x}(\tau+1)\Vert  +\alpha [\Vert \hat{\mathbf{x}}  - \mathbf{x}(\tau)\Vert^{2} - \Vert \hat{\mathbf{x}}  - \mathbf{x}(\tau+1)\Vert^{2}] +\sum_{k=1}^{m} Q_{k}(\tau)g_{k}^{\tau}(\hat{\mathbf{x}} ) \nonumber\\
\overset{(b)}{\leq} &VD_{1} R + \alpha [\Vert \hat{\mathbf{x}}  - \mathbf{x}(\tau)\Vert^{2} - \Vert \hat{\mathbf{x}}  - \mathbf{x}(\tau+1)\Vert^{2}]  +  \sum_{k=1}^{m} Q_{k}(\tau)g_{k}^{\tau}(\hat{\mathbf{x}} ), \label{eq:pf-queue-bound-eq1}
\end{align}
where (a) follows from the Cauchy-Schwarz inequality and (b) follows from Assumption \ref{as:basic}.

By Lemma \ref{lm:drift}, all $\tau\in \{t, t+1, \ldots, t+t_0-1\}$, we have
\begin{align*}
\Delta(\tau) \leq&  \sum_{k=1}^{m} Q_{k}(\tau)\big[ g_{k}^{\tau}(\mathbf{x}(\tau)) + [\nabla g^{\tau}_{k}(\mathbf{x}(\tau))]\tran [\mathbf{x}(\tau+1) - \mathbf{x}(\tau)]\big] +  \frac{1}{2}(G+\sqrt{m}D_{2}R)^{2}\\
\overset{(a)}{\leq}&  VD_{1} R +  \frac{1}{2}(G+\sqrt{m}D_{2}R)^{2} + \alpha [\Vert \hat{\mathbf{x}}  - \mathbf{x}(\tau)\Vert^{2} - \Vert \hat{\mathbf{x}}  - \mathbf{x}(\tau+1)\Vert^{2}]  +  \sum_{k=1}^{m} Q_{k}(\tau)g_{k}^{\tau}(\hat{\mathbf{x}} ),
\end{align*}
where (a) follows from \eqref{eq:pf-queue-bound-eq1}. 

Summing the above inequality over $\tau\in\{t, t+1, \ldots, t+t_0-1\}$, taking expectations conditional on $\mathcal{W}(t-1)$ on both sides and recalling that $\Delta(\tau) = \frac{1}{2} \Vert \mathbf{Q}(\tau+1)\Vert^{2} - \frac{1}{2} \Vert \mathbf{Q}(\tau)\Vert^{2}$ yields
\begin{align*}
&\mathbb{E}[ \Vert \mathbf{Q}(t+t_0)\Vert^{2} - \Vert \mathbf{Q}(t)\Vert^{2} \big|  \mathcal{W}(t-1)] \\
\leq & 2VD_{1} R t_0  +  t_0(G+\sqrt{m}D_{2}R)^{2} + 2\alpha \mathbb{E}[\Vert \hat{\mathbf{x}}  - \mathbf{x}(t)\Vert^{2} - \Vert \hat{\mathbf{x}}  - \mathbf{x}(t+t_0)\Vert^{2}| \mathcal{W}(t-1) ]  \\ &+ 2\sum_{\tau=t}^{t+t_0-1}\mathbb{E} [\sum_{k=1}^{m} Q_{k}(\tau)g_{k}^{\tau}(\hat{\mathbf{x}} ) | \mathcal{W}(t-1)] \\
\overset{(a)}{\leq} & 2VD_{1} R t_0  +  t_0(G+\sqrt{m}D_{2}R)^{2} + 2\alpha R^2 - 2\epsilon \sum_{\tau=t}^{t+t_0-1} \mathbb{E}[\Vert \mathbf{Q}(\tau) \Vert | \mathcal{W}(t-1)]\\ 
\overset{(b)}{\leq} &2VD_{1} R t_0  + t_0 (G+\sqrt{m}D_{2}R)^{2} + 2\alpha R^2 - 2\epsilon \sum_{\tau=0}^{t_0-1} \mathbb{E}[\Vert \mathbf{Q}(t) \Vert - \tau (G+\sqrt{m}D_2R)| \mathcal{W}(t-1)]\\
=& 2VD_{1} R t_0  +  t_0(G+\sqrt{m}D_{2}R)^{2} + 2\alpha R^2 - 2\epsilon t_0 \Vert \mathbf{Q}(t)\Vert + \epsilon t_0(t_0-1)(G+\sqrt{m}D_2R)\\
\leq& 2VD_{1} R t_0  +  t_0(G+\sqrt{m}D_{2}R)^{2} + 2\alpha R^2 - 2\epsilon t_0 \Vert \mathbf{Q}(t)\Vert + \epsilon t_0^2(G+\sqrt{m}D_2R)
\end{align*}
where (a) follows from $\Vert \hat{\mathbf{x}}  - \mathbf{x}(t)\Vert^{2} - \Vert \hat{\mathbf{x}}  - \mathbf{x}(t+t_0)\Vert^{2} \leq R^2$ by Assumption \ref{as:basic} and $\mathbb{E} [\sum_{k=1}^{m} Q_{k}(\tau)g_{k}^{\tau}(\hat{\mathbf{x}} ) | \mathcal{W}(t-1)] \leq -\epsilon  \mathbb{E}[\Vert \mathbf{Q}(\tau) \Vert | \mathcal{W}(t-1)], \forall \tau\in\{t, t+1, \ldots, t+t_0 -1\}$ by Lemma \ref{lm:Slater-negative}; and (b) follows from $\Vert \mathbf{Q}(t+1)\Vert \geq \Vert \mathbf{Q}(t)\Vert - (G+\sqrt{m}D_2R), \forall t$ by Lemma \ref{lm:simple-queue-diff}.

This inequality can be rewritten as  
\begin{align*}
&\mathbb{E}[ \Vert \mathbf{Q}(t+t_0)\Vert^{2} \big|  \mathcal{W}(t-1)] \\
\leq & \Vert \mathbf{Q}(t)\Vert^{2} - 2 \epsilon t_0\Vert \mathbf{Q}(t)\Vert + 2VD_{1} R t_0 + 2\alpha R^{2} +  t_0(G+\sqrt{m}D_{2}R)^{2} + \epsilon t_0^2 (G+\sqrt{m}D_2 R) \\
\overset{(a)}{\leq} & \Vert \mathbf{Q}(t)\Vert^{2} - \epsilon t_0\Vert \mathbf{Q}(t)\Vert  -\epsilon t_0[\frac{\epsilon}{2}t_0+ (G+\sqrt{m}D_2R)t_0 + \frac{2\alpha R^2}{t_0 \epsilon}+ \frac{2VD_{1} R +  (G+\sqrt{m}D_{2}R)^{2}}{\epsilon}] \\ &+ 2VD_{1} R t_0 + 2\alpha R^{2} + t_0(G+\sqrt{m}D_{2}R)^{2} + \epsilon t_0^2 (G+\sqrt{m}D_2 R)\\
=& \Vert \mathbf{Q}(t)\Vert^{2} - \epsilon t_0\Vert \mathbf{Q}(t)\Vert  - \frac{\epsilon^{2}t_0^2}{2}\\
\leq& [\Vert \mathbf{Q}(t)\Vert - \frac{\epsilon}{2}t_0]^{2},
\end{align*}
where (a) follows from the hypothesis that $\Vert \mathbf{Q}(t)\Vert \geq  \theta = \frac{\epsilon}{2}t_0+ (G+\sqrt{m}D_2R)t_0 + \frac{2\alpha R^2}{t_0 \epsilon}+ \frac{2VD_{1} R +  (G+\sqrt{m}D_{2}R)^{2}}{\epsilon}$.

Taking square root on both sides yields 
\begin{align*}
\sqrt{\mathbb{E}[ \Vert \mathbf{Q}(t+t_0)\Vert^{2} \big|  \mathcal{W}(t-1)]} \leq \Vert \mathbf{Q}(t)\Vert - \frac{\epsilon}{2}t_0.
\end{align*}

By the concavity of function $\sqrt{x}$ and Jensen's inequality, we have $$\mathbb{E}[ \Vert \mathbf{Q}(t+t_0)\Vert \big|  \mathcal{W}(t-1)] \leq \sqrt{\mathbb{E}[ \Vert \mathbf{Q}(t+t_0)\Vert^{2} | \mathcal{W}(t-1)]} \leq \Vert \mathbf{Q}(t)\Vert - \frac{\epsilon}{2}t_0.$$

\section{Proof of Lemma \ref{lm:extended-Azuma-inequality}} \label{app:pf-lm-extended-Azuma-inequality}

Intuitively, the second term on the right side in the lemma bounds the probability that $|Z(\tau+1)-Z(\tau)| >c$ for any $\tau\in\{0,1,\ldots, t\}$, while the first term on the right side comes from the conventional Hoeffding-Azuma inequality. However, it is unclear that whether $Z(t)$ is still a supermartigale conditional on the event that $|Z(\tau+1)-Z(\tau)| \leq c$ for any $\tau\in\{0,1,\ldots, t-1\}$.That's why it is important to have $\{ \vert Z(t+1) - Z(t) \vert > c\}\subseteq \{Y(t) > 0\}$ and $Y(t)\in \mathcal{F}(t)$, which means the boundedness of $\vert Z(t+1) - Z(t) \vert$ can be inferred from another random variable $Y(t)$ that belongs to $\mathcal{F}(t)$. The proof of Lemma \ref{lm:extended-Azuma-inequality} uses the truncation method to construct an auxiliary supermargingale.

Recall the definition of stoping time given as follows:
\begin{Def}[\cite{book_DurrettProbabilityTE}]
Let $\{\emptyset, \Omega\}=\mathcal{F}(0)\subseteq\mathcal{F}(1)\subseteq\mathcal{F}(2)\cdots$ be a filtration. A discrete random variable $T$ is a stoping time (also known as an option time) if for any integer $t<\infty$,
\[\{T=t\}\in\mathcal{F}(t),\]
i.e. the event that the stopping time occurs at time $t$ is contained in the information up to time $t$.
\end{Def}

The next theorem summarizes  that a supermartingale truncated at a stoping time is still a supermartingale.
 
\begin{Thm}\label{thm:stopping-time-martingale}
(\textit{Theorem 5.2.6 in \cite{book_DurrettProbabilityTE}}) If random variable $T$ is a stopping time and $Z(t)$ is a supermartingale, then $Z(t\wedge T)$ is also a supermartingale, where $a\wedge b\triangleq\min\{a,b\}$.
\end{Thm}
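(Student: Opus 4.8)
The plan is to deduce the statement from the one-step supermartingale inequality $\mathbb{E}[Z(t+1)\mid\mathcal{F}(t)]\le Z(t)$ by means of the elementary fact that the increments of a stopped process are switched off once the stopping time has occurred. Set $W(t)=Z(t\wedge T)$. First I would record two bookkeeping facts. Since $T$ is a stopping time, $\{T\le t\}=\bigcup_{s=0}^{t}\{T=s\}\in\mathcal{F}(t)$ by the filtration property, so its complement $\{T\ge t+1\}=\{T>t\}$ also lies in $\mathcal{F}(t)$. Moreover, from the pointwise representation
\[
W(t)=\sum_{s=0}^{t}\mathbf{1}\{T=s\}\,Z(s)+\mathbf{1}\{T>t\}\,Z(t),
\]
$W(t)$ is $\mathcal{F}(t)$-measurable (each $\{T=s\}$ with $s\le t$ and each $Z(s)$ with $s\le t$ belong to $\mathcal{F}(t)$, and $\{T>t\}\in\mathcal{F}(t)$), and $|W(t)|\le\sum_{s=0}^{t}|Z(s)|\in L^1$ since a supermartingale is integrable at each time; note $t\wedge T\le t$ is finite even when $T=+\infty$.

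The key step is the increment identity: for every $t\ge 0$,
\[
W(t+1)-W(t)=Z\big((t+1)\wedge T\big)-Z\big(t\wedge T\big)=\mathbf{1}\{T\ge t+1\}\,\big(Z(t+1)-Z(t)\big),
\]
because on $\{T\le t\}$ both $(t+1)\wedge T$ and $t\wedge T$ equal $T$ (zero increment), while on $\{T\ge t+1\}$ they equal $t+1$ and $t$ respectively. Since $\mathbf{1}\{T\ge t+1\}$ is $\mathcal{F}(t)$-measurable and nonnegative, I would pull it out of the conditional expectation and apply the supermartingale hypothesis:
\[
\mathbb{E}[W(t+1)\mid\mathcal{F}(t)]-W(t)=\mathbf{1}\{T\ge t+1\}\,\mathbb{E}[Z(t+1)-Z(t)\mid\mathcal{F}(t)]\le 0 .
\]
Combined with the adaptedness and integrability of $W$, this is precisely the supermartingale property of $W(t)=Z(t\wedge T)$.

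There is no genuine analytic obstacle here — this is the classical ``a stopped supermartingale is a supermartingale'' statement (optional stopping) — so the only care required is bookkeeping: verifying $\{T>t\}\in\mathcal{F}(t)$ from the definition of a stopping time (so the indicator may be treated as known given $\mathcal{F}(t)$) and confirming that $W(t)$ is adapted and integrable when $T$ may take the value $+\infty$. That is why the paper can invoke it as Theorem 5.2.6 of Durrett rather than reproving it.
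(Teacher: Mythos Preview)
Your argument is correct and is the standard textbook proof of the optional stopping lemma for supermartingales: the increment identity $W(t+1)-W(t)=\mathbf{1}\{T\ge t+1\}(Z(t+1)-Z(t))$, pulling out the $\mathcal{F}(t)$-measurable indicator, and applying the supermartingale one-step inequality. The adaptedness and integrability checks are handled properly.

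The paper, however, does not prove this statement at all; it simply records it as Theorem~5.2.6 of Durrett and uses it as a black box in the proof of Lemma~\ref{lm:extended-Azuma-inequality}. So there is no ``paper's own proof'' to compare against. Your closing remark already anticipates this: the result is invoked rather than reproven precisely because it is classical, and your write-up supplies exactly the short verification one would give if asked to unpack the citation.
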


To prove this lemma, we first construct a new supermartingale by truncating the original supermartingale at a carefully chosen stopping time such that the new supermartingale has bounded differences.

Define integer random variable $T = \inf\{t\geq 0: Y(t) > 0 \} $.  That is, $T$ is the first time $t$ when $Y(t) > 0$ happens. Now, we show that $T$ is a stoping time and if we define $\widetilde{Z}(t) = Z(t\wedge T)$, then $\{\widetilde{Z}(t) \neq Z(t)\} \subseteq \bigcup_{\tau=0}^{t-1} \{ Y(\tau) > 0\}, \forall t\geq 1$ and $\widetilde{Z}(t)$ is a supermartingale with differences bounded by $c$ . 
\begin{enumerate}[leftmargin=20pt]
\item {\bf To show $T$ is a stoping time:}  Note that $\{T = 0\} = \{Y(0)>0\}\in \mathcal{F}(0)$. Fix integer $t^{\prime} >0$, we have
\begin{align*}
\{T = t^{\prime}\} =& \big\{ \inf \{t\geq0: Y(t)> 0\} = t^{\prime}\big\} \\
=& \big\{\cap_{\tau=0}^{t^{\prime}-1}\{ \vert Y(\tau)\leq 0\} \big\}\cap \{Y(t^{\prime}) > 0\} \\
\overset{(a)}{\in} & \mathcal{F}(t^{\prime})
\end{align*}
where (a) follows because $\{ Y(\tau) \leq 0\}  \in \mathcal{F}(\tau)  \subseteq \mathcal{F}(t^{\prime})$ for all $\tau\in\{0,1,\ldots,t^{\prime}-1\}$ and $\{ Y(t^{\prime}) > 0\} \in \mathcal{F}(t^{\prime})$. It follows that $T$ is a stoping time.
\item {\bf To show $\{\widetilde{Z}(t) \neq Z(t)\} \subseteq \bigcup_{\tau=0}^{t-1} \{Y(\tau) > 0\}, \forall t\geq 1$:}  Fix $t = t^{\prime}> 1$. Note that
\begin{align*}
\{\widetilde{Z}(t^{\prime}) \neq Z(t^{\prime})\} \overset{(a)}{\subseteq}& \{ T <t^{\prime}\} = \big\{\inf \{t>0:  Y(t) >0\} < t^{\prime}\big\}\\
\subseteq& \bigcup_{\tau=0}^{t^{\prime}-1} \{ Y(\tau)  >0\}
\end{align*}
where (a) follows by noting that if $T\geq t^{\prime}$ then $\widetilde{Z}(t^{\prime}) = Z(t^{\prime}\wedge T) = Z(t^{\prime})$.
\item {\bf To show $\widetilde{Z}(t)$ is a supermartingale with differences bounded by $c$:}  Since random variable $T$ is proven to be a stoping time,  $\widetilde{Z}(t) = Z(t\wedge T)$ is a supermartingale by Theorem \ref{thm:stopping-time-martingale}. It remains to show  $\vert \widetilde{Z}(t+1) - \widetilde{Z}(t)\vert \leq c, \forall t\geq 0$. Fix integer $t = t^{\prime}\geq0$. Note that 
\begin{align*}
\vert   \widetilde{Z}(t^{\prime}+1) - \widetilde{Z}(t^{\prime}) \vert   =& \vert   Z(T\wedge (t^{\prime}+1)) -Z(T\wedge t^{\prime}) \vert  \\
=& \vert   \mathbf{1}_{\{T\geq t^{\prime}+1\}}[Z(T\wedge (t^{\prime}+1)) -Z(T\wedge t^{\prime})] + \mathbf{1}_{\{T\leq t^{\prime}\}}[Z(T\wedge (t^{\prime}+1)) -Z(T\wedge t^{\prime})] \vert   \\
=&  \vert   \mathbf{1}_{\{T\geq t^{\prime}+1\}}[Z(t^{\prime}+1) -Z(t^{\prime})] + \mathbf{1}_{\{T\leq t^{\prime}\}}[Z(T) -Z(T)] \vert   \\
=& \mathbf{1}_{\{T\geq t^{\prime}+1\}}\vert  Z(t^{\prime}+1) -Z(t^{\prime})\vert
\end{align*}
Now consider $T\leq t^{\prime}$ and $T\geq t^{\prime}+1$ separately. 
\begin{itemize}
\item In the case when $T \leq t^{\prime}$, it is straightforward that $\vert \widetilde{Z}(t^{\prime}+1) - \widetilde{Z}(t^{\prime}) \vert= \mathbf{1}_{\{T\geq t^{\prime}+1\}}\vert  Z(t^{\prime}+1) -Z(t^{\prime})\vert =0 \leq c$. 
\item Consider the case when $T\geq t^{\prime}+1$. By the definition of $T$, we know that $\{T\geq t^{\prime}+1\} =\big\{ \inf \{t\geq0: Y(t)> 0\} \geq t^{\prime} +1\big\} \subseteq \bigcap_{\tau=0}^{t^{\prime}} \{ Y(\tau) \leq 0\} \subseteq \bigcap_{\tau=0}^{t^{\prime}} \{ \vert Z(\tau+1) - Z(\tau) \vert \leq c\}$, where the last inclusion follows from the fact that $\{\vert Z(\tau+1) - Z(\tau) \vert > c\} \subseteq \{Y(\tau) >0\}$. That is, when $T\geq t^{\prime}+1$, we must have $|Z(\tau+1) - Z(\tau)| \leq c$ for all $\tau\in\{1,\ldots, t^{\prime}\}$, which further implies that $\vert Z(t^{\prime}+1) -Z(t^{\prime})\vert\leq c$. Thus,  when $T\geq t^{\prime}+1$, $\vert \widetilde{Z}(t^{\prime}+1) - \widetilde{Z}(t^{\prime}) \vert = \mathbf{1}_{\{T\geq t^{\prime}+1\}}\vert Z(t^{\prime}+1) -Z(t^{\prime})\vert \leq c$. 
\end{itemize}
Combining two cases together proves $\vert \widetilde{Z}(t^{\prime}+1) - \widetilde{Z}(t^{\prime}) \vert \leq c$. 
\end{enumerate}

Since $\widetilde{Z}(t)$ is a supermartingale with bounded differences $c$ and $\widetilde{Z}(0) = Z(0) = 0$, by the conventional Hoeffding-Azuma inequality, for any $z>0$, we have
\begin{align}
\text{Pr}(\widetilde{Z}(t)\geq z) \leq e^{-z^2/(2tc^2)} \label{eq:Z(t)-azuma}
\end{align}

Finally, we have
\begin{align*}
\text{Pr}(Z(t) \geq z) = & \text{Pr}(\widetilde{Z}(t) = Z(t), Z(t) \geq z) + \text{Pr}(\widetilde{Z}(t)\neq Z(t), Z(t) \geq z)\\
\leq & \text{Pr}(\widetilde{Z}(t) \geq z) +   \text{Pr}(\widetilde{Z}(t)\neq Z(t))\\
\overset{(a)}{\leq}& e^{-z^2/(2tc^2)} + \text{Pr}(\bigcup_{\tau=0}^{t-1} Y(\tau) > 0)\\
\overset{(b)}{\leq}&e^{-z^2/(2tc^2)} + \sum_{\tau=0}^{t-1} p(\tau)
\end{align*}
where (a) follows from equation \eqref{eq:Z(t)-azuma} and the second bullet in the above; and (b) follows from the union bound and the hypothesis that $\text{Pr}(Y(\tau)>0 )\leq p(\tau), \forall \tau$.

\section{Proof of Theorem \ref{thm:high-prob-regret-bound}} \label{app:thm-high-prob-regret-bound}

Define $Z(t) = \sum_{\tau=1}^{t} \sum_{k=1}^m Q_k(\tau) g^{\tau}_k(\mathbf{x^\ast})$. Recall $\mathcal{W}(t) = \sigma(\omega(1), \ldots, \omega(t))$. The next lemma shows that  $Z(t)$ satisfies Lemma \ref{lm:extended-Azuma-inequality} with $\mathcal{F}(t) = \mathcal{W}(t)$ and $Y(t) = \Vert \mathbf{Q}(t+1)\Vert - \frac{c}{G}$.
 
\begin{Lem}\label{lm:Qg-sum-concentration}
Let $\mathbf{x}^{\ast}\in \mathcal{X}_{0}$ be any fixed solution that satisfies $\tilde{\mathbf{g}}(\mathbf{x}^{\ast}) \leq \mathbf{0}$, e.g., $\mathbf{x}^{\ast} = \argmin_{\mathbf{x}\in \mathcal{X}} \sum_{t=1}^{T} f^{t}(\mathbf{x})$. Under Algorithm \ref{alg:new-alg}, if we define $Z(0)=0$ and $Z(t) = \sum_{\tau=1}^{t} \sum_{k=1}^m Q_k(\tau) g^{\tau}_k(\mathbf{x^\ast}), \forall t\geq 1$, then $\{Z(t), t\geq 0\}$ is a supermartingale adapted to filtration $\{\mathcal{W}(t), t\geq 0\}$ such that $\{| Z(t+1) - Z(t)| > c\} \subseteq \{Y(t) >0\}, \forall t\geq 0$ where $Y(t) = \Vert \mathbf{Q}(t+1)\Vert - \frac{c}{G}$ is a random variable adapted to  $\mathcal{W}(t)$.
\end{Lem}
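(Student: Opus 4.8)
The plan is to verify directly the three things the statement bundles together: that $\{Z(t)\}$ is adapted to $\{\mathcal{W}(t)\}$ (and integrable), that it satisfies the supermartingale inequality $\mathbb{E}[Z(t+1)\mid\mathcal{W}(t)]\le Z(t)$, and that $\{|Z(t+1)-Z(t)|>c\}\subseteq\{Y(t)>0\}$ with $Y(t)\in\mathcal{W}(t)$. The single observation that drives all three is the one already recorded before Lemma~\ref{lm:queue-independence}: $\mathbf{Q}(t+1)$ is a deterministic function of $\omega(1),\dots,\omega(t)$, so $\sigma(\mathbf{Q}(t+1))\subseteq\mathcal{W}(t)$, whereas $g^{t+1}_k(\mathbf{x}^\ast)=g_k(\mathbf{x}^\ast;\omega(t+1))$ is independent of $\mathcal{W}(t)$ with mean $\tilde g_k(\mathbf{x}^\ast)\le 0$.

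First I would dispatch adaptedness and integrability: for every $\tau\le t$ we have $Q_k(\tau)\in\mathcal{W}(\tau-1)\subseteq\mathcal{W}(t)$ and $g^\tau_k(\mathbf{x}^\ast)\in\mathcal{W}(\tau)\subseteq\mathcal{W}(t)$, so the finite double sum $Z(t)=\sum_{\tau=1}^t\sum_{k=1}^m Q_k(\tau)g^\tau_k(\mathbf{x}^\ast)$ is $\mathcal{W}(t)$-measurable, and Lemma~\ref{lm:simple-queue-diff} together with Assumption~\ref{as:basic} bounds $\Vert\mathbf{Q}(\tau)\Vert\le(\tau-1)G$ and $\Vert\mathbf{g}^\tau(\mathbf{x}^\ast)\Vert\le G$, so $|Z(t)|$ is dominated by a deterministic polynomial in $t$ and is integrable. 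Next, for the supermartingale property I would write $Z(t+1)-Z(t)=\sum_{k=1}^m Q_k(t+1)g^{t+1}_k(\mathbf{x}^\ast)$ and condition on $\mathcal{W}(t)$, using that $\mathbf{Q}(t+1)$ is $\mathcal{W}(t)$-measurable while $g^{t+1}_k(\mathbf{x}^\ast)$ is independent of it:
\[
\mathbb{E}[Z(t+1)-Z(t)\mid\mathcal{W}(t)]=\sum_{k=1}^m Q_k(t+1)\,\mathbb{E}[g^{t+1}_k(\mathbf{x}^\ast)]=\sum_{k=1}^m Q_k(t+1)\,\tilde g_k(\mathbf{x}^\ast)\le 0,
\]
since $Q_k(t+1)\ge 0$ and $\tilde g_k(\mathbf{x}^\ast)\le 0$; this is exactly the required inequality, and it is the same mechanism as in Lemma~\ref{lm:queue-independence}. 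Finally, for the inclusion I would apply Cauchy--Schwarz and Assumption~\ref{as:basic}:
\[
|Z(t+1)-Z(t)|=\Big|\sum_{k=1}^m Q_k(t+1)g^{t+1}_k(\mathbf{x}^\ast)\Big|\le\Vert\mathbf{Q}(t+1)\Vert\,\Vert\mathbf{g}^{t+1}(\mathbf{x}^\ast)\Vert\le G\,\Vert\mathbf{Q}(t+1)\Vert,
\]
so $|Z(t+1)-Z(t)|>c$ forces $\Vert\mathbf{Q}(t+1)\Vert>c/G$, i.e.\ $Y(t)>0$, and $Y(t)=\Vert\mathbf{Q}(t+1)\Vert-\tfrac{c}{G}$ lies in $\mathcal{W}(t)$ because $\mathbf{Q}(t+1)$ does.

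I do not expect a genuine obstacle here; the only point requiring care is the ``off-by-one'' measurability issue, namely that $\mathbf{Q}(t+1)$, despite its index, is already determined by $\omega(1),\dots,\omega(t)$ and hence by $\mathcal{W}(t)$. This is precisely what lets the conditional-expectation computation pull $Q_k(t+1)$ out of the expectation and what makes $Y(t)$ adapted to $\mathcal{W}(t)$ rather than $\mathcal{W}(t+1)$ — and it is exactly the structural hypothesis under which Lemma~\ref{lm:extended-Azuma-inequality} was set up to be applied (here with constant $c$ arbitrary and $p(t)=\mathrm{Pr}(\Vert\mathbf{Q}(t+1)\Vert>c/G)$).
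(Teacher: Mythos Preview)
Your proposal is correct and follows essentially the same route as the paper: write $Z(t+1)-Z(t)=\sum_{k=1}^m Q_k(t+1)g^{t+1}_k(\mathbf{x}^\ast)$, use $\mathbf{Q}(t+1)\in\mathcal{W}(t)$ together with independence of $\mathbf{g}^{t+1}(\mathbf{x}^\ast)$ to get the supermartingale inequality, and then apply Cauchy--Schwarz with $\Vert\mathbf{g}^{t+1}(\mathbf{x}^\ast)\Vert\le G$ to obtain the inclusion $\{|Z(t+1)-Z(t)|>c\}\subseteq\{\Vert\mathbf{Q}(t+1)\Vert>c/G\}$. The only (welcome) addition is that you explicitly verify integrability via the deterministic bound $\Vert\mathbf{Q}(\tau)\Vert\le(\tau-1)G$, which the paper leaves implicit.
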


\begin{proof}
It is easy to say $\{Z(t), t\geq 0\}$ is adapted $\{\mathcal{W}(t), t\geq0\}$. It remains to show $\{Z(t), t\geq 0\}$ is a supermartingale. Note that $Z(t+1) = Z(t) + \sum_{k=1}^m Q_k(t+1) g^{t+1}_k(\mathbf{x^\ast})$ and 
\begin{align*}
\mathbb{E}[Z(t+1)|\mathcal{W}(t)] =& \mathbb{E}[Z(t) + \sum_{k=1}^m Q_k(t+1) g^{t+1}_k(\mathbf{x^\ast}) | \mathcal{W}(t) ]\\
\overset{(a)}{=}&Z(t) + \sum_{k=1}^m Q_k(t+1)\mathbb{E}[g^{t+1}_k(\mathbf{x^\ast})] \\
\overset{(b)}{\leq} &Z(t)
\end{align*}
where (a) follows from the fact that $Z(t)\in \mathcal{W}(t)$, $\mathbf{Q}(t+1) \in \mathcal{W}(t)$ and $\mathbf{g}^{t+1}(\mathbf{x}^\ast)$ is independent of  $\mathcal{W}(t)$; and (b) follows from $\mathbb{E}[g^{t+1}_k(\mathbf{x^\ast})] = \tilde{g}_k(\mathbf{x}^{\ast}) \leq 0$ which further follows from $\omega(t)$ are i.i.d. samples. Thus,  $\{Z(t), t\geq 0\}$ is a supermartingale.

We further note that 
\begin{align*}
|Z(t+1) - Z(t)| = | \sum_{k=1}^m Q_k(t+1) g^{t+1}_k(\mathbf{x^\ast})| \overset{(a)}{\leq} \Vert \mathbf{Q}(t+1)\Vert G
\end{align*}
where (a) follows from the Cauchy-Schwarz inequality and  the assumption that $\Vert \mathbf{g}^t (\mathbf{x}^\ast)\Vert\leq G$. 

This implies that if $|Z(t+1) - Z(t)| > c$, then $\Vert \mathbf{Q}(t)\Vert > \frac{c}{G}$. Thus, $\{|Z(t+1) - Z(t)| > c\} \subseteq \{\Vert \mathbf{Q}(t+1)\Vert > \frac{c}{G}\}$. Since $\mathbf{Q}(t+1)$ is adapted to $\mathcal{W}(t)$, it follows that $Y(t) = \Vert \mathbf{Q}(t+1)\Vert - \frac{c}{G}$ is a random variable adapted to  $\mathcal{W}(t)$.
\end{proof}

By  Lemma \ref{lm:Qg-sum-concentration}, $Z(t)$ satisfies Lemma \ref{lm:extended-Azuma-inequality}. Fix $T\geq 1$, Lemma  \ref{lm:extended-Azuma-inequality} implies that   
\begin{align}
\text{Pr}(\sum_{t=1}^{T} \sum_{k=1}^m Q_k(t) g^{t}_k(\mathbf{x^\ast}) \geq \gamma) \leq \underbrace{e^{-\gamma^2/(2Tc^2)}}_{\text{(I)}} + \underbrace{\sum_{t=0}^{T-1} \text{Pr}(\Vert \mathbf{Q}(t+1) \Vert> \frac{c}{G})}_{\text{(II)}} \label{eq:pf-high-prob-constraint-eq1}
\end{align}

Fix $0<\lambda<1$. In the following, we shall choose $\gamma$ and $c$ such that both term (I) and term (II) in \eqref{eq:pf-high-prob-constraint-eq1} are no larger than $\frac{\lambda}{2}$.

Recall that by Lemma \ref{lm:queue-decrease-condition}, random process $\widetilde{Z}(t) = \Vert \mathbf{Q}(t)\Vert$ satisfies the conditions in Lemma \ref{lm:drift-random-process-bound}. To guarantee term (II) is no lareger than $\frac{\lambda}{2}$, it suffices to choose $c$ such that 
\begin{align*}
\text{Pr}(\Vert \mathbf{Q}(t)\Vert > \frac{c}{G}) < \frac{\lambda}{2T}, \forall t\in\{1,2,\ldots, T\}
\end{align*}
By part (2) of Lemma \ref{lm:drift-random-process-bound} (with $\mu = \frac{\lambda}{2T}$), the above inequality holds if we choose $c = t_0\frac{\epsilon}{2}G+ t_0 (G+\sqrt{m}D_2R)G+ \frac{2\alpha R^2}{t_0 \epsilon}G+ \frac{2VD_{1} R +  (G+\sqrt{m}D_{2}R)^{2}}{\epsilon}G+ t_0BG + t_0 \frac{8(G+\sqrt{m}D_{2}R)^{2}}{\epsilon} \log(\frac{2T}{\lambda})G$  where $$B = \frac{8(G+\sqrt{m}D_{2}R)^{2}}{\epsilon}\log[1+ \frac{32(G+\sqrt{m}D_{2}R)^{2}}{\epsilon^{2}} e^{\epsilon/[8(G+\sqrt{m}D_{2}R)]}]$$ is an absolute constant irrelevant the algorithm parameters and $t_0>0$ is an arbitrary integer.

Once $c$ is chosen, we further need to choose $\gamma$ such that term (I) in \eqref{eq:pf-high-prob-constraint-eq1} is $\frac{\lambda}{2}$. It follows that if $\gamma = \sqrt{T} \log^{0.5}(\frac{1}{\lambda}) c = \sqrt{T} \log^{0.5}(\frac{1}{\lambda}) [\frac{\epsilon}{2}t_0 G+ t_0(G+\sqrt{m}D_2R)G + \frac{2\alpha R^2}{t_0 \epsilon}G+ \frac{2VD_{1} R +  (G+\sqrt{m}D_{2}R)^{2}}{\epsilon} G+ t_0BG + t_0 \frac{8(G+\sqrt{m}D_{2}R)^{2}}{\epsilon}\log(\frac{2T}{\lambda})G]$, then 
\begin{align*}
\text{Pr}(\sum_{t=1}^{T} \sum_{k=1}^m Q_k(t) g^{t}_k(\mathbf{x^\ast}) \geq \gamma) \leq \lambda, 
\end{align*} 
or equivalently, 
\begin{align}
\text{Pr}(\sum_{t=1}^{T} \sum_{k=1}^m Q_k(t) g^{t}_k(\mathbf{x^\ast}) < \gamma) \geq 1-\lambda. \label{eq:pf-high-prob-constraint-eq2}
\end{align}

Note that if we take $t_0 = \lceil \sqrt{T}\rceil$, $V =\sqrt{T}$ and $\alpha = T$, then $\gamma = O\big(T\log(T)\log^{0.5}(\frac{1}{\lambda})\big) +O\big( T\log^{1.5}(\frac{1}{\lambda})\big) = O\big(T\log(T)\log^{1.5}(\frac{1}{\lambda})\big)$.

 By Lemma \ref{lm:deterministic-regret-bound}  (with $\mathbf{z} = \mathbf{x}^\ast$, $V =\sqrt{T}$ and $\alpha = T$), we have 
\begin{align}
\sum_{t=1}^{T}f^{t}(\mathbf{x}(t))
\leq &\sum_{t=1}^{T}f^{t}(\mathbf{x}^{\ast}) + \sqrt{T} R^{2} + \frac{D_{1}^{2}}{4} \sqrt{T} + \frac{1}{2}(G+\sqrt{m}D_{2}R)^{2} \sqrt{T} + \frac{1}{\sqrt{T}}\sum_{t=1}^{T} \big[\sum_{k=1}^{m} Q_{k}(t) g_{k}^{t}(\mathbf{x}^{\ast})\big] \label{eq:pf-high-prob-constraint-eq3}
\end{align}
Substituting \eqref{eq:pf-high-prob-constraint-eq2} into \eqref{eq:pf-high-prob-constraint-eq3} yields
\begin{align*}
\text{Pr}\Big( \sum_{t=1}^{T}f^{t}(\mathbf{x}(t)) \leq \sum_{t=1}^{T}f^{t}(\mathbf{x}^{\ast})  + O\big(T\log(T)\log^{1.5}(\frac{1}{\lambda})\big) \Big) \geq 1-\lambda.
\end{align*}

\section{More Experiment Details} \label{app:experiment}
In the experiment, we assume the job arrivals $\omega(t)$ are Poisson distributed with mean $1000$ jobs/slot. For simplicity, assume each server is restricted to choose power $x_{i}(t)\in[0,30]$ at each round and the service rate satisfies $h_i(x_i(t)) = 4\log(1+4x_i(t))$. (Note that our algorithm can easily deal with general concave functions $h_i(\cdot)$ and each server in general can have different $h_i(\cdot)$ functions.) The simulation duration is $2160$ slots (corresponding to $10$ days).
 
 The three baselines are further elaborated as below:
\begin{itemize}[leftmargin=20pt]
 \item Best fixed decision in hindsight: Assume all the electricity price traces and the job arrival distribution are known beforehand. The decision maker chooses a  fixed power decision vector $\mathbf{p}^\ast$ that is optimal based on data in $2160$ slots.
 \item React algorithm: This algorithm is developed in \cite{gandhi2012sleep}. The algorithm reacts to the current traffic and splits the load evenly among each server to support the arrivals.  Since instantaneous job arrivals is unknown at the current slot, we use the average of job arrivals over the most recent $5$ slots as an estimate.  Since this algorithm is designed to meet the time varying job arrivals but is unaware of electricity variations, its electricity cost is high as observed in our simulation results.
 \item Low-power algorithm: This algorithm is adapted from \cite{qureshi2009cutting} and always schedule jobs to servers in the zones with the lowest electricity price. Since instantaneous electricity prices are unknown at the current slot, we use the average of electricity prices over the most recent $5$ slots at each server as an estimate. Recall that each server has a finite service capacity ($x_i(t)\in[0,30]$), this algorithm is not guaranteed serve all job arrivals. Thus, the number of unserved jobs can eventually pile up.
\end{itemize}

\bibliographystyle{IEEEtran}
\bibliography{mybibfile}

\end{document}